\title{Blow-up estimates for a system of semilinear SPDEs\\ driven by mixed fractional Brownian motions}
\author{S. Sankar${}^{1}$, Manil T. Mohan${}^{2}$,  and S. Karthikeyan${}^{1,}\thanks{Corresponding author email: karthi@periyaruniversity.ac.in}$\\	\footnotesize{$^1$Department of Mathematics, Periyar University, Salem 636 011, India}\\ \footnotesize{$^2$Department of Mathematics, Indian Institute of Technology Roorkee, Roorkee 247 667, India}}
\date{}
\colorlet{darkblue}{blue!50!black}
\let\originalleft\left
\let\originalright\right
\renewcommand{\left}{\mathopen{}\mathclose\bgroup\originalleft}
\renewcommand{\right}{\aftergroup\egroup\originalright}
\begin{document}
	\maketitle \setcounter{page}{1} \numberwithin{equation}{section}
	\newtheorem{theorem}{Theorem}[section]
	\newtheorem{assumption}{Assumption}
	\newtheorem{lemma}{Lemma}[section]
	\newtheorem{Pro}{Proposition}[section]
	\newtheorem{Ass}{Assumption}[section]
	\newtheorem{Def}{Definition}[section]
	\newtheorem{Rem}{Remark}[section]
	\newtheorem{corollary}{Corollary}[section]
	\newtheorem{proposition}{Proposition}[section] 
	\newtheorem{app}{Appendix:}
	\newtheorem{ack}{Acknowledgement:}

   \begin{abstract}
   In this paper, we obtain the existence and finite-time blow-up for the solution to a system of semilinear stochastic partial differential equations driven by a combination of Brownian and fractional Brownian motions. Under suitable assumptions,  lower and upper bounds for the finite-time blow-up solution are obtained. We provide sufficient conditions for the existence of a global weak solution to the system. Further, a lower bound for the probability of the finite-time blow-up solution of the considered system is provided by using Malliavin calculus.\\
		
   \noindent {\it Keyword:  Semilinear SPDEs,  mixed fractional noise, blow-up times, stopping times, lower and upper bounds, Malliavin calculus.}\\
		
   \noindent {\it 2020 MSC Classification: 35R60, 60G22, 60H15, 74H35, 35B44, 60H07.}
   \end{abstract}

%	\linenumbers
	
	\section{Introduction}
    Fujita \cite{fuji1966, Fuji1968}, in this pioneering work, proved that the semilinear heat equation 
	\begin{equation*}
	\frac{\partial u(t,x)}{\partial t}=\Delta u(t,x)+u^{1+\alpha}(t,x), \ x\in D,
	\end{equation*}
    defined on a smooth bounded  domain $D\subset\mathbb{R}^d$, $d\geq 1$ with the Dirichlet boundary condition, where $\alpha>0$ is a constant, explodes in finite-time for all non-negative initial data $f(x)\in L^2(D)$ satisfying $$\int_D f(x)\psi(x)dx>\lambda^{\frac{1}{\alpha}},$$ $\lambda>0$ being the first eigenvalue of $-\Delta$ on $D$ and $\psi>0$, the corresponding normalized eigenfunction with $\|\psi\|_{L^1}=1$.

   The problem of existence of global positive solutions of semilinear stochastic partial differential equations (SPDEs) has been extensively investigated \cite{chow09, chow11}.  \textcolor{red}{ Dozzi and L\'{o}pez-Mimbela} \cite{doz2010} investigated the  blow-up problem for the stochastic analog of the above deterministic problem, namely
   \begin{equation}\label{d1}
   \left\{
   \begin{aligned}
   du(t,x)&=\left[\Delta u(t,x)+G(u(t,x)) \right]dt+\kappa u(t,x)dW(t),\ t>0,\\
   u(0,x)&=f(x) \geq 0, \ x\in D, \\
	u(t,x)&=0, \ t \geq 0, \ x \in \partial D,
	\end{aligned}
	\right.
	\end{equation}
	where $D\subset \mathbb R^d$ is a smooth bounded domain, $G:\mathbb{R} \rightarrow \mathbb{R}^{+}$ is locally Lipschitz and satisfies \begin{align}\label{1.2} G(z)\geq Cz^{1+\beta} \ \mbox{ for all } \ z>0,\end{align}  $C,\kappa$ and $\beta$ are given positive numbers and $W(\cdot)$, a standard one-dimensional Brownian motion defined on some probability space $\left( \Omega, \mathcal{F}, \mathbb{P} \right)$. By transforming the above equation to a  random partial differential equation, the bounds for the explosion time are estimated in terms of exponential functionals of $W(\cdot)$. For the case, $G(u)=u^{1+\beta},\ \beta>0$, the upper and lower bounds for the explosion times are estimated in \cite{car2013} by using the formula derived by Yor \cite{yor2001}.     

	The impact of Gaussian noises on the finite-time of blow-up of a nonlinear SPDE 
	$$du(t,x)=[\Delta  u(t,x)+G(u(t,x))]dt+k_1u(t,x)dW_{1}(t)+k_2u(t,x)dW_2(t), $$
	with Dirichlet boundary condition, where  $\{ W_{1}(t),W_{2}(t) \}_{t \geq 0} $ is a two-dimensional Brownian motion was investigated by Niu and Xie \cite{niu2012}.  Dozzi \textcolor{red}{et al.,} \cite{doz2013} extended the results to a system of semilinear SPDEs of the form:
	\begin{equation*}
	\left\{
	\begin{aligned}
	du_{1}(t,x)&=\left[ \left(\Delta+V_{1}\right)u_{1}(t,x)+u^{p}_{2}(t,x) \right]dt+k_{1}u_{1}(t,x)dW(t) ,\nonumber\\
	du_{2}(t,x)&=\left[ \left(\Delta+V_{2}\right)u_{2}(t,x)+u^{q}_{1}(t,x) \right]dt+k_{2}u_{2}(t,x)dW(t), \ x\in D,\ t>0,
	\end{aligned}
	\right.
	\end{equation*}
	with the Dirichlet boundary data $$u_i(0,x)= f_i(x)\geq 0,\ x\in D \mbox{ and } u_i(t,x)= 0,\ t\geq 0,\ x\in \partial D.$$ Here $p\geq q>1$ are constants, $D \subset \mathbb{R}^{d}$ is a bounded smooth domain, $V_{i}>0$ and $k_{i}\neq 0$ are constants, $i=1,2$.  In a recent work, Sankar et al.,  \cite{skm1} obtained  lower and upper bounds for the finite-time blow-up to a more general system of semilinear SPDEs.  

	\textcolor{red}{Dozzi et al.,  \cite{dozfrac2010} derived lower and upper bounds for the explosion time to the equation} 
	\begin{equation}\label{d}
%	\left\{
	\begin{aligned}
	du(t,x)&=\left[\Delta u(t,x)+\gamma u(t,x)+G(u(t,x)) \right]dt+ku(t,x)dB^{H}(t),\\
%	u(0,x)&=f(x) \geq 0, \ x\in D, \\
%	u(t,x)&=0, \ t \geq 0, \ x \in \partial D,
	\end{aligned}
%	\right.
	\end{equation}
	defined on a smooth bounded domain $D\subset \mathbb R^d$ with Dirichlet boundary data for real numbers $\gamma$ and $k$. Here $G:\mathbb{R} \rightarrow \mathbb{R}^{+}$ is locally Lipschitz and satisfies $G(z)\geq Cz^{1+\beta}, \ \mbox{for all} \ z>0$ and positive real numbers $\beta$ and $C$. %$f:D\rightarrow \mathbb{R}^{+}$ is of class $C^{2}$ and not identically zero. 
	Here  $\{B^{H}(t)\}_{t\geq 0}$ is a one-dimensional fractional Brownian motion  with Hurst index $H>\frac{1}{2}$. Further, sufficient conditions are established for blow-up in finite-time and for the existence of a global solution of the equation \eqref{d}.
 	Recently,  Dung \cite{dung} provided lower and upper bounds for the probability of the finite-time blow-up of solution of the equation \eqref{d} in terms of Hurst parameter $H,$ where $H \in (0,1)$. However, there  are  limited results available in the literature for the blow-up of solution of SPDEs driven by fractional Brownian motions. Very recently, Dozzi et al., \cite{doz2023} estimated the blow-up behaviour and the probability of finite-time blow-up  of  a semilinear SPDE driven by mixed Brownian motion and fractional Brownian motions.  Motivated by the above facts, we investigate the finite-time blow-up and probability characteristics of a system of semilinear SPDEs driven by mixture of Brownian motion and fractional Brownian motion (fBm).
 
 	%The main aim of this article is to obtain estimates for lower and upper bounds for the finite-time blow-up solution $u=(u_1,u_2)^{\top}$  of the system \eqref{b1}-\eqref{b2}. Further, we obtain the blow-up probabilistic of solution $u=(u_1,u_2)^{\top}$  to the system \eqref{b1}-\eqref{b2} by using Malliavin calculus techniques.
 
 	%The main aim of this article is to obtain estimates for lower and upper bounds for the finite-time blow-up solution $u=(u_1,u_2)^{\top}$ of the system \eqref{b1}. Further, we obtain the blow-up probabilistic of solution $u=(u_1,u_2)^{\top}$ to the system \eqref{b1}-\eqref{b2} by using Malliavin calculus techniques. Using %the Yor formula (cf.  \cite{Fenman1997, yor2005}) and 
 	%the explicit solution of an associated system of random PDEs, lower and upper bounds of explosion times are also obtained. 	
 	%Recall that $\lambda>0$ is the first eigenvalue of the Laplacian operator $-\Delta$ on $D$ and $\psi>0$ is the corresponding normalized  eigenfunction so that $\|\psi\|_{L^1}=1$.

	\section{Problem Formulation and Preliminaries}
	Let us consider a  system of semilinear SPDEs driven by mixture of Brownian motion and fractional Brownian motion (fBm) of the form:
	\begin{equation}\label{b1}
	\left\{
	\begin{aligned}
	du_{1}(t,x)&=\left[\Delta u_{1}(t,x)+\gamma_{1}u_{1}(t,x)+u^{1+\beta_{1}}_{2}(t,x) \right]dt \\
	&\qquad \ \ +k_{11}u_{1}(t,x)dW(t)+k_{12}u_{1}(t,x)dB^{H}(t), \\
	du_{2}(t,x)&=\left[\Delta u_{2}(t,x)+\gamma_{2}u_{2}(t,x)+u^{1+\beta_{2}}_{1}(t,x) \right]dt \\
	&\qquad \ \ +k_{21}u_{2}(t,x)dW(t)+k_{22}u_{2}(t,x)dB^{H}(t), \\
	\end{aligned}
	\right.
	\end{equation}
	for $x \in D,\ t>0$, along with the Dirichlet boundary conditions
	\begin{equation}\label{b2}
	\left\{
	\begin{array}{ll}
	u_{i}(0,x)=f_{i}(x),  &x \in D, \\
	u_{i}(t,x)=0, \ \ & x \in \partial D,\ t\geq 0,\  i=1,2,
	\end{array}
	\right.
	\end{equation}
	where $D \subset \mathbb{R}^{d}$ is a bounded domain with smooth boundary $\partial D$.	Here $\beta_{1}\geq \beta_{2}>0,\ \gamma_{i}>0$ and $k_{ij}\geq 0, i,j=1,2$ are constants. The non-negative functions $f_{1},f_2$ are of class $C^{2}$ and not identically zero. Here $\{W(t)\}_{t \geq 0} $ and $\{B^{H}(t)\}_{t \geq 0}$  are standard one-dimensional Brownian and fractional Brownian motions (fBm) respectively with  Hurst index $\frac{1}{2}<H<1$ and are defined on the same filtered probability space $\left( \Omega, \mathcal{F}, (\mathcal{F}_{t})_{t \geq 0}, \mathbb{P} \right)$. In the case of $H=\frac{1}{2}$ and $\gamma_{1}=\gamma_{2}=0$ in \eqref{b1}, lower and upper bounds for the blow-up times of solution are obtained in \cite{li}. 

	fBm arises in various stochastic phenomena characterized by rough external forces. In contrast to standard Brownian motion, fBm is neither a semimartingale nor a Markov process, which makes it challenging to apply classical theory of stochastic integration. Furthermore, considering their  analytic and probabilistic characteristics, the existence of both fractional and Brownian motion of the system \eqref{b1}-\eqref{b2} models various facets of the solution representing evolution of random growth over time.

	Let us recall the notion of weak and mild solutions $u=(u_{1},u_{2})^{\top}$ of the system \eqref{b1}-\eqref{b2}.

	\begin{Def}{\bf (Weak solution)} Let $0\leq \tau= \tau(\omega) \leq  +\infty$ be a stopping time. A continuous $\{\mathcal{F}_{t}\}_{t \geq 0}$-adapted random field $u=(u_1,u_2)^{\top}=\left\lbrace (u_{1}(t,x),u_2(t,x))^{\top}: \ t\geq 0,\ x \in D \right\rbrace $ is a {\it weak solution} of the system  \eqref{b1}-\eqref{b2} on the interval $(0,\tau)$ provided 
	\begin{align*} 
	\int_{D}u_{i}\left(t,x\right)\varphi_{i}(x)dx&=\int_{D}u_{i}\left(0,x\right)\varphi_{i}(x)dx+\int_{0}^{t}\int_{D}u_{i}\left(s,x\right)\Delta \varphi_{i}(x)dxds+\gamma_{i}\int_{0}^{t}\int_{D}u_{i}\left(s,x\right)\varphi_{i}(x)dxds\nonumber\\
	&\quad+\int_{0}^{t}\int_{D} u^{1+\beta_{i}}_{j}\left(s,x\right)\varphi_{i}(x)dxds+k_{i1}\int_{0}^{t}\int_{D}u_{i}\left(s,x\right)\varphi_{i}(x)dxdW(s)\nonumber\\
	&\quad+k_{i2}\int_{0}^{t}\int_{D}u_{i}\left(s,x\right)\varphi_{i}(x)dxdB^{H}(s),\ \mathbb{P}\mbox{-a.s.},
	\end{align*} 	
	holds for every $\varphi_i\in C^\infty_0(D),\ i=1,2, \ j\in \left\{ 1,2 \right\} / \left\{i\right\}$ and for each $t\in(0,\tau)$.
	\end{Def}

	\begin{Def}{\bf (Mild solution)}
	For any non-negative bounded measurable initial data $f=(f_{1}, f_{2})^{\top}$, if there exists a number $0<\tau=\tau(\omega)\leq \infty$, we say that a continuous $\{\mathcal{F}_{t}\}_{t \geq 0}$-adapted random field, $u=(u_1,u_2)^{\top}=\left\lbrace (u_{1}(t,x),u_2(t,x))^{\top}: \ t\geq 0,\ x \in D \right\rbrace $ is a \emph{mild solution} of the system \eqref{b1}-\eqref{b2} on the interval $(0,\tau)$ if it satisfies 
	\begin{align}
	u_{i}(t,x)&=S_{t}f_{i}(x)+\int_{0}^{t} S_{t-r}[\gamma_{i} u_{i}(r,x)+u_{j}^{1+\beta_{i}}(r,x)]dr+ k_{i1}\int_{0}^{t}S_{t-r}u_{i}(r,x)dW(r) \nonumber\\
	&\qquad+k_{i2}\int_{0}^{t}S_{t-r}u_{i}(r,x)dB^{H}(r),\ \mathbb{P}\mbox{-a.s.}, \nonumber
	\end{align}
	$i=1,2, \ j\in \left\{ 1,2 \right\} / \left\{i\right\},$ for each $t \in(0, \tau)$ and $x \in D$.
	\end{Def} 
	Here, the semigroup $\left\{ S_{t} \right\}_{t\geq 0}$  of bounded linear operators is defined by 
	\begin{eqnarray}
	S_{t}f(x)=\mathbb{E}\left[ f(X_{t}), \ t<\tau_{D}|X_{0}=x \right], \quad  x\in D,\nonumber
	\end{eqnarray}
	for all bounded and measurable $f : D\rightarrow \mathbb{R},$ where $\left\{ X_{t} \right\}_{t\geq 0}$ is the Brownian motion in $\mathbb{R}^{d}$ with variance parameter $2$, killed at the time $\tau_{D}$ at which it strikes the boundary $\partial D.$ As discussed above,  $\lambda>0$ is the first eigenvalue of $-\Delta$ on $D$, which satisfies 
	\begin{eqnarray} \label{a3}
	-\Delta \psi(x)=\lambda \psi(x), \ \ x\in D,
	\end{eqnarray}
	$\psi$ being the corresponding eigenfunction, which is strictly positive on $D$ and $\psi|_{\partial D} =0.$ Remember that $$S_{t}\psi=\exp\{{-\lambda t}\}\psi, \ \ t\geq 0,$$ where we  have assumed that $\psi$ is normalized so that $\int_{D} \psi(x)dx=1.$ Let $\left\lbrace p_{t}(x,y)\right\rbrace_{t>0}$ be the transition kernel of $\left\lbrace S_{t}\right\rbrace_{t \geq 0}$ and one can refer to \cite{wang1992} for its properties. From the semigroup theory \cite{pazy} and \cite[Theorem 2.1]{doz2023}, we note that for any bounded measurable initial data $f=(f_{1}, f_{2})^{\top}$, there exists a unique local mild solution $v=(v_1,v_2)^{\top}$ of the system of random PDEs \eqref{s1} (see below) by Theorem 2.5 in \cite{Atienza} which is the weak solution of the system \eqref{s1}. By using  a random transformation given in \eqref{ran1}, we obtain the existence and blow-up  of the weak solution $u=(u_1,u_2)^{\top}$ of the system \eqref{b1}-\eqref{b2}.

 	The remaining sections are organized as follows: The next section is devoted to obtain an associated system of random PDEs, by using  a random transformation (cf. \eqref{ran1}) to the system \eqref{b1}-\eqref{b2}, which is useful to establish  lower and upper bounds for the explosion time $\tau$. In Section \ref{sec3}, Theorem \ref{t2} establish a lower bound for the finite-time blow-up solution $u=(u_{1},u_{2})^{\top}$ of the system \eqref{b1}-\eqref{b2}. The special case of $(1+\beta_{1})k_{21}-k_{11}=(1+\beta_{2})k_{11}-k_{21} =:\rho_{1}, \
	(1+\beta_{1})k_{22}-k_{12}=(1+\beta_{2})k_{12}-k_{22}=:\rho_{2}$ is considered in Section \ref{sec4}. Under the above setting, the relevant exponential functionals of the form $\int_{0}^{t} \{\exp\{{\rho_1 W(r)+\rho_2 B^{H}(r)-ar}\}dr,$ and an upper bound for the finite-time blow-up ($\tau^*_{1}\ \& \ \tau_{2}^{\ast}$) is obtained explicitly in Theorem \ref{thm4.1}. By relaxing the above conditions,  the lower and upper bounds ($\tau_{**}, \ \tau^{**}_{1}$ and $\tau^{**}_{2}$) for the finite-time blow-up solution $u=(u_1,u_2)^{\top}$ to the system \eqref{b1}-\eqref{b2}  are established (Theorems \ref{thm5.1} and \ref{thm5.2}) and sufficient conditions for the global existence of a solution $u=(u_{1},u_{2})^{\top}$ (cf. Theorem  \ref{cor1}) to the system \eqref{b1}-\eqref{b2} when $\frac{1}{2}<H<1$ and the initital values  $f_{1}=C_{1}\psi$ and $f_{2}=C_{2}\psi$ for some positive constants $C_{1}$ and $C_{2}$ with $C_1 \leq C_2$. Also, sufficient conditions are provided with sharp bounds (cf. Theorem \ref{sh1}) in Section \ref{sec5}.  %The upper bounds of the blow-up times are obtained in 
	%Section \ref{sec6}, explicitly for the case $V_i=0$, 
	%in terms of the exponential functionals of the form $\int_{0}^{t} \{\exp\{{\rho_1 B_{1}^{H}(r)+\rho_2 B_{2}^{H}(r)-ar}\}dr$  (Theorem \ref{thm6.1}). 
	In Section \ref{s2}, we obtain an upper bound for the probability of finite-time blow-up	solution $u=(u_{1},u_{2})^{\top}$ of the system \eqref{b1}-\eqref{b2} and blow-up before a given fixed time $T>0$ with $\frac{1}{2}<H<1$ established in Theorem \ref{thm6}. Further, in Theorem \ref{thm6t} the upper bounds for the tail of $\tau_{1}^{\ast}$ and $\tau_{2}^{\ast}$ with the general dependent structure of the Brownian motion $W(\cdot)$ and fBm $B^{H}(\cdot)$. Next, we explicitly provide a lower bound for the probability of finite-time blow-up solution $u=(u_{1},u_{2})^{\top}$ of the system (\ref{b1})-\eqref{b2} for an appropriate choice of parameters, by using  the Malliavin calculus and the method adopted in \cite{doz2023} (the results established in Theorem \ref{thm6.5}). Also for $\frac{3}{4}<H<1$, when $W$ and $B^{H}$  are independent with $\rho_{1}=\rho_{2}=\rho$ (say), where $\rho_{1}$ and $\rho_{2}$ are given above, we obtain more explicit lower bound for the probability of finite-time blow-up solution $u=(u_{1},u_{2})^{\top}$ of the system \eqref{b1}-\eqref{b2} (cf. Theorem \ref{thm6.6}).
%. The process $M(t)=W(t)+B^{H}(t)$ is equivalent to a Brownian motion \cite{cher2001}, for this case allows to apply the Yor formula \cite{yor2001} on the law of exponential function of Brownian motion 
		
	\section{A System of Random PDEs}\label{sec2}
	In this section, we obtain a system of random PDEs by using the random transformations 
	\begin{align}\label{ran1}
	v_{i}(t,x) = \exp\{-k_{i1}W(t)-k_{i2}B^{H}(t)\}u_{i}(t,x), i=1,2,
	\end{align}
	for $t \geq 0,\ x \in D$ and $\frac{1}{2}<H<1$,  the system \eqref{b1}-\eqref{b2} can be transformed into a system of random PDEs:
	\begin{equation}\label{s1}
	\left\{
	\begin{aligned}
	\frac{\partial v_{i}(t,x)}{\partial t}&=\left( \Delta+\gamma_{i}-\frac{k_{i1}^{2}}{2} \right)v_{i}(t,x)+e^{-k_{i1}W(t)-k_{i2}B^{H}(t)}\left( e^{k_{j1}W(t)+k_{j2}B^{H}(t)}v_{j}(t,x) \right)^{1+\beta_{i}},\\
	v_{i}(0,x)&=f_{i}(x), \ \  x \in D, \\
	v_{i}(t,x)&=0, \ \ t \geq 0 \ \  x \in \partial D, 
	\end{aligned}
	\right.
	\end{equation}
	for $i=1,2, \ j\in \left\{ 1,2 \right\} / \left\{i\right\}.$ This system is understood in the pathwise sense and thus classical results for parabolic PDEs (see Friedman \cite[Chapter 7]{fried1964} and \cite{doz2023}) can be applied to show existence, uniqueness and positivity of solution $v=(v_1,v_2)^{\top}\in C([{0,\tau}];L^{\infty}(D;\mathbb{R}^2))$ up to an eventual blow-up. 
	
	The following result provides an equivalence between the weak solution $u=(u_1,u_2)^{\top}$ of the system of stochastic PDEs \eqref{b1}-\eqref{b2} and the system of random PDEs \eqref{s1}.
	\begin{theorem} \label{thm2.1}
	Let $u=(u_1,u_2)^{\top}$ be the weak solution of the system \eqref{b1}-\eqref{b2}. Then the function $v=(v_1,v_2)^{\top}$ defined by
	\begin{eqnarray}
	v_{i}(t,x) = \exp\{-k_{i1}W(t)-k_{i2}B^{H}(t)\}u_{i}(t,x), \ t \geq 0,\ x \in D,\ i=1,2, \nonumber 
	\end{eqnarray} 
	is a weak solution of the system of random PDEs \eqref{s1} and viceversa. 
	\begin{proof}
	By using Ito's formula \cite[Theorem 2.7.2]{mis2008}, we have for $\frac{1}{2}<H<1$
	\begin{align}
	e^{-k_{i1}W(t)-k_{i2}B^{H}(t)}=&1-\int_{0}^{t} e^{-k_{i1}W(s)-k_{i2}B^{H}(s)} (k_{i1}dW(s)+k_{i2}dB^{H}(s))\nonumber\\
	&+\frac{k_{i1}^{2}}{2}\int_{0}^{t} e^{-k_{i1}W(s)-k_{i2}B^{H}(s)}ds. \nonumber 
	\end{align}
	For any smooth function $\varphi_{i},\ i=1,2$  with compact support, we set
	$$u_{i}(t,\varphi_{i})=\int_{D}u_{i}(t,x) \varphi_{i}(x)dx, \ \ i=1,2.$$
	Then the weak solution $u=(u_1,u_2)^{\top}$ of the system \eqref{b1}-\eqref{b2} is given by
	\begin{align}\label{c1} 
	u_{i}\left(t,\varphi_{i}\right)&=u_{i}\left(0,\varphi_{i}\right)+\int_{0}^{t}u_{i}\left(s,\Delta \varphi_{i}\right)ds+\gamma_{i}\int_{0}^{t}u_{i}\left(s,\varphi_{i}\right)ds+\int_{0}^{t} u^{1+\beta_{i}}_{j}\left(s,\varphi_{i}\right)ds\nonumber\\
	&\quad+k_{i1}\int_{0}^{t}u_{i}\left(s,\varphi_{i}\right)dW(s)+k_{i2}\int_{0}^{t}u_{i}\left(s,\varphi_{i}\right)dB^{H}(s),
	\end{align}
	for $i=1,2,\ \left\lbrace j \right\rbrace=\left\lbrace1,2\right\rbrace/\{i\}.$ By applying the integration by parts formula, we obtain
	\begin{align*}
	v_{i}(t,\varphi_{i}):&=\int_{D}v_{i}(t,x)\varphi_{i}(x)dx \nonumber\\
    &=v_{i}(0,\varphi_{i})+\int_{0}^{t} e^{-k_{i1}W(t)-k_{i2}B^{H}(t)}du_{i}(s,\varphi_{i})+\int_{0}^{t} u_{i}(s,\varphi_{i})\Bigg(e^{-k_{i1}W(s)-k_{i2}B^{H}(s)} \Big( k_{i1}dW(s)\nonumber\\
    &\quad+k_{i2}dB^{H}(s)\Big) ds+\frac{k_{i1}^{2}}{2}\int_{0}^{t} e^{-k_{i1}W(s)-k_{i2}B^{H}(s)}ds\Bigg) -k_{i1}^{2}\int_{0}^{t}e^{-k_{i1}W(s)-k_{i2}B^{H}(s)} u_{i}(t, \varphi_{i})ds.
	\end{align*} 
	Therefore,
	\begin{align}\label{a6}
	v_{i}(t,\varphi_{i}) &= v_{i}(0,\varphi_{i})+\int_{0}^{t}v_{i}\left(s,\Delta \varphi_{i}\right)ds+\gamma_{i}\int_{0}^{t}v_{i}\left(s,\varphi_{i}\right)ds-\frac{k_{i1}^{2}}{2} \int_{0}^{t}v_{i}(s \varphi_{i})ds  \nonumber\\
	&\qquad +\int_{0}^{t}e^{-k_{i1}W(t)-k_{i2}B^{H}(t)} \left( e^{k_{j1}W(t)+k_{j2}B^{H}(t)} v_{j} \right)^{1+\beta_{i}} (s,\varphi_i)ds \nonumber\\
	&= v_{i}(0,\varphi_{i})+\int_{0}^{t}\left[ \Delta+\gamma_{i}-\frac{k_{i1}^{2}}{2}\right]  v_{i}\left(s,\varphi_{i}\right)ds \nonumber\\
	&\qquad+\int_{0}^{t}e^{-k_{i1}W(t)-k_{i2}B^{H}(t)} \left( e^{k_{j1}W(t)+k_{j2}B^{H}(t)} v_{j} \right)^{1+\beta_{i}} (s,\varphi_i)ds,  
	\end{align}
	for $i=1,2,\ \left\lbrace j \right\rbrace=\left\lbrace1,2\right\rbrace/\{i\}.$ The preceding equalities mean that $v=(v_1,v_2)^{\top}$ is a weak solution of the system  \eqref{s1}. The converse part follows from the fact that the change of variable is given by a homeomorphism  which transforms one random dynamical system into an another equivalent one.	
	%which is differentiable with respect to $t.$ From the uniqueness of weak solution, $v=(v_1,v_2)^{\top}$ solves \eqref{s1}. 
	\end{proof}
	\end{theorem}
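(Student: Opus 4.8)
The plan is to derive the weak formulation for $v$ directly from that of $u$ by treating the multiplicative factor $\Theta_i(t):=\exp\{-k_{i1}W(t)-k_{i2}B^{H}(t)\}$ as a mixed semimartingale and applying an integration-by-parts (product) formula to $v_i(t,\varphi_i)=\Theta_i(t)\,u_i(t,\varphi_i)$. First I would compute the stochastic differential of $\Theta_i$ via the It\^o formula for mixed fractional Brownian motion valid for $\frac12<H<1$ (as quoted from \cite[Theorem 2.7.2]{mis2008}). The crucial structural feature is that, because $H>\frac12$, the fBm $B^{H}$ has zero quadratic variation, so the only second-order correction comes from the Brownian part $W$, yielding
\[
d\Theta_i(t)=-\Theta_i(t)\bigl(k_{i1}\,dW(t)+k_{i2}\,dB^{H}(t)\bigr)+\tfrac{k_{i1}^2}{2}\Theta_i(t)\,dt.
\]

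Next I would test the weak formulation \eqref{c1} of $u$ against $\varphi_i$ and apply the product rule $d(\Theta_i u_i)=\Theta_i\,du_i+u_i\,d\Theta_i+d\langle\Theta_i,u_i\rangle$. The two essential cancellations to verify are: (i) the stochastic integrals $k_{i1}\Theta_i u_i\,dW+k_{i2}\Theta_i u_i\,dB^{H}$ arising from $\Theta_i\,du_i$ are exactly offset by the corresponding terms coming from $u_i\,d\Theta_i$, so that the resulting equation for $v$ contains no stochastic integrals and is understood pathwise; and (ii) the It\^o correction $+\tfrac{k_{i1}^2}{2}\Theta_i u_i$ combines with the cross-variation $d\langle\Theta_i,u_i\rangle=-k_{i1}^2\Theta_i u_i\,dt$ (again only the $W$-parts contribute when $H>\frac12$) to leave the net drift $-\tfrac{k_{i1}^2}{2}v_i$. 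Collecting terms and using $u_j=\Theta_j^{-1}v_j$ to rewrite the nonlinearity as $\Theta_i(\Theta_j^{-1}v_j)^{1+\beta_i}$ then recovers precisely \eqref{a6}, that is, the weak form of \eqref{s1}.

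For the converse, I would observe that the map $(u_1,u_2)^{\top}\mapsto(v_1,v_2)^{\top}$ given by \eqref{ran1} is, for each fixed path of $(W,B^{H})$, a pathwise homeomorphism with explicit inverse $u_i=\exp\{k_{i1}W+k_{i2}B^{H}\}v_i$. Running the same product-rule computation in reverse (now adding rather than subtracting the noise contributions) shows that any weak solution of \eqref{s1} is carried to a weak solution of \eqref{b1}-\eqref{b2}, so the two notions coincide.

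The main obstacle I anticipate is rigorously justifying the integration-by-parts formula and the vanishing of the $B^{H}$-quadratic and cross-variation contributions in the mixed setting. This requires care about which integral with respect to $B^{H}$ is used; for $H>\frac12$ the pathwise (Young/Riemann--Stieltjes) integral is available and the fBm has vanishing $2$-variation, so the It\^o correction involves only $k_{i1}^2$ and the covariation $\langle\Theta_i,u_i\rangle$ picks up only the Brownian martingale parts. This is the single point where the restriction $H>\frac12$ is genuinely used, and it must be invoked with the appropriate stochastic-calculus result rather than treated purely formally.
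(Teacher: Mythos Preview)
Your proposal is correct and follows essentially the same route as the paper: apply the mixed It\^o formula to $\Theta_i=\exp\{-k_{i1}W-k_{i2}B^{H}\}$, use the product (integration-by-parts) rule on $\Theta_i\,u_i(t,\varphi_i)$, and invoke the pathwise homeomorphism for the converse. If anything, you are more explicit than the paper about the two cancellations (the stochastic integrals and the $-k_{i1}^2$ cross-variation combining with the $+\tfrac{k_{i1}^2}{2}$ correction) and about why $H>\tfrac12$ ensures the $B^{H}$-parts contribute no quadratic or cross variation.
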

   From the classical results available in semigroup theory \cite{pazy}, we note that for any bounded measurable initial data $f=(f_{1}, f_{2})^{\top},$ there exists a unique local mild solution $v=(v_1,v_2)^{\top}$ of the system of random PDEs \eqref{s1}. Precisely, there exists a number $0<\tau=\tau(\omega)\leq \infty$ such that $v=(v_1,v_2)^{\top}$ satisfies the integral equation on $(0, \tau)$
	\begin{align}\label{a2}
	v_{i}(t,x)&=\exp \left\{ \left( \gamma_{i}- \frac{k_{i1}^{2}}{2}\right) t \right\}S_{t}f_{i}(x) \\
	& \quad+\int_{0}^{t} e^{(t-r)\left( \gamma_{i}- \frac{k_{i1}^{2}}{2}\right) } S_{t-r}\left[ e^{-k_{i1}W(r)-k_{i2}B^{H}(r)}\left(e^{k_{j1}W(r)+k_{j2}B^{H}(r)}v_{j}(r,\cdot)\right)^{1+\beta_{i}} \right](x)dr, \nonumber
	\end{align}
	for $i=1,2, \ j\in \left\{ 1,2 \right\} / \left\{i\right\}$ and $x\in D$.  
	
	Let $\tau$ be the blow-up time of the system \eqref{s1} with the initial values of the above form. Due to Theorem \ref{thm2.1} and a.s continuity of $W(\cdot)$ and $B^{H}(\cdot)$, $\tau$ is also the blow-up time for the system  \eqref{b1}-\eqref{b2}.    Our aim is to find random times $\tau_{*}$ and $\tau^{*}$ such that $0< \tau_*\leq \tau\leq \tau^*$.

	\section{A Lower Bound for $\tau$}\label{sec3}
     This section aims to find a lower bound $\tau_*$ to the blow-up times such that $ \tau_*\leq \tau$ when $\frac{1}{2}<H<1$.	First, we consider the system \eqref{b1}-\eqref{b2} with the set of parameters $\gamma_{i}=\lambda+\frac{k_{i1}^{2}}{2},\ i=1,2,$ and 
	\begin{equation}\label{a4}
	\begin{aligned} 
	 (1+\beta_{1})k_{21}-k_{11}&=(1+\beta_{2})k_{11}-k_{21} =:\rho_{1},\\
	(1+\beta_{1})k_{22}-k_{12}&=(1+\beta_{2})k_{12}-k_{22}=:\rho_{2}. 
	\end{aligned}
	\end{equation}
	The following result provides a lower bound for the finite-time
	blow-up solution $u=(u_{1},u_{2})^{\top}$ of the system \eqref{b1}-\eqref{b2}.
	\begin{theorem}\label{t2} Assume that the conditions given in  (\ref{a4}) holds, and let the initial values be of the form
	\begin{eqnarray} \label{q1}
	f_{1}=C_{1} \psi \ \ \mbox{and} \ \ f_{2}=C_{2} \psi ,
	\end{eqnarray} 
	for some positive constants $C_{1}$ and $C_{2}$ with $C_{1}\leq C_{2}$. Let $\tau_{\ast}$ is given by
	\begin{align}
	\tau_{\ast} = \inf \Bigg\{ t\geq 0 : &\int_{0}^{t} \exp\{\rho_{1} W(r)+\rho_{2}B^{H}(r)\}dr 
	\geq \frac{1}{\beta_{1}C^{\beta_{1}}_{1}\left\|\psi \right\|_{\infty}^{\beta_{1}}}, \nonumber\\
	(or) &\int_{0}^{t} \exp\{\rho_{1} W(r)+\rho_{2}B^{H}(r)\}dr 
	\geq \frac{1}{\beta_{2}C^{\beta_{2}}_{2}\left\|\psi \right\|_{\infty}^{\beta_{2}}}	\Bigg\}. \nonumber
	\end{align}
	Then $\tau_{\ast}\leq\tau$, where $\|\psi\|_{\infty}=\displaystyle \sup_{x \in D}\psi(x).$
	\begin{proof}
	Let $v_{1}(\cdot,\cdot)$ and $v_{2}(\cdot,\cdot)$ solve \eqref{a2}. Then, we have
	\begin{align}
	v_{1}(t,x)&=\exp\{{\lambda t}\}S_{t}f_{1}(x)+\int_{0}^{t}\exp\{{\lambda (t-r)}\}S_{t-r} \left( \exp\{{\rho_{1} W(r)+\rho_{2}B^{H}(r)}\}v^{1+\beta_{1}}_{2}(r,x)\right)dr,\nonumber\\
	v_{2}(t,x)&=\exp\{{\lambda t}\}S_{t}f_{2}(x)+\int_{0}^{t}\exp\{{\lambda (t-r)}\}S_{t-r} \left( \exp\{{\rho_{1} W(r)+\rho_{2}B^{H}(r)}\}v^{1+\beta_{2}}_{1}(r,x)\right)dr, \nonumber
	\end{align}
	for all $x\in D,  \ t\geq 0.$ Let us define the operators $\mathcal{T}_{1}$ and $\mathcal{T}_{2}$ as
	\begin{align}
	\mathcal{T}_{1} v(t,x)&=\exp\{{\lambda t}\}S_{t}f_{1}(x)+\int_{0}^{t}\exp\{{\rho_{1} W(r)+\rho_{2} B^{H}(r)+\lambda (t-r)}\}\left( S_{t-r}v \right)^{1+\beta_{1}}dr,\nonumber\\
	\mathcal{T}_{2} v(t,x)&=\exp\{{\lambda t}\}S_{t}f_{2}(x)+\int_{0}^{t}\exp\{{\rho_{1} W(r)+\rho_{2} B^{H}(r)+\lambda (t-r)}\}\left( S_{t-r}v \right)^{1+\beta_{2}}dr,\nonumber
	\end{align}
	where $v(\cdot,\cdot)$ is any non-negative, bounded and measurable function.	
		
	First we shall prove that \begin{eqnarray}
	v_{1}(t,x)=\mathcal{T}_{1} v_{2}(t,x), \  v_{2}(t,x)=\mathcal{T}_{2} v_{1}(t,x), \ \ x\in D, \ \ 0 \leq t<\tau_{*},\nonumber
	\end{eqnarray}
	for some non-negative, bounded and measurable functions $v_1(\cdot,\cdot)$ and $v_2(\cdot,\cdot)$. Moreover, on the set $t\leq \tau_{*},$ we set
	\begin{align}
	\mathscr{G}_{1}(t)&=\left[ 1-\beta_{1} \int_{0}^{t}  \exp\{\rho_{1} W(r)+\rho_{2} B^{H}(r)\}\left\| \exp\{{ \lambda r}\}S_{r}f_{1} \right\|_{\infty}^{\beta_{1}}dr\right]^{\frac{-1}{\beta_{1}}},\nonumber\\
	\mathscr{G}_{2}(t)&=\left[ 1-\beta_{2} \int_{0}^{t}  \exp\{\rho_{1} W(r)+\rho_{2} B^{H}(r)\}\left\| \exp\{{\lambda r}\}S_{r}f_{2} \right\|_{\infty}^{\beta_{2}}dr\right]^{\frac{-1}{\beta_{2}}}.\nonumber
	\end{align}
	Then, it can be easily seen that 
	\begin{eqnarray}
	\frac{d \mathscr{G}_{1}(t)}{dt}=\exp\{\rho_{1} W(t)+\rho_{2} B^{H}(t)\}\left\| \exp\{{\lambda t}\}S_{t}f_{1} \right\|_{\infty}^{\beta_{1}} \mathscr{G}^{1+\beta_{1}}_{1}(t), \ \ \mathscr{G}_{1}(0)=1, \nonumber
	\end{eqnarray}
	so that 
	\begin{eqnarray}
	\mathscr{G}_{1}(t)=1+\int_{0}^{t} \exp\{\rho_{1} W(r)+\rho_{2} B^{H}(r)\} \left\| \exp\{{\lambda r}\}S_{r}f_{1} \right\|_{\infty}^{\beta_{1}} \mathscr{G}^{1+\beta_{1}}_{1}(r) dr. \nonumber
	\end{eqnarray}
	Similarly, we have 
	\begin{eqnarray}
	\mathscr{G}_{2}(t)=1+\int_{0}^{t} \exp\{\rho_{1} W(r)+\rho_{2} B^{H}(r)\}  \left\| \exp\{{\lambda r}\}S_{r}f_{2} \right\|_{\infty}^{\beta_{2}} \mathscr{G}^{1+\beta_{2}}_{2}(r) dr. \nonumber	
	\end{eqnarray}
	Let us choose $v(\cdot,\cdot)$ such that $$0 \leq v(t,x)\leq \exp\{{\lambda t}\}S_{t}f_{1}(x)\mathscr{G}_{1}(t),$$ for $x\in D$ and $t<\tau_{*}.$ Then $\exp\{{\lambda t}\}S_{t}f_{1}(x)\leq \mathcal{T}_{1} v(t,x)$ and
	\begin{align}
	\mathcal{T}_{1}v(t,x)&=\exp\{{\lambda t}\}S_{t}f_{1}(x)+\int_{0}^{t}\exp\{{\rho_{1} W(r)+\rho_{2} B^{H}(r)+\lambda (t-r)}\}\left( S_{t-r}v \right)^{1+\beta_{1}}dr\nonumber\\
	&\leq \exp\{{\lambda t}\}S_{t}f_{1}(x)+\int_{0}^{t}\exp\{{\rho_{1} W(r)+\rho_{2} B^{H}(r)+\lambda (t-r)}\}\nonumber\\ 
	& \qquad\times\left[ \exp\{{\lambda r}\} \mathscr{G}_{1}(r)\exp\{{\beta_{1}\lambda r}\} \mathscr{G}^{\beta_{1}}_{1}(r) \left\|S_{r}f_{1}\right\|_{\infty}^{\beta_{1}}S_{t-r}(S_{r}f_{1}(x)) \right]dr \nonumber\\
	&=\exp\{{\lambda t}\}S_{t}f_{1}(x)+\int_{0}^{t}\exp\{{\rho_{1} W(r)+\rho_{2} B^{H}(r)+\lambda t}\}\nonumber\\ 
	& \qquad\times\left[\exp\{{\beta_{1}\lambda r}\} \mathscr{G}^{1+\beta_{1}}_{1}(r) \left\|S_{r}f_{1}\right\|_{\infty}^{\beta_{1}}S_{t}f_{1}(x) \right]dr \nonumber\\
	&= \exp\{{\lambda t}\}S_{t}f_{1}(x)\Bigg\{ 1+ \int_{0}^{t} \exp\{{\rho_{1} W(r)+\rho_{2} B^{H}(r)}\}\left\| \exp\{{\lambda r}\}S_{r}f_{1} \right\|_{\infty}^{\beta_{1}} \mathscr{G}^{1+\beta_{1}}_{1}(r)dr\Bigg\} \nonumber\\
	&=\exp\{{\lambda t}\} S_{t}f_{1}(x)\mathscr{G}_{1}(t).\nonumber
	\end{align}
	Thus, we have
	$$\exp\{{\lambda t}\}S_{t}f_{1}(x)\leq \mathcal{T}_{1} v(t,x)  \leq \exp\{{\lambda t}\} S_{t}f_{1}(x)\mathscr{G}_{1}(t).$$
	Similarly, we get 
	\begin{eqnarray}
	\exp\{{\lambda t}\}S_{t}f_{2}(x)\leq \mathcal{T}_{2} w(t,x)\leq \exp\{{\lambda t}\}S_{t}f_{2}(x) \mathscr{G}_{2}(t), \nonumber
	\end{eqnarray}
	for all $w(\cdot,\cdot)$ such that $$0\leq w(t,x)\leq \exp\{{\lambda t}\} S_{t}f_{2}(x)\mathscr{G}_{2}(t).$$ Let us take
	\begin{eqnarray}\label{in1}
	u^{(0)}_{1}(t,x)=\exp\{{\lambda t}\}S_{t}f_{1}(x), \ \ u^{(0)}_{2}(t,x)=\frac{C_{1}}{C_{2}}\exp\{{\lambda t}\}S_{t}f_{2}(x), 
	\end{eqnarray}  
	and
	\begin{eqnarray}\label{in2}
	u^{(n)}_{1}(t,x)=\mathcal{T}_{1} u^{(n-1)}_{2}(t,x), \ \ u^{(n)}_{2}(t,x)=\mathcal{T}_{2} u^{(n-1)}_{1}(t,x), \ \ n\geq1, 
	\end{eqnarray}
	for $x\in D,\ 0\leq t<\tau_{*}.$ Our aim is to show that the sequences of functions $\{ u^{(n)}_{1} \} \ \mbox{and} \ \{u^{(n)}_{2}\}$ are increasing.
	We  consider
	\begin{align}
	u^{(0)}_{1}(t,x)&\leq \exp\{{\lambda t}\} S_{t}f_{1}(x)+\int_{0}^{t} \exp\{{\rho_{1} W(r)+\rho_{2} B^{H}(r)+\lambda (t-r)}\} \left( S_{t-r}u^{(0)}_{2}(r,x) \right)^{1+\beta_{1}}dr\nonumber\\
	&= \mathcal{T}_{1}u^{(0)}_{2}(t,x) = u^{(1)}_{1}(t,x).\nonumber
	\end{align}
	Assuming that $u^{(n)}_{i}\geq u^{(n-1)}_{i}, \ i=1,2,$ for some $n \geq 1,$  the monotonicity of $\mathcal{T}_{1}$ leads to the inequality
	\begin{eqnarray}
	u^{(n+1)}_{1}=\mathcal{T}_{1}  u^{(n)}_{2}(t,x)\geq \mathcal{T}_{1} u^{(n-1)}_{2}(t,x)=u^{(n)}_{1},\nonumber
	\end{eqnarray}
	and the monotonicity of $\mathcal{T}_{2}$ results in
	\begin{eqnarray}
	u^{(n+1)}_{2}=\mathcal{T}_{2}  u^{(n)}_{1}(t,x)\geq \mathcal{T}_{2} u^{(n-1)}_{1}(t,x)=u^{(n)}_{2},\nonumber
	\end{eqnarray}
	and therefore the limits 
	\begin{eqnarray}
	v_{1}(t,x)=\lim_{n\rightarrow \infty}u^{(n)}_{1}(t,x), \ \ v_{2}(t,x)=\lim_{n\rightarrow \infty}u^{(n)}_{2}(t,x), \nonumber
	\end{eqnarray}
	exist for $x\in D$ and $0 \leq t< \tau_{*}.$ Then by the monotone convergence theorem, we obtain 
	\begin{eqnarray}
	v_{1}(t,x)=\mathcal{T}_{1} v_{2}(t,x), \  v_{2}(t,x)=\mathcal{T}_{2} v_{1}(t,x), \ \ x\in D, \ \ 0 \leq t<\tau_{*}.\nonumber
	\end{eqnarray}
	Since $f_{1}=C_{1} \psi$ and $f_{2}=C_{2} \psi,$ for some positive constants $C_{1}$ and $C_{2}$ with $C_{1} \leq C_{2}$ following similarly as in the paper \cite[Theorem 3.1]{skm1}, we obtain
		\begin{align}
		\exp\{\lambda t\}S_{t} f_{1}(x)&\leq u^{(n)}_{1}(t,x) \leq \exp\{\lambda t\}S_{t} f_{1}(x)\mathscr{G}_{1}(t), \nonumber\\
		\exp\{\lambda t\}S_{t} f_{2}(x)&\leq u^{(n)}_{2}(t,x) \leq \exp\{\lambda t\}S_{t} f_{2}(x)\mathscr{G}_{2}(t). \nonumber
		\end{align}
	Moreover, we have
	\begin{align} 
	v_{1}(t,x)&=\mathcal{T}_{1} v_{2} (t,x)\leq \exp\{{\lambda t}\}S_{t}f_{1}(x) \mathscr{G}_{1}(t)\ \text{ and } \nonumber \\ v_{2}(t,x)&=\mathcal{T}_{2} v_{1}(t,x)\leq \exp\{{\lambda t}\}S_{t}f_{2}(x) \mathscr{G}_{2}(t), \nonumber
	\end{align}
	so that
	\begin{align}
	v_{1}(t,x) &\leq \frac{\exp\{{\lambda t}\}S_{t}f_{1}(x)}{\left[ 1-\beta_{1}\int_{0}^{t}  \exp\{\rho_{1} W(r)+\rho_{2} B^{H}(r)\}\left\| \exp\{{\lambda r}\}S_{r}f_{1} \right\|_{\infty}^{\beta_{1}}dr\right]^{\frac{1}{\beta_{1}}}}, \nonumber\\
	v_{2}(t,x) &\leq \frac{\exp\{{\lambda t}\}S_{t}f_{2}(x)}{\left[ 1-\beta_{2}\int_{0}^{t}  \exp\{\rho_{1} W(r)+\rho_{2} B^{H}(r)\}\left\| \exp\{{\lambda r}\}S_{r}f_{2} \right\|_{\infty}^{\beta_{2}}dr\right]^{\frac{1}{\beta_{2}}}}. \nonumber	
	\end{align}
	By the choice of initial values as in (\ref{q1}), the proof of the theorem follows.
	\end{proof}
	\end{theorem}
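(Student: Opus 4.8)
The plan is to work with the mild formulation \eqref{a2} of the transformed system \eqref{s1} and to build explicit pointwise barriers for $v_1$ and $v_2$ whose finiteness is governed precisely by the exponential functional defining $\tau_*$. Under the hypotheses \eqref{a4} the mild equations decouple into the symmetric form
\begin{align*}
v_1(t,x) &= e^{\lambda t}S_t f_1(x) + \int_0^t e^{\rho_1 W(r)+\rho_2 B^H(r)+\lambda(t-r)}\bigl(S_{t-r}v_2(r,\cdot)\bigr)^{1+\beta_1}dr, \\
v_2(t,x) &= e^{\lambda t}S_t f_2(x) + \int_0^t e^{\rho_1 W(r)+\rho_2 B^H(r)+\lambda(t-r)}\bigl(S_{t-r}v_1(r,\cdot)\bigr)^{1+\beta_2}dr,
\end{align*}
so I would introduce the solution operators $\mathcal{T}_1,\mathcal{T}_2$ as in the statement and recast the coupled problem as the fixed point system $v_1=\mathcal{T}_1 v_2$, $v_2=\mathcal{T}_2 v_1$.

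The core of the argument is an invariance estimate for a scalar barrier. I would define $\mathscr{G}_i(t)$ through the ODE $\mathscr{G}_i'(t)=e^{\rho_1 W(t)+\rho_2 B^H(t)}\|e^{\lambda t}S_t f_i\|_\infty^{\beta_i}\mathscr{G}_i^{1+\beta_i}(t)$ with $\mathscr{G}_i(0)=1$, and then prove the one-sided invariance: whenever $0\leq v(r,x)\leq e^{\lambda r}S_r f_1(x)\mathscr{G}_1(r)$ on $[0,t]$, one has $e^{\lambda t}S_t f_1(x)\leq \mathcal{T}_1 v(t,x)\leq e^{\lambda t}S_t f_1(x)\mathscr{G}_1(t)$, and symmetrically for $\mathcal{T}_2$. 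The decisive step is the linearization of the power term: I would estimate $\bigl(S_{t-r}v\bigr)^{1+\beta_1}$ by factoring out $\|e^{\lambda r}S_r f_1\|_\infty^{\beta_1}\mathscr{G}_1^{\beta_1}(r)$ while retaining a single linear factor $S_{t-r}(S_r f_1)$, which collapses to $S_t f_1$ via the semigroup property; inserting this into the integral reproduces exactly the integral equation satisfied by $\mathscr{G}_1$, so the barrier is reproduced rather than enlarged.

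With invariance in hand I would run a monotone Picard iteration: start from $u^{(0)}_1=e^{\lambda t}S_t f_1$ and $u^{(0)}_2=(C_1/C_2)e^{\lambda t}S_t f_2$, the factor $C_1/C_2\leq 1$ ensuring the base of the $u_2$ iteration sits below its first image, then set $u^{(n)}_1=\mathcal{T}_1 u^{(n-1)}_2$ and $u^{(n)}_2=\mathcal{T}_2 u^{(n-1)}_1$. Since $S_{t-r}$ is positivity preserving and $x\mapsto x^{1+\beta_i}$ is increasing, $\mathcal{T}_1,\mathcal{T}_2$ are order preserving, and an induction shows both sequences are nondecreasing; the invariance estimate keeps every iterate trapped under the respective barrier, so the monotone convergence theorem produces limits $v_1,v_2$ solving the fixed point system and satisfying $v_i(t,x)\leq e^{\lambda t}S_t f_i(x)\mathscr{G}_i(t)$ for $t<\tau_*$.

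Finally I would specialize to $f_i=C_i\psi$. The eigenfunction identity $S_t\psi=e^{-\lambda t}\psi$ makes $e^{\lambda t}S_t f_i=C_i\psi$ independent of $t$, so $\|e^{\lambda r}S_r f_i\|_\infty^{\beta_i}=C_i^{\beta_i}\|\psi\|_\infty^{\beta_i}$ is constant and $\mathscr{G}_i$ reduces to $\bigl[1-\beta_i C_i^{\beta_i}\|\psi\|_\infty^{\beta_i}\int_0^t e^{\rho_1 W(r)+\rho_2 B^H(r)}dr\bigr]^{-1/\beta_i}$. This barrier, and hence $v_i$, remains finite precisely until the first time the integral reaches $1/(\beta_i C_i^{\beta_i}\|\psi\|_\infty^{\beta_i})$, which is exactly $\tau_*$; since the transformation \eqref{ran1} is a homeomorphism, no blow-up of $u$ can occur before $\tau_*$, giving $\tau_*\leq\tau$. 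I expect the main obstacle to be the invariance estimate, namely arranging the linearization of $(S_{t-r}v)^{1+\beta_i}$ so that the semigroup cancellation is preserved and the nonlinearity reproduces the ODE for $\mathscr{G}_i$; a naive pointwise bound would destroy the identity $S_{t-r}S_r f_i=S_t f_i$ and fail to close the estimate.
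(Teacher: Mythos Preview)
Your proposal is correct and follows essentially the same approach as the paper's proof: the same operators $\mathcal{T}_1,\mathcal{T}_2$, the same barrier functions $\mathscr{G}_i$ defined via the Bernoulli ODE, the same linearization trick $(S_{t-r}v)^{1+\beta_i}\leq \|e^{\lambda r}S_r f_i\|_\infty^{\beta_i}\mathscr{G}_i^{\beta_i}(r)\,e^{\lambda r}\mathscr{G}_i(r)\,S_{t-r}(S_r f_i)$ exploiting the semigroup identity $S_{t-r}S_r=S_t$, the same monotone Picard iteration started at $u_1^{(0)}=e^{\lambda t}S_t f_1$ and $u_2^{(0)}=(C_1/C_2)e^{\lambda t}S_t f_2$, and the same specialization via $S_t\psi=e^{-\lambda t}\psi$. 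The paper defers the cross-barrier induction step (showing that $u_2^{(n-1)}$ stays under the $f_1$-barrier so that the $\mathcal{T}_1$-invariance applies) to a citation of \cite[Theorem~3.1]{skm1}, whereas you allude to it through the $C_1/C_2\leq 1$ factor; both presentations are equally brief on this point, but the underlying argument is the same.
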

	\begin{Rem} 
	For  general bounded, measurable and positive $f=(f_{1}, f_{2})^{\top}$ the blow-up time of (\ref{b1}) is bounded below by the random time 
	\begin{align}
	\inf \Bigg\{ &t\geq 0 : \int_{0}^{t} \exp\{{\rho_{1}W(r)+\rho_{2}B^{H}(r)}\} \left\| \exp\{{\lambda r}\}S_{r}f_{1} \right\|_{\infty}^{\beta_{1}}dr \geq \frac{1}{\beta_{1}} \nonumber\\
	&\mbox{ or }\int_{0}^{t} \exp\{{\rho_{1}W(r)+\rho_{2}B^{H}(r)}\} \left\| \exp\{{\lambda r}\}S_{r}f_{2} \right\|_{\infty}^{\beta_{2}}dr \geq  \frac{1}{\beta_{2}}\Bigg\}, \nonumber 
	\end{align} 
	which coincides with the lower bound $\tau_{\ast}$ in Theorem \ref{t2}, when the initial values satisfy (\ref{q1}).
	\end{Rem}
	
	\section{Upper Bound for $\tau$}\label{sec4}
	In this section, we obtain an upper bound $\tau^*$ for the blow-up time $\tau$, under the assumption \eqref{a4} and for any initial values $f=(f_{1}, f_{2})^{\top}$ with $\frac{1}{2}<H<1$. By setting $\varphi_{i}=\psi, \ i=1,2$, the system (\ref{a6}) reduces to
	\begin{align} \label{a5}
	v_{i}(t,\psi) &= v_{i}(0,\psi) + \int_{0}^{t} \left( \Delta +\gamma_{i}-\frac{k_{i1}^{2}}{2}\right) v_{i}(s, \psi)ds  \nonumber\\
	&\quad+ \int_{0}^{t} \exp\{{-k_{i1} W(s)-k_{i2} B^{H}(s)}\}  (\exp\{{k_{j1} W(s)+k_{j2} B^{H}(s)}\} v_{j})^{1+\beta_{i}}(s,\psi) ds. 
	\end{align}	
	%Next, we consider the integral $\displaystyle\int_{0}^{t} \left[ v_{i}(s,\Delta \psi) + \gamma_{i} v_{i}(s,\psi) \right] ds,$ and using (\ref{a3}), we have
	%\begin{eqnarray}
	%\int_{0}^{t} \left[ v_{i}(s,\Delta \psi) +\gamma_{i} v_{i}(s,\psi) \right] ds &=& \int_{0}^{t} \int_{D} \left[v_{i} (s,x) \Delta \psi(x) +\gamma_{i}v_{i}(s,x)\psi(x)\right]dxds \nonumber\\
	%&=& \int_{0}^{t} \int_{D} v_{i} (s,x) [\Delta+\lambda] \psi(x)dxds = 0. \nonumber
    %\end{eqnarray}
    Using the fact that $\psi(x)=0$ for $x \in \partial D,$ we have
    \begin{equation}
    \begin{aligned}
    v_{i}(s,\Delta \psi)=\int_{D} v_{i}(s,x)\Delta \psi(x)dx&=\int_{D} \Delta v_{i}(s,x)\psi(x)dx=\Delta v_{i}(s,\psi)=-\lambda v_{i}(s,\psi).\nonumber 
    \end{aligned}
    \end{equation}
    Therefore, (\ref{a5}) becomes
	\begin{align*}
	\frac{dv_{i}(t,\psi)}{dt} &=\left( -\lambda+\gamma_{i}-\frac{k_{i1}^{2}}{2}\right) v_{i}(t,\psi)+e^{-k_{i1} W(s)-k_{i2} B^{H}(s)}\left(e^{k_{j1} W(s)+k_{j2} B^{H}(s)} v_{j}\right)^{1+\beta_{i}}(t,\psi), \nonumber
	\end{align*}
	for $i=1,2, \ j\in \left\{ 1,2 \right\} / \left\{i\right\}.$ By using Jensen's inequality, we obtain 
	\begin{align}\label{j1}
	\lefteqn{\hspace{-0.8in}(\exp\{{k_{j1} W(s)+k_{j2} B^{H}(s)}\} v_{j})^{1+\beta_{i}}(s,\psi)}\nonumber\\
	&\geq \left[ \int_{D} (\exp\{{k_{j1} W(s)+k_{j2} B^{H}(s)}\}v_{j}(s,x))\psi(x)dx\right]^{1+\beta_{i}}\nonumber\\ 
	&=\exp\{{(1+\beta_{i})k_{j1} W(s)+(1+\beta_{i})k_{j2} B^{H}(s)}\}v_{j}(s,\psi)^{1+\beta_{i}}.
	\end{align}
	Thus, it is immediate that 
	\begin{align}
	\frac{dv_{i}(t,\psi)}{dt} &\geq \left( -\lambda+\gamma_{i}-\frac{k_{i1}^{2}}{2}\right) v_{i}(t,\psi)+\exp\{{-k_{i1} W(s)-k_{i2} B^{H}(s)}\}\nonumber\\
	&\quad\times \exp\{{(1+\beta_{i})k_{j1} W(s)+(1+\beta_{i})k_{j2} B^{H}(s)}\}v_{j}(s,\psi)^{1+\beta_{i}}, \nonumber
	\end{align}
	$i=1,2, \ j\in \left\{ 1,2 \right\} / \left\{i\right\}.$	
	In this way, $v_{i}(t,\psi) \geq h_{i}(t)\geq 0,$ for $ i=1,2,$ where
	\begin{equation} \label{ab2}
	\left\{
	\begin{aligned}
	\frac{dh_{1}(t)}{dt}&=\left( -\lambda+\gamma_{1}-\frac{k_{11}^{2}}{2}\right) h_{1}(t)+\exp\{{\rho_{1} W(t)+\rho_{2} B^{H}(t)}\} h_{2}^{1+\beta_{1}}(t)  \\ \frac{dh_{2}(t)}{dt}&=\left( -\lambda+\gamma_{2}-\frac{k_{21}^{2}}{2}\right)h_{2}(t)+\exp\{{\rho_{1} W(t)+\rho_{2} B^{H}(t)}\} h_{1}^{1+\beta_{2}}(t),\\
	h_{i}(0)&=v_{i}(0,\psi), \ i=1,2. 
	\end{aligned}
	\right.
	\end{equation}
	Let  us define $E(t):=h_{1}(t)+h_{2}(t)\geq 0, \ t\geq 0,$ so that $E(\cdot)$ satisfies 
	\begin{eqnarray}\label{e1}
	\frac{dE(t)}{dt}\geq (-\lambda+\gamma-k^{2})E(t)+\exp\{{\rho_{1} W(t)+\rho_{2} B^{H}(t)}\} \left[ h_{1}^{1+\beta_{2}}(t)+h_{2}^{1+\beta_{1}}(t) \right],
	\end{eqnarray}
	where $\gamma=\min\{\gamma_1,\gamma_2\}$ and $k^{2}=\max\left\lbrace  \frac{k_{11}^{2}}{2}, \frac{k_{21}^{2}}{2} \right\rbrace$.
	
	The following theorem gives an upper bound for the finite-time blow-up of solution $u =(u_{1}, u_{2})^{\top}$ of the system \eqref{b1}-\eqref{b2}.	
	\begin{theorem} \label{thm4.1} 
	Assume that $\beta_{1} \geq \beta_{2}>0$, for any initial values $f=(f_{1}, f_{2})^{\top}$, we have the following results:
	\begin{itemize}
	\item [1.] If  $\beta_{1}=\beta_{2}=\beta\ (say)>0$, let $\tau^{\ast}_{1}$ is given by
	\begin{eqnarray}\label{ST1}
	\tau^{\ast}_{1} = \inf \left\lbrace t\geq 0 : \int_{0}^{t} \exp\{{\textcolor{red}{\rho_{1}} W(s)+\textcolor{red}{\rho_{2}} B^{H}(s)+\beta(-\lambda+\gamma-k^{2})s}\} ds\geq 2^{\beta} {\beta}^{-1}E^{-\beta}(0) \right\rbrace. 
	\end{eqnarray}  
	Then $\tau \leq \tau^{\ast}_{1}$, where $\rho_{1}$ and $\rho_{2}$ are defined in \eqref{a4}. \\
	\item [2.] If $\beta_{1}>\beta_{2}>0$ and let $D_{1}=\left( \frac{\beta_{1}-\beta_{2}}{1+\beta_{1}} \right)\left(\frac{1+\beta_{1}}{1+\beta_{2}} \right)^{\frac{1+\beta_{2}}{\beta_{1}-\beta_{2}}},$
	\begin{align}\label{nca3}
	\epsilon_{0}&\leq \min \Bigg\{ 1, \left( h_{\textcolor{red}{2}}(0)/D_{1}^{1/1+\beta_{2}} \right)^{\beta_{1}-\beta_{2}} \Bigg\}.
	\end{align} 
	Assume that 
	\begin{align}\label{nca4}
	2^{-(1+\beta_{2})}\epsilon_{0}E^{1+\beta_{2}}(0)\geq \epsilon_{0}^{\frac{1+\beta_{1}}{\beta_{1}-\beta_{2}}}D_{1}.
	\end{align}
	Then $\tau\leq\tau^{\ast}_{2}$, where $\tau^{\ast}_{2}$ is given by
	\begin{align}\label{ST2}
	\tau^{\ast}_{2}& = \inf \left\lbrace  t\geq 0 : \int_{0}^{t} \exp\{\rho_{1} W(s)+\rho_{2} B^{H}(s)+\beta_{2}(-\lambda+\gamma-k^{2})s\}ds \right.\nonumber\\&\qquad\qquad\left. \geq {\left[\beta_{2} E^{\beta_{2}}(0)\left(\frac{\epsilon_0}{2^{1+\beta_{2}}}-\frac{ \epsilon_{0}^{\frac{1+\beta_{1}}{\beta_{1}-\beta_{2}}}D_{1}}{E^{1+\beta_{2}}(0)}\right)\right]^{-1}}\right\rbrace, 
	\end{align}
	where $\rho_{1}$ and $\rho_{2}$ are defined in \eqref{a4}, $\gamma=\min\{\gamma_1,\gamma_2\}$, $E(0)=\displaystyle\int_{D}[f_{1}(x)+f_{2}(x)]\psi(x)dx$ and $k^{2}=\max\left\lbrace  \frac{k_{11}^{2}}{2}, \frac{k_{21}^{2}}{2} \right\rbrace$.
    \end{itemize}
    \end{theorem}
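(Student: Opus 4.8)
The plan is to reduce each case to a scalar Bernoulli-type differential inequality for $E(t)=h_{1}(t)+h_{2}(t)$, where $h_{1},h_{2}$ are the sub-solutions of \eqref{ab2} satisfying $v_{i}(t,\psi)\ge h_{i}(t)\ge 0$. Throughout I abbreviate $a:=-\lambda+\gamma-k^{2}$ and $g(t):=\exp\{\rho_{1}W(t)+\rho_{2}B^{H}(t)\}$, so that \eqref{e1} reads $\frac{dE}{dt}\ge aE+g(t)\,[h_{1}^{1+\beta_{2}}(t)+h_{2}^{1+\beta_{1}}(t)]$, with $E(0)=\int_{D}[f_{1}+f_{2}]\psi\,dx$. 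In both parts I will bound the bracket below by a suitable power of $E$, integrate the resulting scalar inequality by the classical substitution that linearises a Bernoulli equation, and read off the explosion time of the companion solution $z$; since $E(t)\ge z(t)$ by a standard ODE comparison argument, the blow-up time $\tau$ of the system is at most that of $z$.

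For part~1 ($\beta_{1}=\beta_{2}=\beta$) the bracket is $h_{1}^{1+\beta}+h_{2}^{1+\beta}$, and convexity of $x\mapsto x^{1+\beta}$ gives $h_{1}^{1+\beta}+h_{2}^{1+\beta}\ge 2^{-\beta}(h_{1}+h_{2})^{1+\beta}=2^{-\beta}E^{1+\beta}$. Hence $E$ dominates the solution $z$ of the Bernoulli equation $\dot z=az+2^{-\beta}g(t)z^{1+\beta}$, $z(0)=E(0)$. Setting $w:=z^{-\beta}$ linearises this to $\dot w+\beta a\,w=-\beta 2^{-\beta}g(t)$, whose integrating-factor solution is $w(t)=e^{-\beta a t}\big[E(0)^{-\beta}-\beta 2^{-\beta}\int_{0}^{t}g(s)e^{\beta a s}\,ds\big]$. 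The companion solution explodes exactly when $w(t)=0$, i.e.\ when $\int_{0}^{t}g(s)e^{\beta a s}\,ds=2^{\beta}\beta^{-1}E(0)^{-\beta}$; recalling $g(s)e^{\beta a s}=\exp\{\rho_{1}W(s)+\rho_{2}B^{H}(s)+\beta(-\lambda+\gamma-k^{2})s\}$, this is precisely $\tau^{\ast}_{1}$ in \eqref{ST1}, and $\tau\le\tau^{\ast}_{1}$ follows. This step is insensitive to the sign of $a$.

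For part~2 ($\beta_{1}>\beta_{2}$) the exponents in the bracket no longer match, which is the crux of the difficulty. I would first absorb the higher power by the Young-type inequality
\[
h_{2}^{1+\beta_{1}}\ \ge\ \epsilon_{0}\,h_{2}^{1+\beta_{2}}-D_{1}\,\epsilon_{0}^{\frac{1+\beta_{1}}{\beta_{1}-\beta_{2}}},\qquad h_{2}\ge 0,
\]
obtained by minimising $h\mapsto h^{1+\beta_{1}}-\epsilon_{0}h^{1+\beta_{2}}$ over $h\ge 0$ (the minimiser is $h_{\ast}=(\epsilon_{0}(1+\beta_{2})/(1+\beta_{1}))^{1/(\beta_{1}-\beta_{2})}$), the constant $D_{1}$ of the statement furnishing a valid—if not sharp—bound for the minimum value, while $\epsilon_{0}\le 1$ and \eqref{nca3} keep $h_{\ast}$ in the admissible range. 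Using $\epsilon_{0}\le 1$ on the first summand and convexity as in part~1 then yields $h_{1}^{1+\beta_{2}}+h_{2}^{1+\beta_{1}}\ge \frac{\epsilon_{0}}{2^{1+\beta_{2}}}E^{1+\beta_{2}}-D_{1}\epsilon_{0}^{\frac{1+\beta_{1}}{\beta_{1}-\beta_{2}}}$, so that $\frac{dE}{dt}\ge aE+g(t)\big[\frac{\epsilon_{0}}{2^{1+\beta_{2}}}E^{1+\beta_{2}}-D_{1}\epsilon_{0}^{\frac{1+\beta_{1}}{\beta_{1}-\beta_{2}}}\big]$.

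The remaining and most delicate step is to convert this into a clean Bernoulli inequality despite the additive constant. Condition \eqref{nca4} states exactly that $\frac{\epsilon_{0}}{2^{1+\beta_{2}}}E^{1+\beta_{2}}(0)\ge D_{1}\epsilon_{0}^{\frac{1+\beta_{1}}{\beta_{1}-\beta_{2}}}$, i.e.\ the forcing bracket is nonnegative at $t=0$; together with the nonnegativity of the nonlinear forcing this is what keeps $E(t)\ge E(0)$ up to blow-up. With this lower bound in hand, the substitution $w:=E^{-\beta_{2}}$ gives $\dot w\le -\beta_{2}a\,w-\beta_{2}g(t)\big[\frac{\epsilon_{0}}{2^{1+\beta_{2}}}-D_{1}\epsilon_{0}^{\frac{1+\beta_{1}}{\beta_{1}-\beta_{2}}}E^{-(1+\beta_{2})}\big]$, and bounding $E^{-(1+\beta_{2})}\le E(0)^{-(1+\beta_{2})}$ produces $\dot w\le -\beta_{2}a\,w-\beta_{2}c\,g(t)$ with $c=\frac{\epsilon_{0}}{2^{1+\beta_{2}}}-\frac{D_{1}\epsilon_{0}^{(1+\beta_{1})/(\beta_{1}-\beta_{2})}}{E^{1+\beta_{2}}(0)}>0$. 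Solving as in part~1, $w$ reaches $0$—hence $E$ explodes—no later than the time at which $\int_{0}^{t}g(s)e^{\beta_{2}a s}\,ds=(\beta_{2}E^{\beta_{2}}(0)\,c)^{-1}$, which is exactly $\tau^{\ast}_{2}$ in \eqref{ST2}. The main obstacle is therefore the bookkeeping of part~2: establishing the Young inequality with the stated $D_{1}$ and, above all, controlling the additive constant via \eqref{nca3}–\eqref{nca4} and the monotonicity $E(t)\ge E(0)$, which is immediate when $a\ge 0$ and is the one place where the sign of the linear term must be handled with care.
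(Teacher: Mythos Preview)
Your proposal is correct and follows essentially the same approach as the paper: the convexity (power-mean) inequality for Case~1, the Young-type lower bound $h_{2}^{1+\beta_{1}}\ge \epsilon_{0}h_{2}^{1+\beta_{2}}-D_{1}\epsilon_{0}^{(1+\beta_{1})/(\beta_{1}-\beta_{2})}$ followed by convexity for Case~2, and in both cases the reduction to a Bernoulli-type scalar ODE solved via the substitution $w=z^{-\beta}$ (resp.\ $w=E^{-\beta_{2}}$) together with an ODE comparison argument. Your observation that the monotonicity $E(t)\ge E(0)$ in Case~2 is straightforward when $a=-\lambda+\gamma-k^{2}\ge 0$ but needs care otherwise is a fair caveat; the paper simply asserts this from \eqref{nca4} without discussing the sign of $a$, so you are in fact being slightly more scrupulous here than the original.
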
	
 
    \begin{Rem}
    Note that \eqref{nca4}  follows from the condition $\int_{D} [f_{1}(x)+f_2(x)] \psi (x)dx > 2 \epsilon_{0}^{\frac{1}{\beta_{1}-\beta_{2}}} D_{1}^{1/1+\beta_{2}}.$
    \end{Rem}
   
  \begin{proof}[Proof of Theorem \ref{thm4.1}] 
  	\textbf{Case 1:} Suppose that $\beta_{1}=\beta_{2}=\beta>0$ in (\ref{a4}) and \eqref{e1}. Then, we get
  	\begin{eqnarray} \label{a7}
  	\frac{dE(t)}{dt}\geq (-\lambda+\gamma-k^{2})E(t)+2\exp\{{\textcolor{red}{\rho_{1}} W(t)+\textcolor{red}{\rho_{2}} B^{H}(t)}\} \left[ h_{1}^{1+\beta}(t)+h_{2}^{1+\beta}(t) \right].
  	\end{eqnarray}
  	By substituting $a=1, \ b=\frac{h_{1}}{h_{2}},\ n=1+\beta$ into the inequality 
  	\begin{eqnarray}\label{cta1}
  	a^{n}+b^{n}\geq 2^{-n}{(a+b)}^{n},
  	\end{eqnarray}
  	which is valid for $a, \ b\geq0,$ we obtain 
  	\begin{eqnarray}
  	&&h_{1}^{1+\beta}(t)+h_{2}^{1+\beta}(t) \geq 2^{-(1+\beta)} \left( h_{1}(t)+h_{2}(t) \right)^{1+\beta} \geq 2^{-(1+\beta)} E^{1+\beta}(t).\nonumber
  	\end{eqnarray}
  	From $\left( \ref{a7} \right) $, we infer that 
  	%\begin{align*}
  	%\frac{dE(t)}{dt}\geq 2^{-m} \exp\{{\rho W_{t}}\} \left[ h_{1}(t)+h_{2}(t) \right]^{1+m},
  	%\end{align*}
  	%and 
  	\begin{align*}
  	\frac{dE(t)}{dt}\geq(-\lambda+\gamma-k^{2})E(t)+ 2^{-\beta} \exp\{{\textcolor{red}{\rho_{1}} W(t)+\textcolor{red}{\rho_{2}} B^{H}(t)}\} E^{1+\beta}(t).\nonumber
  	\end{align*} 
  	Thus, by comparison argument (see \cite[Theorem 1.3]{teschl} and \cite[Appendix]{smk2}), we have $E(t)$ blows up not later than the solution $I(t)$ of the differential equation
  	\begin{equation}
  		\left\{
  	\begin{aligned}
  	\frac{dI(t)}{dt}&=(-\lambda+\gamma-k^{2})I(t)+ 2^{-\beta} \exp\{{\textcolor{red}{\rho_{1}} W(t)+\textcolor{red}{\rho_{2}} B^{H}(t)}\} I(t)^{1+\beta}, \\  I(0)&=E(0),\nonumber
  	\end{aligned}
  \right.
\end{equation}
  	which immediately  gives 
  	\begin{align}
  	I(t)=e^{(\lambda-\gamma+k^{2})t}\left\lbrace E(0)^{-\beta}- 2^{-\beta}\beta\int_{0}^{t} \exp\{{\textcolor{red}{\rho_{1}} W(t)+\textcolor{red}{\rho_{2}} B^{H}(t)+\beta(-\lambda+\gamma-k^{2})s}\} ds\right\rbrace^{-\frac{1}{\beta}}. \nonumber
  	\end{align} 
  	From  the above equation, the explosion time is given by
  	\begin{align*}
  	\tau^{\ast}_{1} = \inf \left\lbrace t\geq 0 : \int_{0}^{t} \exp\{{\textcolor{red}{\rho_{1}} W(t)+\textcolor{red}{\rho_{2}} B^{H}(t)+\beta(-\lambda+\gamma-k^{2})s}\} ds\geq 2^{\beta} {\beta}^{-1}E^{-\beta}(0) \right\rbrace. 
  	\end{align*}
   
    \vskip 0.2 cm
    \noindent 
    \textbf{Case 2:} Suppose $\beta_{1}>\beta_{2}>0$ in  \eqref{e1},  then we have 
    \begin{align} \label{nca1}
    \frac{dE(t)}{dt}&\geq(-\lambda+\gamma-k^{2})E(t)+  \exp\{\rho_{1} W(t)+\rho_{2} B^{H}(t)\}  \left[ h_{1}^{1+\beta_{2}}(t)+h_{2}^{1+\beta_{1}}(t) \right].
   \end{align}
    The Young inequality states that if $1<b<\infty$, $\delta>0$, and $a=\frac{b}{b-1}$, then 
    \begin{eqnarray} \label{na10}
    xy\leq \frac{\delta^{a}x^{a}}{a}+\frac{\delta^{-b}y^{b}}{b}, \ x,y\geq 0. 
    \end{eqnarray}
    By setting $b=\frac{1+\beta_{1}}{1+\beta_{2}}, \ y=h_{2}^{1+\beta_{2}}(t), \ x=\epsilon, \ \delta=\left( \frac{1+\beta_{1}}{1+\beta_{2}} \right)^{\frac{1+\beta_{2}}{1+\beta_{1}}},$ and using the fact that $\beta_{2}<\beta_{1}$ in (\ref{na10}), it follows that for any $\epsilon>0,$
    \begin{eqnarray} 
    h_{2}^{1+\beta_{1}}(t) \geq \epsilon h_{2}^{1+\beta_{2}}(t)-D_{1}\epsilon^{\frac{1+\beta_{1}}{\beta_{1}-\beta_{2}}}\nonumber.
    \end{eqnarray}
    Since $\epsilon_0$ is the minimum of the quantities given in  \eqref{nca3},  in particular, we have $\epsilon_0\leq ( h_{\textcolor{red}{2}}(0)/D_{1}^{1/1+\beta_{2}})^{\beta_{1}-\beta_{2}},$ so that 
    \begin{eqnarray}
    \epsilon_{0} h_{2}^{1+\beta_{2}}(0)-D_{1}\epsilon_{0}^{\frac{1+\beta_{1}}{\beta_{1}-\beta_{2}}}\geq 0\nonumber.
    \end{eqnarray}
    From  \eqref{nca1},  we have
    \begin{align}\label{nca2}
    \frac{dE(t)}{dt}\geq(-\lambda+\gamma-k^{2})E(t)+ \exp\{\rho_{1} W(t)+\rho_{2} B^{H}(t)\} \left[ h_{1}^{1+\beta_{2}}(t)+\epsilon_{0} h_{2}^{1+\beta_{2}}(t)-D_{1}\epsilon_{0}^{\frac{1+\beta_{1}}{\beta_{1}-\beta_{2}}} \right].
    \end{align}
   	Utilizing the inequality  \eqref{cta1}  again with $n=1+\beta_{2}, \ a=1$ and $b=\epsilon_{0}^\frac{1}{1+\beta_{2}}\frac{h_{1}(t)}{h_{2}(t)}$, and by using  \eqref{nca3},  we see that 
   	\begin{eqnarray}
   	h_{\textcolor{red}{1}}^{1+\beta_{2}}(t) + \epsilon_{0} h_{\textcolor{red}{2}}^{1+\beta_{2}}(t) \geq 2^{-(1+\beta_{2})} \left( h_{\textcolor{red}{1}}(t)+\epsilon_{0}^\frac{1}{1+\beta_{2}}h_{\textcolor{red}{2}}(t) \right)^{1+\beta_{2}}\geq  2^{-(1+\beta_{2})}\epsilon_{0}E^{1+\beta_{2}}(t)\nonumber.\end{eqnarray}
   	From  \eqref{nca2},  we deduce that
   \begin{align}
   \frac{dE(t)}{dt}\geq(-\lambda+\gamma-k^{2})E(t)+ \exp\{\rho_{1} W(t)+\rho_{2} B^{H}(t)\} \left[2^{-(1+\beta_{2})}\epsilon_{0}E^{1+\beta_{2}}(t)-D_{1}\epsilon_{0}^{\frac{1+\beta_{1}}{\beta_{1}-\beta_{2}}} \right].\nonumber
   \end{align}
   Note that the condition  \eqref{nca4}  gives $E(t)\geq E(0)>0,$ and so 
   \begin{align}
   \frac{dE(t)}{E^{1+\beta_{2}}(t)}\geq\frac{(-\lambda+\gamma-k^{2})}{E^{\beta_{2}}(t)}dt+\exp\{\rho_{1} W(t)+\rho_{2} B^{H}(t)\} \left[\frac{\epsilon_{0}}{2^{1+\beta_{2}}}-\frac{D_{1}\epsilon_{0}^{\frac{1+\beta_{1}}{\beta_{1}-\beta_{2}}}}{E^{1+\beta_{2}}(0)} \right]dt.\nonumber
   \end{align}
   Thus, by comparison argument (see \cite[Theorem 1.3]{teschl} and \cite[Appendix]{smk2}) $E(t) \geq I(t),$ for all $t \geq 0,$ where $I(t)$ solves the equation
   \begin{equation}
   	\left\{
   \begin{aligned}
   \frac{dI(t)}{I^{1+\beta_{2}}(t)}& = \frac{(-\lambda+\gamma-k^{2})}{I^{\beta_{2}}(t)}dt+\exp\{\rho_{1} W(t)+\rho_{2} B^{H}(t)\} \left[\frac{\epsilon_{0}}{2^{1+\beta_{2}}}-\frac{D_{1}\epsilon_{0}^{\frac{1+\beta_{1}}{\beta_{1}-\beta_{2}}}}{I^{1+\beta_{2}}(0)} \right]dt, \\ I(0)&=E(0),\nonumber
   \end{aligned}
\right.
\end{equation}
   and
   \begin{align*}
   I(t)= e^{(\lambda-\gamma+k^{2})t}\left\lbrace E^{-\beta_{2}}(0)-\beta_{2}\left[ \frac{\epsilon_0}{2^{1+\beta_{2}}}-\frac{ \epsilon_{0}^{\frac{1+\beta_{1}}{\beta_{1}-\beta_{2}}}D_{1} }{E^{1+\beta_{2}}(0)} \right] \displaystyle\int_{0}^{t} e^{\rho_{1} W(s)+\rho_{2} B^{H}(s)+\beta_{2}(-\lambda+\gamma-k^{2})s}ds \right\rbrace^{\frac{1}{-\beta_{2}}}.
   \end{align*}
   From the above equality, the explosion time is given by
   \begin{eqnarray}
   \tau^{\ast}_{2} = \inf \left\lbrace  t\geq 0 : \int_{0}^{t} e^{\rho_{1} W(s)+\rho_{2} B^{H}(s)+\beta_{2}(-\lambda+\gamma-k^{2})s}ds \geq {\left[\beta_{2} E^{\beta_{2}}(0)\left(\frac{\epsilon_0}{2^{1+\beta_{2}}}-\frac{ \epsilon_{0}^{\frac{1+\beta_{1}}{\beta_{1}-\beta_{2}}}D_{1}}{E^{1+\beta_{2}}(0)}\right)\right]^{-1}}\right\rbrace, \nonumber
   \end{eqnarray}
   which completes the proof. 
   \end{proof}

   \section{A More General Case}\label{sec5}
   In this section, we consider the system \eqref{b1}-\eqref{b2} with  the assumption that $\beta_{1}\geq \beta_{2}>0$, for any initial values $f=(f_{1}, f_{2})^{\top}$ and $\frac{1}{2}<H<1$. From (\ref{a2}), we have 
   \begin{align*}
   v_{i}(t,x)&=\exp\left\lbrace {\left( \gamma_i-\frac{k_{i1}^{2}}{2}\right)  t}\right\rbrace  S_{t}f_{i}(x)\nonumber\\ &\quad+\int_{0}^{t} e^{\left( (1+\beta_{i})k_{j1}-k_{i1}\right) W(r)+((1+\beta_{i})k_{j2}-k_{i2})B^{H}(r)+\gamma_i(t-r)} S_{t-r}(v_{j}(r,\cdot))^{1+\beta_{i}}(x) dr. \nonumber
   \end{align*}
	for $i=1,2, \ j\in \left\{ 1,2 \right\} / \left\{i\right\}.$
   \begin{theorem}\label{thm5.1}
  	Assume that $\beta_{1}\geq \beta_{2}>0,\ 	\gamma_{i}=\lambda+\frac{k_{i1}^{2}}{2},\ i=1,2$ and let $$f_{1}=C_{1}\psi \mbox{ and } f_{2}=C_{2}\psi,$$ for some positive constant $C_{1}$ and $C_{2}$ with $C_{1} \leq C_{2}.$ Then $\tau_{\ast \ast} \leq \tau,$ where $\tau_{\ast \ast}$ be given by 
  	\begin{align}
  	\tau_{**}= \inf \Bigg\{ t\geq 0 : \int^{t}_{0}  \exp\{((1+\beta_{1})k_{21}-k_{11})W(r)+&((1+\beta_{1})k_{22}-k_{12})B^{H}(r)\} dr\nonumber\\ 
  	&\geq \frac{1}{\beta_{1}C^{\beta_{1}}_{1}\left\|\psi \right\|_{\infty}^{\beta_{1}}}\nonumber\\ \ \mbox{or} \
  	\int^{t}_{0}  \exp\{((1+\beta_{2})k_{11}-k_{21})W(r)+&((1+\beta_{2})k_{12}-k_{22})B^{H}(r)\} dr\nonumber\\ 
  	&\geq  \frac{1}{\beta_{2}C^{\beta_{2}}_{2}\left\|\psi \right\|_{\infty}^{\beta_{2}}}\nonumber
  	\Bigg\}. \nonumber
  	\end{align}
  	\begin{proof}
  		For $i=1,2$, we define
  		\begin{align*}
  		\mathcal{T}_{i}v(t,x) &= \exp\{{\lambda t}\} S_{t}f_{i}(x)\nonumber\\
  		&\quad+\int_{0}^{t} e^{((1+\beta_{i})k_{j1}-k_{i1})W(r)+((1+\beta_{i})k_{j2}-k_{i2})B^{H}(r) +\gamma_{i} (t-r)} (S_{t-r} v(r,x))^{1+\beta_{i}} dr,
  		\end{align*}
  		and
  		$$\mathscr{G}_{i}(t)= \left[ 1- \beta_{i} \int_{0}^{t} \exp\{{((1+\beta_{i})k_{j1}-k_{i1}) W(r)+((1+\beta_{i})k_{j2}-k_{i2}) B^{H}(r)}\} \left\|e^{\lambda r} S_{r}f_{i} \right\|^{\beta_{i}}_{\infty} dr \right]^{\frac{-1}{\beta_{i}}}.$$
  		Proceeding in the same way as in the proof of Theorem $\ref{t2}$, we get  
  		\begin{eqnarray}
  		v_{1}(t,x) = \mathcal{T}_{1}v_{2}(t,x), \quad v_{2}(t,x) = \mathcal{T}_{2}v_{1}(t,x) ,\nonumber
  		\end{eqnarray}
  		whenever $t\leq \tau_{**},$ and $x\in D$. Moreover 
  		\begin{align}
  		v_{i}(t,x) \leq& \frac{\textcolor{red}{\exp\{{\lambda t}\} S_{t}f_{i}(x)}}{\left[ 1-\beta_{i} \int_{0}^{t} \exp\{{((1+\beta_{i})k_{j1}-k_{i1}) W(r)+((1+\beta_{i})k_{j2}-k_{i2}) B^{H}(r)}\} \left\| e^{\lambda r}S_{r}f_{i} \right\|^{\beta_{i}}_{\infty} dr  \right]^{\frac{1}{\beta_{i}}}} \nonumber\\
  		=& \frac{\textcolor{red}{C_{i}\psi(x)}}{\left[ 1- \beta_{i} C^{\beta_{i}}_{i} \left\|\psi \right\|^{\beta_{i}}_{\infty} \displaystyle\int_{0}^{t} \exp\{((1+\beta_{i})k_{j1}-k_{i1}) W(r)+((1+\beta_{i})k_{j2}-k_{i2}) B^{H}(r)\}  dr \right]^{\frac{1}{\beta_{i}}}}, \nonumber
  		\end{align}
  		%where $\left\lbrace j \right\rbrace=\left\lbrace1,2\right\rbrace/\{i\} \ \ i=1,2$,
  		by the choice of $f_{1}$ and $f_{2}$.
  	\end{proof}
  \end{theorem}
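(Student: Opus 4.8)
The plan is to adapt the monotone-iteration scheme from the proof of Theorem~\ref{t2} to the present setting, in which the symmetry condition~\eqref{a4} is dropped. The essential new feature is that the two coupled equations are now driven by \emph{two different} exponential functionals: component~$1$ carries the weight with the factor $(1+\beta_1)$ and component~$2$ the weight with the factor $(1+\beta_2)$. First I would start from the mild-solution representation of $v_i$ displayed immediately before the statement, and use the hypothesis $\gamma_i=\lambda+\frac{k_{i1}^2}{2}$ to reduce the linear prefactor $\exp\{(\gamma_i-\frac{k_{i1}^2}{2})t\}S_t f_i$ to $\exp\{\lambda t\}S_t f_i$; this is exactly the leading term of the operators $\mathcal T_i$ introduced in the statement.

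Second, I would record that $\mathscr G_i$ solves the integral identity
\[
\mathscr G_i(t)=1+\int_0^t \exp\{((1+\beta_i)k_{j1}-k_{i1})W(r)+((1+\beta_i)k_{j2}-k_{i2})B^H(r)\}\,\bigl\|e^{\lambda r}S_r f_i\bigr\|_\infty^{\beta_i}\,\mathscr G_i^{1+\beta_i}(r)\,dr,
\]
obtained by differentiating its closed form, and then establish the two-sided estimate $\exp\{\lambda t\}S_t f_i(x)\le \mathcal T_i v(t,x)\le \exp\{\lambda t\}S_t f_i(x)\,\mathscr G_i(t)$ for every $v$ with $0\le v(t,x)\le \exp\{\lambda t\}S_t f_i(x)\mathscr G_i(t)$ and $t<\tau_{**}$. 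The upper bound is the analytic core: inserting the bound on $v$, using positivity of the semigroup and the identity $S_{t-r}S_r=S_t$, and estimating the $\beta_i$-th power factor by $S_t f_i\le \|S_r f_i\|_\infty$, one recovers precisely the integrand of the identity above and so closes the inequality.

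Third, with these bounds in hand I would run the iteration $u_1^{(0)}=\exp\{\lambda t\}S_t f_1$, $u_2^{(0)}=\frac{C_1}{C_2}\exp\{\lambda t\}S_t f_2$, $u_i^{(n)}=\mathcal T_i u_j^{(n-1)}$, prove that each sequence is increasing by the monotonicity of $\mathcal T_i$, and pass to the limit by monotone convergence to obtain $v_1=\mathcal T_1 v_2$, $v_2=\mathcal T_2 v_1$ on $[0,\tau_{**})$ together with $v_i\le \exp\{\lambda t\}S_t f_i\,\mathscr G_i(t)$. Finally, specialising to $f_i=C_i\psi$ and using $S_r\psi=\exp\{-\lambda r\}\psi$ gives $\|e^{\lambda r}S_r f_i\|_\infty^{\beta_i}=C_i^{\beta_i}\|\psi\|_\infty^{\beta_i}$, which makes the bracket defining $\mathscr G_i$ explicit and shows that $v_i$ can explode only when that bracket vanishes; the earliest such time over $i\in\{1,2\}$ is precisely $\tau_{**}$, whence $\tau_{**}\le\tau$.

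The step I expect to be the main obstacle is the consistent bookkeeping of the two distinct exponential weights through the cross-coupled iteration. In Theorem~\ref{t2} both $\mathcal T_1$ and $\mathcal T_2$ carried the \emph{same} functional $\int_0^t e^{\rho_1 W(r)+\rho_2 B^H(r)}dr$, so monotonicity and the self-consistent bound closed almost for free; here one must verify that the estimate $\mathcal T_i v\le \exp\{\lambda t\}S_t f_i\mathscr G_i$ still closes with the index $i$ on $\mathscr G$ matching that on $\mathcal T$ and $f_i$, even though $\mathcal T_i$ is fed by the other component $u_j$. The hypothesis $C_1\le C_2$ and the normalisation $u_2^{(0)}=\frac{C_1}{C_2}\exp\{\lambda t\}S_t f_2=\exp\{\lambda t\}S_t f_1$ are exactly what make the base case $u_i^{(0)}\le u_i^{(1)}$ of the monotonicity induction hold, so I would check those ordering inequalities with particular care.
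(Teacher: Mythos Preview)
Your proposal is correct and follows essentially the same approach as the paper's proof: define the operators $\mathcal T_i$ and the auxiliary functions $\mathscr G_i$ with the two distinct exponential weights, then repeat the monotone-iteration argument of Theorem~\ref{t2} to obtain $v_i=\mathcal T_i v_j$ and the bound $v_i\le e^{\lambda t}S_t f_i\,\mathscr G_i$, finally specialising via $S_r\psi=e^{-\lambda r}\psi$. The paper's proof is terse (it simply says ``proceeding in the same way as in the proof of Theorem~\ref{t2}''), whereas you have spelled out the steps and, in particular, correctly flagged the cross-coupling bookkeeping and the role of $C_1\le C_2$ in the base case as the place where care is required.
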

  \begin{corollary}\label{cor5.1}
  	Let the random time $\tau'$ be defined by 
  	\begin{align}
  	\tau'=\inf \Bigg\{ t\geq 0 : \int_{0}^{t} &\max\Big\{\displaystyle \exp\{((1+\beta_{i})k_{21}-k_{11})W(r)+((1+\beta_{i})k_{11}-k_{21})B^{H}(r)\},\nonumber\\&\qquad \displaystyle \exp\{((1+\beta_{i})k_{22}-k_{12})W(r)+((1+\beta_{i})k_{12}-k_{22})B^{H}(r)\} \Big\} dr \nonumber\\
  	&\geq \min \Bigg\{ \frac{1}{\beta_{1}C^{\beta_{1}}_{1}\left\|\psi \right\|_{\infty}^{\beta_{1}}},\frac{1}{\beta_{2}C^{\beta_{2}}_{2}\left\|\psi \right\|_{\infty}^{\beta_{2}}} \Bigg\} \Bigg\}. \nonumber
  	\end{align}
  	Then $\tau'\leq \tau_{\ast \ast}$.
  \end{corollary}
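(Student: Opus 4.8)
The plan is to recognize both $\tau_{**}$ and $\tau'$ as first-passage times of nondecreasing additive functionals and to deduce $\tau'\le\tau_{**}$ from a pointwise domination of integrands combined with a comparison of the two threshold levels. First I would name the two exponential integrands and thresholds occurring in the definition of $\tau_{**}$ from Theorem \ref{thm5.1}: set $g_{1}(r)=\exp\{((1+\beta_{1})k_{21}-k_{11})W(r)+((1+\beta_{1})k_{22}-k_{12})B^{H}(r)\}$ and $g_{2}(r)=\exp\{((1+\beta_{2})k_{11}-k_{21})W(r)+((1+\beta_{2})k_{12}-k_{22})B^{H}(r)\}$, with levels $M_{1}=(\beta_{1}C_{1}^{\beta_{1}}\|\psi\|_{\infty}^{\beta_{1}})^{-1}$ and $M_{2}=(\beta_{2}C_{2}^{\beta_{2}}\|\psi\|_{\infty}^{\beta_{2}})^{-1}$, so that $\tau_{**}=\inf\{t\ge0:\int_{0}^{t}g_{1}(r)\,dr\ge M_{1}\text{ or }\int_{0}^{t}g_{2}(r)\,dr\ge M_{2}\}$. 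In the same notation the random time of the corollary is the first-passage time $\tau'=\inf\{t\ge0:\int_{0}^{t}G(r)\,dr\ge m\}$ of the single functional with integrand $G(r)=\max\{g_{1}(r),g_{2}(r)\}$ against the reduced level $m=\min\{M_{1},M_{2}\}$.

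Since $g_{1},g_{2}$ are strictly positive and a.s.\ continuous in $r$, each of $t\mapsto\int_{0}^{t}g_{i}(r)\,dr$ and $t\mapsto\int_{0}^{t}G(r)\,dr$ is a.s.\ continuous and nondecreasing, so these infima are genuine hitting times and the underlying level sets are half-lines in $t$. The key step is the implication, valid for every $t\ge0$ and $\mathbb{P}$-a.e.\ $\omega$: if $\int_{0}^{t}g_{1}\,dr\ge M_{1}$ or $\int_{0}^{t}g_{2}\,dr\ge M_{2}$, then $\int_{0}^{t}G\,dr\ge m$. Indeed, from $G\ge g_{i}$ pointwise we get $\int_{0}^{t}G\,dr\ge\int_{0}^{t}g_{i}\,dr$, and since $m\le M_{i}$, in whichever branch holds we obtain $\int_{0}^{t}G\,dr\ge\int_{0}^{t}g_{i}\,dr\ge M_{i}\ge m$. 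Hence the defining event of $\tau'$ contains that of $\tau_{**}$ at every time $t$, i.e.\ $\{t:\int_{0}^{t}G\,dr\ge m\}\supseteq\{t:\int_{0}^{t}g_{1}\,dr\ge M_{1}\text{ or }\int_{0}^{t}g_{2}\,dr\ge M_{2}\}$. Taking the infimum over the larger set can only decrease its value, which yields $\tau'\le\tau_{**}$ and closes the argument.

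I do not anticipate any genuine analytic obstacle here, since the entire content is the monotonicity-and-inclusion principle for first-passage times of increasing functionals. The only point requiring care is bookkeeping: one must verify that the two integrands whose maximum defines $\tau'$ are precisely $g_{1}$ and $g_{2}$, so that the pointwise bound $G\ge g_{i}$ holds coefficient by coefficient in the multipliers of $W(r)$ and $B^{H}(r)$, and that the level appearing in $\tau'$ is exactly $\min\{M_{1},M_{2}\}$. Both are immediate from the definitions once the integrands are matched term by term, and no probabilistic input beyond the a.s.\ continuity and positivity of $r\mapsto g_{i}(r)$ is needed.
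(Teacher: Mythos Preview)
Your argument is correct and is exactly the natural one: dominate each integrand $g_i$ by $G=\max\{g_1,g_2\}$, lower the threshold to $m=\min\{M_1,M_2\}$, and conclude by the inclusion of hitting sets for nondecreasing continuous functionals. The paper does not supply a separate proof of this corollary (it is stated immediately after Theorem~\ref{thm5.1} as a direct consequence), so there is nothing substantive to compare; your write-up simply makes explicit the one-line first-passage comparison the authors leave to the reader. Your caveat about bookkeeping is well placed: as printed, the exponents in the corollary's integrands do not literally coincide with those of $g_1,g_2$ in Theorem~\ref{thm5.1} (there is a dangling index $i$ and the $B^H$ coefficients are permuted), so the identification $G=\max\{g_1,g_2\}$ relies on reading the corollary as intended rather than as typeset.
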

  To estimate upper bounds for $\tau,$ when $\beta_{1}\geq\beta_{2}>0,$ we first notice that a sub-solution of equation (\ref{a5}) is given by   
  \begin{equation}\label{ab1} 
  \left\{
  \begin{aligned}   \frac{dh_{1}(t)}{dt}&=(-\lambda+\gamma_{1}-k^{2})h_{1}(t)\\&\quad+\exp\{{((1+\beta_{1})k_{21}-k_{11})W(r)+((1+\beta_{1})k_{22}-k_{12})B^{H}(r)}\} h_{2}^{1+\beta_{1}}(t),\\ 
  \frac{dh_{2}(t)}{dt}&=(-\lambda+\gamma_{2}-k^{2})h_{2}(t)\\&\quad+\exp\{{((1+\beta_{2})k_{11}-k_{21})W(r)+((1+\beta_{2})k_{12}-k_{22})B^{H}(r)}\} h_{1}^{1+\beta_{2}}(t),\\
  h_{i}(0)&=v_{i}(0,\psi), \ i=1,2.  
  \end{aligned}
  \right.
  \end{equation}
  Implementing the same idea as in the proof of Theorem \ref{thm4.1} to the system (\ref{ab1}) with $\gamma=\min\{\gamma_{1}, \gamma_{2}\},$ $k^{2}=\max\left\lbrace  \frac{k_{11}^{2}}{2}, \frac{k_{21}^{2}}{2} \right\rbrace$ and if $\beta_{1}=\beta_{2}=\beta \ \mbox{(say)} >0$, we deduce that 
  \begin{align}
  \frac{dE(t)}{dt}&\geq (-\lambda+\gamma-k^{2})E(t)+ \mbox{min} \Big\{  \exp\{{((1+\beta)k_{21}-k_{11}) W(t)+((1+\beta)k_{22}-k_{12}) B^{H}(t)}\},\nonumber\\  &\qquad\qquad \exp\{{((1+\beta)k_{11}-k_{21}) W(t)+((1+\beta)k_{12}-k_{22}) B^{H}(t)}\} \Big\} 2^{-\beta} E^{1+\beta}(t). \nonumber
  \end{align} 
  If  $\beta_{1}>\beta_{2}>0,$ we obtain
  \begin{align}
  \frac{dE(t)}{E^{1+\beta_{2}}(t)}&\geq \frac{(-\lambda+\gamma-k^{2})}{E^{\beta_{2}}(t)}dt+ \mbox{min} \Big\{  \exp\{{((1+\beta_{1})k_{21}-k_{11}) W(t)+((1+\beta_{1})k_{22}-k_{12}) B^{H}(t)}\},\nonumber\\  &\qquad \exp\{{((1+\beta_{2})k_{11}-k_{21}) W(t)+((1+\beta_{2})k_{12}-k_{22}) B^{H}(t)}\} \Big\} \nonumber\\ &\qquad \times \left[\frac{\epsilon_{0}}{2^{1+\beta_{2}}}-\frac{D_{1}\epsilon_{0}^{\frac{1+\beta_{1}}{\beta_{1}-\beta_{2}}}}{E^{1+\beta_{2}}(0)} \right]dt,\nonumber
  \end{align} 
  which also follows from the proof of Theorem \ref{thm4.1}. In this manner we obtain an upper bound for the explosion time to the solution of the system \eqref{b1}-\eqref{b2} as follows:
  \begin{theorem}\label{thm5.2}
  	Suppose that $\beta_{1}\geq\beta_{2}>0$ and for any initial values $f=(f_{1}, f_{2})^{\top}$, we have the following results:
  	\item [1.] If $\beta_{1}=\beta_{2}=\beta \ \mbox{(say)}$ then $\tau \leq \tau^{\ast \ast}_{1}$, where 
  	\begin{align}
  	\tau^{\ast \ast}_{1}&= \inf \Bigg\{ t\geq 0 : \int_{0}^{t} \min \{e^{((1+\beta)k_{21}-k_{11}) B^{H}_{1}(s)+((1+\beta)k_{11}-k_{21}) B^{H}_{2}(s)+\beta(-\lambda+\gamma-k^{2})s},\nonumber\\  &\quad e^{((1+\beta)k_{22}-k_{12}) B^{H}_{1}(s)+((1+\beta)k_{12}-k_{22}) B^{H}_{2}(s)+\beta(-\lambda+\gamma-k^{2})s}\} ds\geq 2^{\beta} {\beta}^{-1}E^{-\beta}(0) \Bigg\}. \nonumber 
  	\end{align}  
  	\item [2.] 	If $\beta_{1}>\beta_{2}>0$ and let $D_{1}=\left( \frac{\beta_{1}-\beta_{2}}{1+\beta_{1}} \right)\left(\frac{1+\beta_{1}}{1+\beta_{2}}\right)^{\frac{1+\beta_{2}}{\beta_{1}-\beta_{2}}},$
  	\begin{align*}
  	\epsilon_{0}&\leq \min \Bigg\{ 1, \left( h_{2}(0)/D_{1}^{1/1+\beta_{2}} \right)^{\beta_{1}-\beta_{2}} \Bigg\}.
  	\end{align*} 
  	Assume that 
  	\begin{align*}
  	2^{-(1+\beta_{2})}\epsilon_{0}E^{1+\beta_{2}}(0)\geq \epsilon_{0}^{\frac{1+\beta_{1}}{\beta_{1}-\beta_{2}}}D_{1}.
  	\end{align*} 
  	Then $\tau \leq \tau^{\ast \ast}_{2},$ where 
  	\begin{align}
  	\tau^{\ast \ast}_{2} = \inf \Bigg\{ t\geq 0 : &\int_{0}^{t} \min \{e^{((1+\beta_{1})k_{21}-k_{11}) B^{H}_{1}(s)+((1+\beta_{1})k_{22}-k_{12}) B^{H}_{2}(s)+\beta_{2}(-\lambda+\gamma-k^{2})s},\nonumber\\  &\quad e^{((1+\beta_{2})k_{11}-k_{21}) B^{H}_{1}(s)+((1+\beta_{2})k_{12}-k_{22}) B^{H}_{2}(s)+\beta_{2}(-\lambda+\gamma-k^{2})s} \} ds \nonumber\\&\qquad\left.\geq {\left[\beta_{2} E^{\beta_{2}}(0)\left(\frac{\epsilon_0}{2^{1+\beta_{2}}}-\frac{ \epsilon_{0}^{\frac{1+\beta_{1}}{\beta_{1}-\beta_{2}}}D_{1}}{E^{1+\beta_{2}}(0)}\right)\right]^{-1}} \right\}.  \nonumber
  	\end{align} 
  	where  $\gamma=\min\{\gamma_{1}, \gamma_{2}\},$ $k^{2}=\max\left\lbrace  \frac{k_{11}^{2}}{2}, \frac{k_{21}^{2}}{2} \right\rbrace$ and $E(0)=\int_{D}[f_{1}(x)+f_{2}(x)]\psi(x)dx.$
  \end{theorem}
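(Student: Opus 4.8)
The plan is to transplant the comparison-and-quadrature scheme from the proof of Theorem \ref{thm4.1} onto the two differential inequalities for $E(t)=h_1(t)+h_2(t)$ that are recorded immediately before the statement; the only genuinely new feature, once the structural assumption \eqref{a4} is dropped, is that the two stochastic weights multiplying the nonlinearities in the sub-solution system \eqref{ab1} no longer coincide and must be merged by passing to their pointwise minimum.

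First I would invoke the sub-solution bound $v_i(t,\psi)\geq h_i(t)\geq 0$ coming from \eqref{ab1}, add the two equations, and estimate the linear part from below using $\gamma=\min\{\gamma_1,\gamma_2\}$ and $k^2=\max\{k_{11}^2/2,k_{21}^2/2\}$. The decisive step is to replace each of the two distinct exponential weights by their minimum $M(t)$, so that the nonlinear contribution is bounded below by $M(t)\,[h_1^{1+\beta_2}+h_2^{1+\beta_1}]$; this is precisely the pair of inequalities displayed just before the theorem. Since $M(t)$ is everywhere strictly positive, this substitution preserves the sub-solution property while collapsing the two weights into a single coefficient.

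From here the argument branches exactly as in Theorem \ref{thm4.1}. When $\beta_1=\beta_2=\beta$ I would apply the elementary inequality \eqref{cta1} with $a=1$, $b=h_1/h_2$, $n=1+\beta$ to dominate $h_1^{1+\beta}+h_2^{1+\beta}$ from below by a constant multiple of $E^{1+\beta}$, yielding $\frac{dE}{dt}\geq(-\lambda+\gamma-k^2)E+c_\beta\,M(t)\,E^{1+\beta}$. When $\beta_1>\beta_2$ I would first use the Young inequality \eqref{na10} with $b=(1+\beta_1)/(1+\beta_2)$ to write $h_2^{1+\beta_1}\geq\epsilon_0 h_2^{1+\beta_2}-D_1\epsilon_0^{(1+\beta_1)/(\beta_1-\beta_2)}$, then apply \eqref{cta1} once more to the pair $(h_1,\epsilon_0^{1/(1+\beta_2)}h_2)$; the choice \eqref{nca3} and the hypothesis \eqref{nca4} guarantee $E(t)\geq E(0)>0$ and allow the $E^{1+\beta_2}(0)$ replacement in the correction term, giving $\frac{dE}{dt}\geq(-\lambda+\gamma-k^2)E+M(t)\big[2^{-(1+\beta_2)}\epsilon_0 E^{1+\beta_2}-D_1\epsilon_0^{(1+\beta_1)/(\beta_1-\beta_2)}\big]$.

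The concluding step is the scalar comparison: by \cite[Theorem 1.3]{teschl} (cf. \cite[Appendix]{smk2}) one has $E(t)\geq I(t)$ for the solution $I$ of the corresponding scalar ODE with $I(0)=E(0)$. Substituting $I=e^{(\lambda-\gamma+k^2)t}J$ linearises the equation for $J^{-\beta}$ (resp. $J^{-\beta_2}$), so it integrates explicitly, and $I$ --- hence $E$ --- must explode at the first time the quadrature $\int_0^t M(s)\,e^{\beta(-\lambda+\gamma-k^2)s}\,ds$ reaches the stated threshold, which delivers $\tau^{**}_1$ and $\tau^{**}_2$. I expect the main obstacle to be bookkeeping rather than analysis: one must check that taking the minimum of the two random weights is legitimate uniformly in $t$, that the comparison ODE stays well posed up to its blow-up time, and, in the case $\beta_1>\beta_2$, that the correction term $D_1\epsilon_0^{(1+\beta_1)/(\beta_1-\beta_2)}$ does not destroy the strict positivity of the coefficient of $E^{1+\beta_2}$ that the comparison step requires.
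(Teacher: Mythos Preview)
Your proposal is correct and follows essentially the same route as the paper: the paper's argument consists precisely of taking the sub-solution system \eqref{ab1}, bounding the two distinct exponential weights from below by their pointwise minimum, and then re-running the inequalities \eqref{cta1} and \eqref{na10} together with the comparison argument exactly as in Theorem \ref{thm4.1}. The paper in fact records the two differential inequalities for $E(t)$ just before the statement and simply says ``Implementing the same idea as in the proof of Theorem \ref{thm4.1}'', so your write-up is, if anything, more detailed than the paper's own.
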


  \begin{theorem}\label{cor1}
  If $\beta_{1}\geq \beta_{2}>0,$ $\gamma_{i}=\lambda+\frac{k_{i1}^{2}}{2},\ i=1,2,$ and the initial values of the form $$f_{1}=C_{1}\psi \mbox{ and } f_{2}=C_{2}\psi,$$ for some positive constant $C_{1}$ and $C_{2}$ with $C_{1} \leq C_{2}$ such that 
  	\begin{align} {\label{cor3.1}}  
  	\beta_{i} \int_{0}^{\infty} \exp\{{((1+\beta_{i})k_{j1}-k_{i1})W(r)+((1+\beta_{i})k_{j2}-k_{i2})B^{H}(r)}\} \left\| \exp\{{\lambda r}\} S_{r}f_{i} \right\|_{\infty}^{\beta_{i}} dr <1, 
  	\end{align}
  	for $i=1,2, \ j\in \left\{ 1,2 \right\} / \left\{i\right\}.$ Then the weak solution $v=(v_{1},v_{2})^{\top}$ of the  system \eqref{s1} is global for all $(t,x) \in [0,\infty) \times D.$ Moreover, for all $(t,x) \in [0,\infty) \times D$ 
  	\begin{align*}
  	&0\leq v_{i}(t,x)\leq \\ &\displaystyle\frac{\exp\{{\lambda t}\} S_{t}f_{i}(x)}{\left[ 1-\beta_{i} \int_{0}^{t} \exp\{{((1+\beta_{i})k_{j1}-k_{i1})W(r)+((1+\beta_{i})k_{j2}-k_{i2})B^{H}(r)}\}  \left\| \exp\{{\lambda r}\} S_{r}f_{i} \right\|_{\infty}^{\beta_{i}} dr  \right]^{\frac{1}{\beta_{i}}}},
  	\end{align*}
  		for $i=1,2, \ j\in \left\{ 1,2 \right\} / \left\{i\right\}.$
  \end{theorem}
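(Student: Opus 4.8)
The plan is to read Theorem \ref{cor1} as the global-in-time reading of the lower-bound analysis in Theorem \ref{thm5.1}: hypothesis \eqref{cor3.1} is precisely the statement that the stopping time $\tau_{**}$ there is almost surely infinite, after which both assertions of the theorem fall out of estimates already established. First I would rewrite \eqref{cor3.1} for the prescribed data $f_i=C_i\psi$. Using $S_r\psi=e^{-\lambda r}\psi$ and the normalization $\|\psi\|_\infty=\sup_{x\in D}\psi(x)$, one has $\|e^{\lambda r}S_rf_i\|_\infty=C_i\|\psi\|_\infty$ for every $r$, so the exponent $\beta_i$ factors out and \eqref{cor3.1} becomes, for $i=1,2$ and $j\in\{1,2\}/\{i\}$,
\[
\int_0^\infty \exp\{((1+\beta_i)k_{j1}-k_{i1})W(r)+((1+\beta_i)k_{j2}-k_{i2})B^H(r)\}\,dr < \frac{1}{\beta_i C_i^{\beta_i}\|\psi\|_\infty^{\beta_i}}.
\]
These are exactly the two thresholds defining $\tau_{**}$ in Theorem \ref{thm5.1}, but taken over the whole half-line.

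Next, since the integrands are a.s. non-negative and continuous, the maps $t\mapsto\int_0^t(\cdots)\,dr$ are non-decreasing and bounded above by their $t=\infty$ value, which by the previous step is strictly below $1/(\beta_iC_i^{\beta_i}\|\psi\|_\infty^{\beta_i})$ for every $t\geq 0$. Hence neither defining inequality of $\tau_{**}$ is ever attained and $\tau_{**}=+\infty$ a.s. Because Theorem \ref{thm5.1} gives $\tau_{**}\leq\tau$, the blow-up time satisfies $\tau=+\infty$, i.e.\ the weak solution $v=(v_1,v_2)^\top$ of \eqref{s1} is global on $[0,\infty)\times D$.

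For the explicit bound I would invoke the upper estimate produced in the proof of Theorem \ref{thm5.1}, namely $0\le v_i(t,x)\le e^{\lambda t}S_tf_i(x)\,\mathscr{G}_i(t)$ valid on $[0,\tau_{**})$, where $\mathscr{G}_i$ is the function introduced there; the lower bound $v_i\ge 0$ comes from the positivity preservation for the parabolic system \eqref{s1} recorded in Section \ref{sec2}. Since $\tau_{**}=\infty$, this holds for all $(t,x)\in[0,\infty)\times D$, and spelling out $\mathscr{G}_i(t)$ reproduces the claimed inequality.

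The only point needing care, and what I regard as the main (though modest) obstacle, is the logical step from ``the a priori bound holds on $[0,\tau_{**})$'' to genuine global existence. For this I would appeal to the blow-up alternative for the local mild solution of \eqref{s1} (the solution continues as long as its $L^\infty$-norm remains finite) together with the uniformity of the bound: the denominator $1-\beta_i\int_0^t(\cdots)\,dr$ stays bounded below by $1-\beta_i\int_0^\infty(\cdots)\,dr>0$, which is exactly \eqref{cor3.1}, so $\mathscr{G}_i$ is uniformly bounded and $v_i(t,x)\le C_i\|\psi\|_\infty\,\mathscr{G}_i(t)$ cannot explode. This uniform $L^\infty$ control rules out finite-time blow-up and closes the argument.
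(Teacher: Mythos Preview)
Your proposal is correct and follows essentially the same approach as the paper, which simply states that the result ``immediately follows from Theorem~\ref{thm5.1}.'' You have supplied the details the paper omits---in particular the reduction of \eqref{cor3.1} to $\tau_{**}=+\infty$ and the appeal to the blow-up alternative to pass from the a priori bound to genuine global existence---but the underlying route is identical.
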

  \begin{proof}
  	Proof of this immediately follows from Theorem $\ref{thm5.1}.$  	
   \end{proof}	
	
   The following thoerem gives sharp bounds for $\left\lbrace p_{t}(x,y), \ t > 0 \right\rbrace$ (see \cite[Theorem 1.1]{wang1992}).
   \begin{theorem}\label{sha1}
	(\cite{wang1992}). Let $\psi$ be the first (normalized) Dirichlet eigenfunction of \eqref{a3}  on a connected bounded $C^{1, \alpha}$-domain $D \subset \mathbb{R}^{d}$, where $\alpha>0$ and $d \geq 1$, and let $\{p_{t}(x,y), \ t > 0 \}$ be the corresponding Dirichlet heat kernel. Then, there exists a constant $c > 0$ such that 
	\begin{align}\label{cd1}
	\max \left\lbrace 1,\frac{1}{c}t^{-\frac{(d+2)}{2}} \right\rbrace \leq e^{\lambda t}\sup_{x,y \in D} \frac{p_{t}(x,y)}{\psi(x) \psi(y)} \leq 1+c(1 \wedge t)^{-\frac{(d+2)}{2}}e^{-(\widehat{\lambda}-\lambda)t}, \ t>0,   
	\end{align}	
	where $\lambda,\widehat{\lambda}$ are the first two Dirichlet eigenvalues of  the Laplacian with $\widehat{\lambda}>\lambda$. This estimate is sharp for both short and long times.
    \end{theorem}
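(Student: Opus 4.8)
The plan is to pass to the \emph{ground state transform} of the killed semigroup and then split the analysis into a long-time regime controlled by the spectral gap $\widehat{\lambda}-\lambda$ and a short-time regime controlled by sharp Gaussian bounds with boundary decay. Writing the Dirichlet heat kernel in its spectral expansion $p_{t}(x,y)=\sum_{n\ge 1}e^{-\lambda_{n}t}\phi_{n}(x)\phi_{n}(y)$, with $0<\lambda=\lambda_{1}<\widehat{\lambda}=\lambda_{2}\le\lambda_{3}\le\cdots$ and $\phi_{1}=\psi$, I would study the normalized kernel
$$q_{t}(x,y):=e^{\lambda t}\frac{p_{t}(x,y)}{\psi(x)\psi(y)}=1+\sum_{n\ge 2}e^{-(\lambda_{n}-\lambda)t}\,\frac{\phi_{n}(x)\phi_{n}(y)}{\psi(x)\psi(y)},$$
which is precisely the transition density, with respect to the invariant probability measure $\psi^{2}\,dx$, of the Doob $h$-transform of $\{S_{t}\}_{t\ge0}$ by the ground state $\psi$.

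For the lower bound I would exploit the Markov property of this transform: since $S_{t}\psi=e^{-\lambda t}\psi$, the transformed semigroup preserves constants, so $\int_{D}q_{t}(x,y)\psi^{2}(y)\,dy=1$ for every $x\in D$. As $\psi^{2}\,dy$ is a probability measure, a mean value equal to $1$ forces $\sup_{x,y}q_{t}(x,y)\ge 1$, giving the ``$1$'' inside the maximum. The competing term $\tfrac{1}{c}t^{-(d+2)/2}$ I would extract from the short-time behaviour below, by evaluating $q_{t}$ at a pair of nearby points lying within distance $\sqrt{t}$ of $\partial D$.

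For the upper bound I would separate the two time scales. In the long-time regime $t\ge1$ the leading summand is $1$ and the remainder is bounded by $e^{-(\widehat{\lambda}-\lambda)t}\sum_{n\ge2}e^{-(\lambda_{n}-\widehat{\lambda})t}\|\phi_{n}/\psi\|_{\infty}^{2}$; intrinsic ultracontractivity on a $C^{1,\alpha}$ domain yields a comparison of the form $|\phi_{n}(x)|\le c\,\lambda_{n}^{\kappa}\,\psi(x)$ for some $\kappa>0$, which together with the Weyl growth $\lambda_{n}\sim n^{2/d}$ makes the series summable and produces the decay rate $\widehat{\lambda}-\lambda$. In the short-time regime $0<t<1$ I would invoke the sharp two-sided Gaussian estimate with boundary decay,
$$p_{t}(x,y)\asymp t^{-d/2}\Big(1\wedge\tfrac{\delta(x)}{\sqrt{t}}\Big)\Big(1\wedge\tfrac{\delta(y)}{\sqrt{t}}\Big)e^{-c|x-y|^{2}/t},\qquad \delta(\cdot)=\operatorname{dist}(\cdot,\partial D),$$
together with the Hopf-lemma equivalence $\psi\asymp\delta$ valid on $C^{1,\alpha}$ domains. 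Dividing by $\psi(x)\psi(y)$ and using $\big(1\wedge\delta(x)/\sqrt{t}\big)/\delta(x)\le t^{-1/2}$ converts the two boundary factors into the power $t^{-1}$, so that the kernel is governed by $t^{-(d+2)/2}$, i.e.\ by $(1\wedge t)^{-(d+2)/2}$.

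The hardest step I expect to be the short-time analysis: securing the boundary-decay Gaussian bounds and matching them, through $\psi\asymp\delta$, to the \emph{exact} exponent $(d+2)/2$, since this is where the sharpness of \eqref{cd1} and the $C^{1,\alpha}$ regularity of $\partial D$ are genuinely used; the long-time part is comparatively routine once intrinsic ultracontractivity and the spectral gap are available. As the statement is quoted verbatim from Wang \cite{wang1992}, the complete argument, together with the optimality assertion for short and long times, is carried out there, and the outline above is the route I would reconstruct to recover it.
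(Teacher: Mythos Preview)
The paper does not prove this theorem at all: it is quoted directly from Ouhabaz and Wang \cite{wang1992} and used as a black box in the proof of Theorem~\ref{sh1}. There is therefore no ``paper's own proof'' to compare your sketch against. You yourself note this at the end of your proposal, and your outline (ground-state transform, spectral-gap control for large $t$, boundary-decay Gaussian bounds plus $\psi\asymp\delta$ for small $t$) is a reasonable high-level description of the ingredients in the Ouhabaz--Wang argument, but the present paper contributes nothing beyond the citation.
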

    The following result establishes the existence of a global weak solution $v=(v_{1},v_{2})^{\top}$ of the system \eqref{s1}   by using the sharp bounds which are given in Theorem \ref{sha1}.

   \begin{theorem}\label{sh1}
   	Let $\beta_{1}\geq \beta_{2}>0,$ $\gamma_{i}=\lambda+\frac{k_{i1}^{2}}{2},\ i=1,2,$ $D$ be a connected and bounded $C^{1, \alpha}$-domain in $\mathbb{R}^{d}$, where $\alpha>0$ and $d \geq 1$ and $\psi>0$ be the first eigenfunction on $D$. If the initial values are of the form $f_{i}(x) = C_{i} \psi(x), i=1,2$ for some positive constants $C_{1}$ and $C_{2}$ with $C_{1} \leq C_{2}$ satisfying
   	\begin{align} 
   	\beta_{i} \left( C_{2}(1+c)\|\psi\|_\infty^{2}\right)^{\beta_{i}} \int_{0}^{\infty} \exp\{{((1+\beta_{i})k_{j1}-k_{i1})W(r)+((1+\beta_{i})k_{j2}-k_{i2})B^{H}(r)}\} dr <1,\nonumber
   	\end{align}  
   	for $i=1,2, \ j\in \left\{ 1,2 \right\} / \left\{i\right\}.$ Then the solution $v=(v_1,v_2)^{\top}$ of the system \eqref{s1} exists globally. 
   \end{theorem}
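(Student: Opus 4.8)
The plan is to deduce Theorem \ref{sh1} directly from the global-existence criterion of Theorem \ref{cor1}. Observe that the hypothesis \eqref{cor3.1} of Theorem \ref{cor1} differs from the hypothesis of Theorem \ref{sh1} only in the placement of one factor: in \eqref{cor3.1} the quantity $\|e^{\lambda r}S_rf_i\|_\infty^{\beta_i}$ sits \emph{inside} the time integral, whereas in Theorem \ref{sh1} the constant $\big(C_2(1+c)\|\psi\|_\infty^2\big)^{\beta_i}$ sits \emph{outside} it. Hence the entire proof reduces to establishing the uniform-in-time bound
\[
\sup_{r\ge 0}\big\|e^{\lambda r}S_rf_i\big\|_\infty\le C_2(1+c)\|\psi\|_\infty^2,\qquad i=1,2 .
\]
Once this holds, we may replace $\|e^{\lambda r}S_rf_i\|_\infty^{\beta_i}$ by the constant, pull it out of the integral, and the assumption of Theorem \ref{sh1} becomes exactly condition \eqref{cor3.1}; Theorem \ref{cor1} then yields the global solution $v=(v_1,v_2)^{\top}$ together with its pointwise upper bound.

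To prove the uniform estimate I would first write $S_rf_i$ through the Dirichlet heat kernel, $S_rf_i(x)=C_i\int_D p_r(x,y)\psi(y)\,dy$, using $f_i=C_i\psi$. Multiplying by $e^{\lambda r}$ and inserting the upper bound of \eqref{cd1} from Theorem \ref{sha1}, namely $e^{\lambda r}p_r(x,y)\le \psi(x)\psi(y)\big[1+c(1\wedge r)^{-(d+2)/2}e^{-(\widehat\lambda-\lambda)r}\big]$, gives $e^{\lambda r}S_rf_i(x)\le C_i(1+c)\,\psi(x)\int_D\psi(y)^2\,dy$ on the range where the bracket is dominated by $1+c$. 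I would then invoke the normalization $\int_D\psi\,dx=1$ to bound $\int_D\psi(y)^2\,dy\le\|\psi\|_\infty\int_D\psi\,dy=\|\psi\|_\infty$, and take the supremum over $x\in D$; together with $C_i\le C_2$ this produces precisely the constant $C_2(1+c)\|\psi\|_\infty^2$.

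The main obstacle is that the sharp kernel bound \eqref{cd1} degenerates as $r\to 0^+$: the factor $(1\wedge r)^{-(d+2)/2}$ blows up, so the bracket is \emph{not} bounded by $1+c$ for small $r$, and the uniform estimate cannot be read off from \eqref{cd1} alone over the whole half-line. For $r\ge 1$ there is no difficulty, since there $(1\wedge r)^{-(d+2)/2}=1$ and $e^{-(\widehat\lambda-\lambda)r}\le 1$, whence the bracket is at most $1+c$. For the complementary range $0<r<1$ I would bypass the kernel estimate and instead invoke the eigenfunction identity $S_r\psi=e^{-\lambda r}\psi$ recorded in Section \ref{sec2}, which gives the exact equality $e^{\lambda r}S_rf_i=C_i\psi$, hence $\|e^{\lambda r}S_rf_i\|_\infty=C_i\|\psi\|_\infty$; since $C_i\le C_2$ and $1+c\ge1$, this is again dominated by $C_2(1+c)\|\psi\|_\infty^2$ (modulo the mild normalization condition $(1+c)\|\psi\|_\infty\ge1$, which the identity route makes transparent). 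Splicing the two ranges yields the uniform bound for all $r\ge0$, and the conclusion follows from Theorem \ref{cor1}.
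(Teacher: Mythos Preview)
Your strategy coincides with the paper's: both reduce Theorem \ref{sh1} to Theorem \ref{cor1} by establishing a uniform-in-time bound $\|e^{\lambda r}S_rf_i\|_\infty\le C_2(1+c)\|\psi\|_\infty^{2}$ via the sharp heat-kernel estimate \eqref{cd1}. The paper writes $S_tf_i(x)=\int_D p_t(x,y)f_i(y)\,dy$, inserts \eqref{cd1}, and then passes from the bracket $1+c(1\wedge t)^{-(d+2)/2}e^{-(\widehat\lambda-\lambda)t}$ directly to $e^{-\lambda t}+ce^{-\widehat\lambda t}$ after multiplying by $e^{-\lambda t}$; this step silently drops the factor $(1\wedge t)^{-(d+2)/2}$, so as written it is only justified for $t\ge 1$. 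You spotted this short-time degeneracy and proposed patching $0<r<1$ with the eigenfunction identity $S_r\psi=e^{-\lambda r}\psi$, which for $f_i=C_i\psi$ gives the \emph{exact} relation $e^{\lambda r}S_rf_i=C_i\psi$ for every $r\ge 0$. That is a genuine improvement over the paper's argument: it not only fills the gap but in fact makes the appeal to \eqref{cd1} entirely superfluous, since one obtains $\|e^{\lambda r}S_rf_i\|_\infty=C_i\|\psi\|_\infty$ uniformly in $r$. Your residual caveat $(1+c)\|\psi\|_\infty\ge 1$ is needed only to match the specific constant appearing in the statement of Theorem \ref{sh1}; the eigenfunction route shows that the natural sufficient condition is actually the sharper $\beta_i(C_i\|\psi\|_\infty)^{\beta_i}\int_0^\infty\exp\{\cdots\}\,dr<1$, from which the stated hypothesis follows whenever that mild normalization holds.
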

   \begin{proof}
   	First, we let us verify the condition \eqref{cor3.1} of Theorem \ref{cor1}. For any $f_{i}(x) =C_{i} \psi(x), \ i=1,2$ and any $t>0,$ we have
   	\begin{align}
   		S_{t}f_{i}(x)&=\int_{D} p_{t}(x,y)f_{i}(y)dy =\int_{D} e^{\lambda t}\frac{p_{t}(x,y)}{\psi(x) \psi(y)}e^{-\lambda t}\psi(x) \psi(y) f_{i}(y)dy \nonumber\\	&\leq \left\| f_{i} \right\|_{\infty} \|\psi\|_\infty \int_{D} e^{\lambda t}\frac{p_{t}(x,y)}{\psi(x) \psi(y)}e^{-\lambda_{1}t} \psi(y) dy. \nonumber
   	\end{align} 
   	By using \eqref{cd1}, we have	
   	\begin{align} 
   		S_{t}f_{i}(x)& \leq \left\| f_{i} \right\|_{\infty} \|\psi\|_\infty \int_{D} \left( 1+c(1 \wedge t)^{-\frac{(d+2)}{2}}e^{-(\widehat{\lambda} -\lambda)t} \right) e^{-\lambda t} \psi(y) dy \nonumber\\
   		&\leq \left\| f_{i} \right\|_{\infty} \|\psi\|_\infty\left(e^{-\lambda t}+ce^{-\widehat{\lambda} t} \right)  \int_{D} \psi(y) dy \nonumber\\
   		&\leq \left\| f_{i}\right\|_{\infty} \|\psi\|_\infty\left(1+c \right)e^{-\lambda t}  \int_{D} \psi(y) dy. \nonumber
   	\end{align} 
   	Therefore,
   	\begin{align*}
   		\left\| e^{\lambda t}S_{t}f_{i}\right\|_{\infty} \leq C_{i}(1+c)\|\psi\|_\infty^{2}  \int_{D} \psi(y) dy\leq C_{2}(1+c)\|\psi\|_\infty^{2}, \ \ \mbox{for $i=1,2.$} 
   	\end{align*} 
   	The conditions given in \eqref{cor3.1} of Theorem \ref{cor1} are satisfied, provided
   	\begin{align*}
	\beta_{i} \left( C_{2}(1+c)\|\psi\|_\infty^{2}\right)^{\beta_{i}} \int_{0}^{\infty} \exp\{{((1+\beta_{i})k_{j1}-k_{i1})W(r)+((1+\beta_{i})k_{j2}-k_{i2})B^{H}(r)}\} dr <1, 
   	\end{align*} 
   	for $i=1,2, \ j\in \left\{ 1,2 \right\} / \left\{i\right\}$ and for  constants $C_1$ and $C_2$ sufficiently small. Rest of the proof follows from Theorem $\ref{cor1}$.
   \end{proof}

    \section{Probability of finite-time blow-up}\label{s2}
	In this section, we estimate the blow-up probability of solution $u=(u_1,u_2)^{\top}$ to the system \eqref{b1}-\eqref{b2}. We estimate a tail probability for the exponential function of Brownian motion and fBm when blow-up occurs before a fixed time $T > 0$. The method adopted here to estimate a tail probability for the exponential function of mixed Brownian motion and fBm are borrowed from \cite{doz2023} and \cite{dung}, respectively. 
	
	Let $B^{H}$	be a one-dimensional fBm defined on a complete probability space $\left( \Omega, \mathcal{F}, (\mathcal{F}_{t})_{t \geq 0} , \mathbb{P} \right).$ 
	For $h\in \mathbb{H}:=\mathrm{L}^{2}([0,T];\mathbb{R}),$ the Wiener integral $W(h)$ is denoted by 
	$$ W(h)=\int_{0}^{T} h(t) dW(t).$$ 
	Let $\mathcal{S}$ denote a dense subset of $\mathrm{L}^{2}(\Omega, \mathcal{F},\mathbb{P})$ consisting of smooth random variables of the form 
	\begin{align}\label{p1}
	F=f(W(h_{1}),W(h_{2}),\ldots, W(h_{n})), \ \mbox{for each} \ n\in \mathbb{N},
	\end{align}
	where $f \in C_{0}^{\infty}(\mathbb R^d)$ and for $i\in \{1,2,\ldots,n\}$, $h_{i} \in \mathbb{H}.$ If $F$ is of the form \textcolor{red}{given} in \eqref{p1},  we define its Malliavin derivative as the process 
	\begin{align} 
	DF:=(D_{t}F)_{t \in [0,T]}, \nonumber 
	\end{align} 
	given by 
	\begin{align*}
	D_{t}F = \sum_{k=1}^{n} \frac{\partial f}{\partial x_{k}} \left( W(h_{1}),W(h_{2}),\ldots,W(h_{n}) \right) h_{k}(t). 
	\end{align*}
	For any $1\leq p< \infty,$ we shall denote by $\mathbb{D}^{1,p}$, the closure of $\mathcal{S}$ with respect to the norm 
	\begin{align*}
	\|F\|_{1,p}^{p} := \mathbb{E}\left[| F |^{p}\right]+ \mathbb{E}\left[ \|\mathrm{D}F \|_{\mathbb{H}}^{p}\right], 
	\end{align*}   
	where $ \| \mathrm{D}F \|_{\mathbb{H}}^{p} = \int_{0}^{T} |\mathrm{D}_{t}F|^{p}dt $. A random variable $F$ is said to be \emph{Malliavin differentiable} if it belongs to $\mathbb{D}^{1,2}.$ 
	%for some  $1\leq p<\infty$.
	
	An fBm of Hurst parameter $H \in (0,1)$ is a centered Gaussian process $\{B^{H}(t)\}_{t \geq 0}$ with the covariance function (see \cite[Definition of 5.1, p.273]{nualart}) 
	\begin{align*}
	R_{H}(t,s):=\mathbb{E}\left[ B^{H}(t)B^{H}(s)\right] =\frac{1}{2} \left( s^{2H}+t^{2H}-|t-s|^{2H} \right),
	\end{align*}
so that $\mathbb{E}\left[|B^H(t)|^2\right]=t^{2H}$.	It is known that $B^{H}(t)$  admits the so called Volterra representation (for more details, see \cite{nualart}),
	\begin{align}
	B^{H}(t):=\int_{0}^{t} K^{H}(t,s) dW(s), \nonumber 
	\end{align}  
	where $\{W(t)\}_{t\geq 0},$ is a standard Brownian motion, the Volterra kernel $K_{H}(t,s)$ is defined by 
	\begin{align}
	K_{H}(t,s) = C_{H} \left[ \frac{t^{H-\frac{1}{2}}}{s^{H-\frac{1}{2}}}(t-s)^{H-\frac{1}{2}} -\left(H-\frac{1}{2}\right)\int_{s}^{t} \frac{u^{H-\frac{3}{2}}}{s^{H-\frac{1}{2}}}(u-s)^{H-\frac{1}{2}} du  \right], \ \ s \leq t, \nonumber 
	\end{align}
	where $C_{H}$ is a constant depending only on $H.$	By It\^o isometry, we also have 
	\begin{align*}
		\mathbb{E}\left[|B^H(t)|^2\right]=\int_0^t(K^H(t,s))^2ds.
	\end{align*}
%	\textcolor{red}{Let the process $B^{H}$ be given by the formula
%	$$ B^{H}(t)=\int_{0}^{t}K^{H}(t,s)dW(s),$$ where the kernel $K^{H}$ is given for $H>\frac{1}{2}$ by
%	\begin{equation} 
%	K^{H}(t,s) = \left\{
%	\begin{aligned}
%	C_{H}s^{1/2-H}\int_{s}^{t}(x-s)^{H-3/2}x^{H-1/2} dx\ &\mbox{if}\  t>s, \nonumber\\
%	0\hspace{2.4 in} &\mbox{if}\ t \leq s,	
%	\end{aligned}
%	\right.
%	\end{equation}}
%	where $C_{H}=\left[ \frac{H(2H-1) }{\mathcal{B}(2-2H,H-1/2)}\right]^{\frac{1}{2}}$ and $\mathcal{B}$ is the usual beta function (see \cite[section 5.1.3]{nualart}) for a general representation fromula of fBm with $H>\frac{1}{2}$. 
Notice that $W$ and $B^{H}$ are dependent in this case. 
	
	The following theorem provides the tail probability of the exponential function which will be useful for further discussions.
	\begin{theorem}(\cite[Theorem 3.1]{dung2})\label{thme6.1} Let $\{X_{t}\}_{t \in [0,T]}$ be a continuous stochastic process in $\mathbb{D}^{1,2}.$ Assume that one of the following two conditions holds:
	\begin{itemize}
	\item[(i)] $\displaystyle\sup_{s \in [0,T]} \int_{0}^{T} |D_{r}X_{s}|^{2}dr \leq M^{2},\ \mathbb{P}\text{-a.s.},$ \nonumber\\
	\item[(ii)] $\displaystyle\int_{0}^{T} \mathbb{E} \left[ \displaystyle\sup_{s \in [0,T]} |D_{r}X_{s}|^{2}| \mathcal{F}_{r} \right] dr \leq M^{2},$ $\mathbb{P}$\text{-a.s.} 		
	\end{itemize}
	Then the tail probability of the exponential functional satisfies
	\begin{align}
    \mathbb{P}\left( \int_{0}^{T} e^{X_{s}} ds \geq x\right) \leq 2 e^{-\frac{(\ln x- \ln \mu)^{2}}{2 M^{2}}},\ x>\mu, 
	\end{align}
	where $\mu = \int_{0}^{T} \mathbb{E}[e^{X_{s}}]ds.$
	\end{theorem}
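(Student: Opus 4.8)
The plan is to reduce the statement to a Gaussian concentration inequality applied to the logarithm of the exponential functional. Set $F:=\int_{0}^{T}e^{X_{s}}\,ds$, which is strictly positive $\mathbb{P}$-a.s., and $G:=\ln F$; the target probability is $\mathbb{P}\left(F\geq x\right)=\mathbb{P}\left(G\geq \ln x\right)$. First I would establish that $G\in\mathbb{D}^{1,2}$ with $D_{r}G=D_{r}F/F$ and $D_{r}F=\int_{0}^{T}e^{X_{s}}D_{r}X_{s}\,ds$. Since $\ln$ is smooth but not globally Lipschitz on $(0,\infty)$ and $F$ need not be bounded away from $0$, this needs an approximation argument: apply the chain rule to $\phi_{\varepsilon}(F)$ for smooth Lipschitz $\phi_{\varepsilon}$ agreeing with $\ln$ on $[\varepsilon,\infty)$ and pass to the limit, using the lower bound $F\geq T\exp\left(T^{-1}\int_{0}^{T}X_{s}\,ds\right)$ (Jensen) to control $(\ln F)^{-}$ and secure the integrability of $G$.

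The core estimate is a pointwise bound on $DG$. By the Cauchy--Schwarz inequality in the $s$-variable,
\[
|D_{r}F|^{2}=\left(\int_{0}^{T}e^{X_{s}}D_{r}X_{s}\,ds\right)^{2}\leq F\int_{0}^{T}e^{X_{s}}|D_{r}X_{s}|^{2}\,ds,
\]
so that $|D_{r}G|^{2}=|D_{r}F|^{2}/F^{2}\leq F^{-1}\int_{0}^{T}e^{X_{s}}|D_{r}X_{s}|^{2}\,ds$. Under condition (i), integrating in $r$ and invoking $\int_{0}^{T}|D_{r}X_{s}|^{2}\,dr\leq M^{2}$ yields $\|DG\|_{\mathbb{H}}^{2}\leq M^{2}$ $\mathbb{P}$-a.s. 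Under condition (ii), the same manipulation gives the cruder bound $|D_{r}G|^{2}\leq \sup_{s}|D_{r}X_{s}|^{2}$, whence, after conditional Jensen, $\int_{0}^{T}|\mathbb{E}[D_{r}G\mid\mathcal{F}_{r}]|^{2}\,dr\leq \int_{0}^{T}\mathbb{E}[\sup_{s}|D_{r}X_{s}|^{2}\mid\mathcal{F}_{r}]\,dr\leq M^{2}$.

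With these bounds secured I would invoke concentration along two routes. In case (i) the a.s. bound $\|DG\|_{\mathbb{H}}\leq M$ places $G$ in the scope of the Gaussian concentration inequality for $\mathbb{D}^{1,2}$ functionals, giving $\mathbb{P}\left(|G-\mathbb{E}G|\geq y\right)\leq 2e^{-y^{2}/(2M^{2})}$. In case (ii) I would instead use the Clark--Ocone representation $G-\mathbb{E}G=\int_{0}^{T}\mathbb{E}[D_{r}G\mid\mathcal{F}_{r}]\,dW(r)$, a continuous martingale $N$ with $\langle N\rangle_{T}\leq M^{2}$; the exponential supermartingale $\exp\left(\theta N_{t}-\tfrac{\theta^{2}}{2}\langle N\rangle_{t}\right)$ then delivers $\mathbb{E}[e^{\theta(G-\mathbb{E}G)}]\leq e^{\theta^{2}M^{2}/2}$, and Chernoff optimisation together with symmetry reproduces the same sub-Gaussian tail.

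Finally I would transfer the deviation bound from $\mathbb{E}G$ to $\ln\mu$. Jensen's inequality gives $\mathbb{E}G=\mathbb{E}[\ln F]\leq \ln\mathbb{E}[F]=\ln\mu$, so for $x>\mu$ the level satisfies $\ln x-\mathbb{E}G\geq \ln x-\ln\mu>0$; monotonicity of the Gaussian tail then converts the concentration estimate into $\mathbb{P}\left(G\geq \ln x\right)\leq 2e^{-(\ln x-\ln\mu)^{2}/(2M^{2})}$, which is exactly the asserted bound. The step I expect to be the main obstacle is the rigorous justification that $\ln F\in\mathbb{D}^{1,2}$ and that the chain rule applies, precisely because $\ln$ is not Lipschitz near the origin and $F$ carries no deterministic positive lower bound; once the derivative bounds are in place, the concentration inputs are standard.
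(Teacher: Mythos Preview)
The paper does not prove this theorem at all: it is quoted verbatim as \cite[Theorem 3.1]{dung2} and used as a black box in the proof of Theorem~\ref{thm6}. There is therefore nothing in the paper to compare your argument against.

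That said, your sketch is essentially the proof given in the cited reference. The reduction to $G=\ln F$, the Cauchy--Schwarz bound
\[
|D_{r}G|^{2}\leq F^{-1}\int_{0}^{T}e^{X_{s}}|D_{r}X_{s}|^{2}\,ds,
\]
the two branches (a.s.\ bound on $\|DG\|_{\mathbb{H}}$ under (i), Clark--Ocone and an exponential martingale under (ii)), and the final Jensen step $\mathbb{E}[\ln F]\leq \ln\mu$ are exactly the ingredients of Dung's argument. Your identification of the chain rule for $\ln$ as the only genuinely delicate point is accurate; Dung handles it by the same truncation-and-limit device you indicate. One minor remark: your Chernoff/exponential-martingale route in case~(ii) actually yields the one-sided bound with constant $1$ rather than $2$, so you would obtain a slightly sharper inequality than stated; the constant $2$ in the theorem is simply the two-sided version and is not optimal for the upper-tail alone.
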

	
	The following theorem is useful to obtain an upper bound for the probability of finite-time blow-up	solution $u=(u_{1},u_{2})^{\top}$ and blow-up before a given fixed time $T>0$ of the system \eqref{b1}-\eqref{b2} with $\frac{1}{2}<H<1$.
	\begin{theorem}\label{thm6}
	Let $\beta_{1}\geq \beta_{2}>0$. For any non-negative initial data $f=(f_{1}, f_{2})^{\top}$, we have the following results:
	\begin{itemize}
	\item[1.] If $\beta_{1}=\beta_{2}=\beta$ (say) and let $\mu_{0}(T)=\displaystyle\int_{0}^{T}\exp\{-as\}\mathbb{E}\left( \exp\{\rho_{1}W(s)+\rho_{2}B^{H}(s)\}\right)  ds$, then for any $T>0$ such that $2^{\beta} {\beta}^{-1}E^{-\beta}(0)>\mu_{0}(T),$ 
	\begin{align}
	\mathbb{P}\left(\tau_{1}^{\ast} \leq T \right) \leq   2 \exp\left\lbrace -\frac{(\ln (2^{\beta} {\beta}^{-1}E^{-\beta}(0))- \ln(\mu_{0}(T)))^{2}}{2(2\rho_{1}^{2}T +2\rho_{2}^{2}T^{2H})^{2}}\right\rbrace, \nonumber 
	\end{align}
	where $a=\beta(\lambda-\gamma+k^{2}).$
	\item[2.] If $\beta_{1}>\beta_{2}$ and let $\mu_{1}(T)=\displaystyle\int_{0}^{T}\exp\{-a_{1}s\}\mathbb{E}\left( \exp\{\rho_{1}W(s)+\rho_{2}B^{H}(s)\}\right)  ds$, then for any $T>0$ such that $N>\mu_{1}(T),$
	\begin{align}
	\mathbb{P}\left(\tau_{2}^{\ast} \leq T \right) \leq 2 \exp\left\lbrace -\frac{(\ln \left({N} \right) - \ln(\mu_{1}(T)))^{2}}{2(2\rho_{1}^{2}T +2\rho_{2}^{2}T^{2H})^{2}}\right\rbrace, \nonumber 
	\end{align}
	\end{itemize}
	where $k^{2}=\max\left\lbrace  \frac{k_{11}^{2}}{2}, \frac{k_{21}^{2}}{2} \right\rbrace$,  $a_{1}=\beta_{2}(\lambda-\gamma+k^{2}),$  $N=\left[\beta_{2} E^{\beta_{2}}(0)\left(\frac{\epsilon_0}{2^{1+\beta_{2}}}-\frac{ \epsilon_{0}^{\frac{1+\beta_{1}}{\beta_{1}-\beta_{2}}}D_{1}}{E^{1+\beta_{2}}(0)}\right)\right]^{-1},$ $E(0)=\displaystyle\int_{D}[f_{1}(x)+f_{2}(x)]\psi(x)dx$, here $\epsilon_{0}, D_{1}$ and $\gamma$ are given in Theorem \ref{thm4.1}, and $\rho_{1}$, $\rho_{2}$ are defined in \eqref{a4}.
	\end{theorem}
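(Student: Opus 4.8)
The plan is to observe that each blow-up time $\tau^{\ast}_{1},\tau^{\ast}_{2}$ is precisely the first passage time of an exponential functional of the driving noise across a deterministic level, and then to apply the Gaussian-type tail bound of Theorem \ref{thme6.1}. Writing $X_{s}:=\rho_{1}W(s)+\rho_{2}B^{H}(s)-as$ with $a=\beta(\lambda-\gamma+k^{2})$ in Case 1, the integrand in the definition of $\tau^{\ast}_{1}$ is exactly $e^{X_{s}}$, since $\beta(-\lambda+\gamma-k^{2})=-a$. As $e^{X_{s}}>0$, the functional $t\mapsto\int_{0}^{t}e^{X_{s}}\,ds$ is continuous and strictly increasing, whence for every fixed $T>0$,
\[
\{\tau^{\ast}_{1}\leq T\}=\Bigl\{\textstyle\int_{0}^{T}e^{X_{s}}\,ds\geq 2^{\beta}\beta^{-1}E^{-\beta}(0)\Bigr\}.
\]
This turns the blow-up probability into a tail probability of an exponential functional, to which Theorem \ref{thme6.1} applies with $x=2^{\beta}\beta^{-1}E^{-\beta}(0)$.

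To check the hypotheses of Theorem \ref{thme6.1}, I would compute the Malliavin derivative of $X_{s}$ with respect to the Brownian motion $W$ underlying the Volterra representation of $B^{H}$. From $D_{r}W(s)=\mathbf{1}_{[0,s]}(r)$ and $B^{H}(s)=\int_{0}^{s}K_{H}(s,r)\,dW(r)$, which gives $D_{r}B^{H}(s)=K_{H}(s,r)\mathbf{1}_{[0,s]}(r)$, one obtains
\[
D_{r}X_{s}=\bigl(\rho_{1}+\rho_{2}K_{H}(s,r)\bigr)\mathbf{1}_{[0,s]}(r).
\]
Using $(\rho_{1}+\rho_{2}K_{H}(s,r))^{2}\leq 2\rho_{1}^{2}+2\rho_{2}^{2}K_{H}(s,r)^{2}$ together with the It\^o isometry identity $\int_{0}^{s}K_{H}(s,r)^{2}\,dr=\mathbb{E}[|B^{H}(s)|^{2}]=s^{2H}$ recorded above, I find
\[
\int_{0}^{T}|D_{r}X_{s}|^{2}\,dr=\int_{0}^{s}\bigl(\rho_{1}+\rho_{2}K_{H}(s,r)\bigr)^{2}\,dr\leq 2\rho_{1}^{2}s+2\rho_{2}^{2}s^{2H}.
\]
Because $s\mapsto 2\rho_{1}^{2}s+2\rho_{2}^{2}s^{2H}$ is nondecreasing on $[0,T]$ for $\tfrac12<H<1$, taking the supremum over $s\in[0,T]$ supplies the deterministic bound $M^{2}=2\rho_{1}^{2}T+2\rho_{2}^{2}T^{2H}$, so that the pathwise hypothesis (i) of Theorem \ref{thme6.1} holds.

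It remains to identify the mean parameter: $\mu=\int_{0}^{T}\mathbb{E}[e^{X_{s}}]\,ds=\int_{0}^{T}e^{-as}\,\mathbb{E}\bigl[\exp\{\rho_{1}W(s)+\rho_{2}B^{H}(s)\}\bigr]\,ds=\mu_{0}(T)$, so that the admissibility condition $x>\mu$ of Theorem \ref{thme6.1} is exactly the standing assumption $2^{\beta}\beta^{-1}E^{-\beta}(0)>\mu_{0}(T)$. Substituting $x$, $\mu_{0}(T)$ and $M^{2}$ then gives the stated estimate for $\mathbb{P}(\tau^{\ast}_{1}\leq T)$. Case 2 follows the same template verbatim: one replaces $a$ by $a_{1}=\beta_{2}(\lambda-\gamma+k^{2})$, the level $2^{\beta}\beta^{-1}E^{-\beta}(0)$ by $N$, and $\mu_{0}(T)$ by $\mu_{1}(T)$, while the derivative bound $M^{2}=2\rho_{1}^{2}T+2\rho_{2}^{2}T^{2H}$ is unchanged since it depends only on $\rho_{1},\rho_{2},H,T$. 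I expect the main obstacle to lie in the Malliavin step, namely justifying that $X_{s}\in\mathbb{D}^{1,2}$ with the derivative formula above and confirming that the bound on $\int_{0}^{T}|D_{r}X_{s}|^{2}\,dr$ is uniform in $s$, so that the stronger hypothesis (i) (rather than the conditional hypothesis (ii)) is available; the monotonicity reduction and the identification of $\mu$ with $\mu_{0}(T),\mu_{1}(T)$ are routine.
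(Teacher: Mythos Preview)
Your proposal is correct and follows essentially the same approach as the paper: define $X_{s}=\rho_{1}W(s)+\rho_{2}B^{H}(s)-as$, verify hypothesis (i) of Theorem \ref{thme6.1} via $D_{r}X_{s}=(\rho_{1}+\rho_{2}K_{H}(s,r))\mathbf{1}_{[0,s]}(r)$ and the bound $\int_{0}^{T}|D_{r}X_{s}|^{2}\,dr\leq 2\rho_{1}^{2}s+2\rho_{2}^{2}s^{2H}\leq 2\rho_{1}^{2}T+2\rho_{2}^{2}T^{2H}$, identify $\mu=\mu_{0}(T)$, and apply the tail estimate; Case 2 is handled identically with $a_{1}$ and $N$ in place of $a$ and the level. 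Your explicit justification of $\{\tau^{\ast}_{1}\leq T\}=\{\int_{0}^{T}e^{X_{s}}\,ds\geq 2^{\beta}\beta^{-1}E^{-\beta}(0)\}$ via strict monotonicity is a detail the paper leaves implicit.
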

	\begin{proof}
	For each $t \geq 0$, let 
	$$Z_{t}=-at+\rho_{1}W(t)+\rho_{2}B^{H}(t).$$
	The stochastic process $\{Z_{t}\}_{t \geq 0}$ satisfies the condition (i) of the Theorem \ref{thm6}. Since
	\begin{align}
	D_{r}Z_{t} &= \rho_{1}+\rho_{2}D_{r}\left(\int_{0}^{t} K^{H}(t,s)dW(s) \right)= \rho_{1}+\rho_{2}  K^{H}(t,r),\ \mbox{for}\ r \leq t,  \nonumber 
	\end{align}
	and $D_{r}Z_{t} =0,$ for $r>t.$ Further, we have
    \begin{align}
    \int_{0}^{T} |D_{r}Z_{t}|^{2}dr= \int_{0}^{t} \left[  \rho_{1}+\rho_{2}K^{H}(t, r)\right]^{2} dr &\leq    2\rho^{2}_{1}t+2\rho_{2}^{2}\int_{0}^{t}   (K^{H}(t,r))^{2}dr \nonumber\\
    &= 2\rho^{2}_{1}t+2\rho_{2}^{2}\mathbb{E}|B^{H}(t)|^{2}= 2\rho^{2}_{1}t+2\rho_{2}^{2}t^{2H}. \nonumber 
    \end{align}
%    and using (27) in \cite{doz2023}, we have    
%    \begin{align}
%    \int_{0}^{t}\left(\int_{r}^{t}\frac{\partial}{\partial s}  K^{H}(s,r)ds \right)^{2}dr&=\int_{0}^{t}\left(\int_{r}^{t}\frac{\partial}{\partial s}  K^{H}(s,r)ds \right)\left(\int_{r}^{t}\frac{\partial}{\partial s_{1}}  K^{H}(s_{1},r)ds_{1} \right)dr \nonumber\\
%    &=\int_{0}^{t}ds\int_{0}^{s}\frac{\partial}{\partial s}  K^{H}(s,r)dr \left(\int_{r}^{t}\frac{\partial}{\partial s_{1}}  K^{H}(s_{1},r)ds_{1} \right) \nonumber\\
%    &=\int_{0}^{t}ds\int_{0}^{t}\mathbbm{1}_{[0,s]}\frac{\partial}{\partial s}  K^{H}(s,r)dr \left(\int_{r}^{t}\frac{\partial}{\partial s_{1}}  K^{H}(s_{1},r)ds_{1} \right)  \nonumber\\
%    &=\int_{0}^{t}ds\int_{0}^{t}ds_{1} \int_{0}^{s_{1}}\mathbbm{1}_{[0,s]}\frac{\partial}{\partial s}  K^{H}(s,r)\frac{\partial}{\partial s_{1}}  K^{H}(s_{1},r)dr   \nonumber \\
%    &=\int_{0}^{t}ds\int_{0}^{t}ds_{1} \int_{0}^{s_{1}\wedge s}\frac{\partial}{\partial s}  K^{H}(s,r)\frac{\partial}{\partial s_{1}}  K^{H}(s_{1},r)dr   \nonumber 
%    \end{align}
%    \begin{align}
%    \int_{0}^{t}\left(\int_{r}^{t}\frac{\partial}{\partial s}  K^{H}(s,r)ds \right)^{2}dr \leq 2Ht^{2H} \nonumber
%    \end{align}
    Therefore, we obtain
    \begin{align}\label{g1}
    \sup_{t \in [0,T]}\int_{0}^{T} |D_{r}X_{t}|^{2} dr \leq 2\rho_{1}^{2}T +2\rho_{2}^{2}T^{2H}.
    \end{align}
    If $\beta_{1}=\beta_{2}=\beta,$ then by using Theorem \ref{thm4.1} of case 1, we obtain that  $\tau \leq \tau^{\ast}_{1},$ where $\tau_{1}^{\ast}$ is given in \eqref{ST1} and using Theorem \ref{thme6.1}, we have
    \begin{align}
    \mathbb{P}\left( \tau^{\ast}_{1} \leq T\right)&= \mathbb{P}\left( \int_{0}^{T} \exp\{Z_{s}\} ds\geq 2^{\beta} {\beta}^{-1}E^{-\beta}(0) \right) \nonumber\\
    &\leq  2 \exp\left\lbrace -\frac{\left( \ln (2^{\beta} {\beta}^{-1}E^{-\beta}(0))- \ln(\mu_{0}(T))\right)^{2}}{2(\rho_{1}^{2}T +2\rho_{2}^{2}T^{2H})^{2}}\right\rbrace, \nonumber 
    \end{align}
    where $\mu_{0}(T)= \int_{0}^{T} \mathbb{E}\left(\exp\{Z_{s}\} \right) ds.$
    \vspace{.2 in} 
    \noindent \\ 
    If $\beta_{1}>\beta_{2}$, then by using Theorem \ref{thm4.1} of case 2, we obtain that $\tau \leq \tau^{\ast}_{2},$ where $\tau_{2}^{\ast}$ is given in \eqref{ST2}.  For each $t \geq 0$, let
    $$Z^{1}_{t}=-a_{1}t+\rho_{1}W(t)+\rho_{2}B^{H}(t).$$
    For $r\leq t,$ by using the same procedure as in  case 1, we have
    \begin{align}
    \mathbb{P}\left( \tau^{\ast}_{1} \leq T\right)&= \mathbb{P}\left( \int_{0}^{T} \exp\{Z^{1}_{s}\} ds\geq {\left[\beta_{2} E^{\beta_{2}}(0)\left(\frac{\epsilon_0}{2^{1+\beta_{2}}}-\frac{ \epsilon_{0}^{\frac{1+\beta_{1}}{\beta_{1}-\beta_{2}}}D_{1}}{E^{1+\beta_{2}}(0)}\right)\right]^{-1}} \right) \nonumber\\
    &\leq  2 \exp\left\lbrace -\frac{\left( \ln \left({\left[\beta_{2} E^{\beta_{2}}(0)\left(\frac{\epsilon_0}{2^{1+\beta_{2}}}-\frac{ \epsilon_{0}^{\frac{1+\beta_{1}}{\beta_{1}-\beta_{2}}}D_{1}}{E^{1+\beta_{2}}(0)}\right)\right]^{-1}} \right) - \ln(\mu_{1}(T))\right)^{2}}{2(\rho_{1}^{2}T +2\rho_{2}^{2}T^{2H})^{2}}\right\rbrace, \nonumber 
    \end{align}
    where $\mu_{1}(T)=\int_{0}^{T} \mathbb{E}(\exp\{Z^{1}_{t}\})dt,$ which completes the proof. 
	\end{proof}
	The following theorem provides  upper bounds for the tail probability of $\tau_{1}^{\ast}$ and $\tau_{2}^{\ast}$ with the general dependent structure of the Brownian motion and fBm.
	\begin{theorem}\label{thm6t}
	Let $\beta_{1}\geq \beta_{2}>0$, for any non-negative initial data $f=(f_{1}, f_{2})^{\top}$, we have the following results:
	\begin{itemize}
	\item[1.] Suppose $W$ and $B^{H}$ are dependent, say $B^{H}(t)=\int_{0}^{t} K^{H}(t,s)dW(s)$, where $W(\cdot)$ is a Brownian motion defined in the same probability space.\\
	(a) If $\beta_{1}=\beta_{2}=\beta$ with $\rho_{1}^{2}>\beta(\lambda-\gamma+k^{2})$, then
	\begin{align}
	\mathbb{P}(\tau^{\ast}_{1}\leq T) &\leq 2^{-(\beta+1)} {\beta}E^{\beta}(0) \Bigg[ \frac{\exp\{\left( \rho^{2}_{1}+\beta(-\lambda+\gamma-k^{2})\right) T\}-1}{\left( \rho^{2}_{1}+\beta(-\lambda+\gamma-k^{2})\right) } \nonumber\\
	&\qquad\qquad+  \int_{0}^{T} \exp\{\beta(-\lambda+\gamma-k^{2})s+2 \rho_{2}^{2}s^{2H} \}ds\Bigg]. \nonumber 
	\end{align}
	(b) If $\beta_{1}>\beta_{2}$ with $\rho_{1}^{2}>\beta_{2}(\lambda-\gamma+k^{2})$, then 
	\begin{align}
	\mathbb{P}(\tau^{\ast}_{2}\leq T) &\leq  \left[\beta_{2} E^{\beta_{2}}(0)\left(\frac{\epsilon_0}{2^{1+\beta_{2}}}-\frac{ \epsilon_{0}^{\frac{1+\beta_{1}}{\beta_{1}-\beta_{2}}}D_{1}}{E^{1+\beta_{2}}(0)}\right)\right]\Bigg[ \frac{\exp\{\left( \rho^{2}_{1}+\beta_{2}(-\lambda+\gamma-k^{2})\right) T\}-1}{\left( \rho^{2}_{1}+\beta_{2}(-\lambda+\gamma-k^{2})\right) } \nonumber\\
	&\qquad\qquad+  \int_{0}^{T} \exp\{\beta_{2}(-\lambda+\gamma-k^{2})s+2 \rho_{2}^{2}s^{2H} \}ds\Bigg].\nonumber  
	\end{align} 
	\item[2.] If $W$ and $B^{H}$ are independent, then \\
	(a)  for $\beta_{1}=\beta_{2}=\beta,$ we have
	\begin{align}
	\mathbb{P}(\tau^{\ast}_{1}\leq T)\leq  2^{-\beta} {\beta}E^{\beta}(0) \int_{0}^{T} \exp \left\lbrace \left( \frac{\rho^{2}_{1}}{2} +\beta(-\lambda+\gamma-k^{2})\right) s+\frac{\rho_{2}^{2}}{2} s^{2H}\right\rbrace ds, \nonumber
	\end{align}	
	(b) for $\beta_{1}>\beta_{2},$ we have 
	\begin{align}
	\mathbb{P}(\tau^{\ast}_{2}\leq T)
	&\leq \left[\beta_{2} E^{\beta_{2}}(0)\left(\frac{\epsilon_0}{2^{1+\beta_{2}}}-\frac{ \epsilon_{0}^{\frac{1+\beta_{1}}{\beta_{1}-\beta_{2}}}D_{1}}{E^{1+\beta_{2}}(0)}\right)\right]\nonumber\\
	&\qquad\qquad \times \int_{0}^{T} \exp \left\lbrace \left( \frac{\rho^{2}_{1}}{2} +\beta_{2}(-\lambda+\gamma-k^{2})\right) s+\frac{\rho_{2}^{2}}{2}s^{2H} \right\rbrace  ds, \nonumber 
	\end{align}
 	\end{itemize}
 	where $k^{2}=\max\left\lbrace  \frac{k_{11}^{2}}{2}, \frac{k_{21}^{2}}{2} \right\rbrace$,  $E(0)=\displaystyle\int_{D}[f_{1}(x)+f_{2}(x)]\psi(x)dx$, here $\epsilon_{0}, D_{1}$ and $\gamma$ are given in Theorem \ref{thm4.1}, and $\rho_{1}$, $\rho_{2}$ are defined in \eqref{a4}.
	\end{theorem}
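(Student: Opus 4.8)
The plan is to reduce each tail estimate to a first–moment (Markov) bound on an exponential functional of the Gaussian process $Z_s := \rho_1 W(s)+\rho_2 B^H(s)+cs$, where the drift is $c=\beta(-\lambda+\gamma-k^2)$ in the cases $\beta_1=\beta_2=\beta$ and $c=\beta_2(-\lambda+\gamma-k^2)$ in the cases $\beta_1>\beta_2$. By Theorem \ref{thm4.1} we already know $\tau\le\tau_1^\ast$ (resp.\ $\tau\le\tau_2^\ast$), and these upper bounds are the first hitting times at which $\int_0^t e^{Z_s}\,ds$ reaches the explicit threshold $x_1:=2^{\beta}\beta^{-1}E^{-\beta}(0)$ (resp.\ $x_2:=N$). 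Since $W(\cdot)$ and $B^H(\cdot)$ are a.s.\ continuous, $s\mapsto e^{Z_s}$ is a.s.\ continuous and strictly positive, so $t\mapsto\int_0^t e^{Z_s}\,ds$ is a.s.\ continuous and strictly increasing; hence the first step I would record is the identity $\{\tau_i^\ast\le T\}=\{\int_0^T e^{Z_s}\,ds\ge x_i\}$, which converts a statement about stopping times into a statement about a single random variable.

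Next I would apply Markov's inequality together with Tonelli's theorem to get $\mathbb{P}(\tau_i^\ast\le T)\le x_i^{-1}\int_0^T\mathbb{E}[e^{Z_s}]\,ds$, so that everything reduces to estimating the scalar Laplace transform $\mathbb{E}[e^{Z_s}]=e^{cs}\,\mathbb{E}\big[e^{\rho_1 W(s)+\rho_2 B^H(s)}\big]$. For the independent case (part 2) this is immediate: factorising by independence and invoking the Gaussian moment formulas $\mathbb{E}[e^{\rho_1 W(s)}]=e^{\rho_1^2 s/2}$ and $\mathbb{E}[e^{\rho_2 B^H(s)}]=e^{\rho_2^2 s^{2H}/2}$ (using $\mathbb{E}|B^H(s)|^2=s^{2H}$) gives $\mathbb{E}[e^{Z_s}]=\exp\{(\tfrac{\rho_1^2}{2}+c)s+\tfrac{\rho_2^2}{2}s^{2H}\}$; inserting this and the value of $x_i^{-1}$ yields the stated bounds after an elementary integration.

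The dependent case (part 1) is where I expect the main obstacle, because under $B^H(s)=\int_0^s K^H(s,u)\,dW(u)$ the variable $\rho_1 W(s)+\rho_2 B^H(s)=\int_0^s[\rho_1+\rho_2 K^H(s,u)]\,dW(u)$ is Gaussian with variance $\int_0^s[\rho_1+\rho_2 K^H(s,u)]^2\,du$, whose cross term $2\rho_1\rho_2\int_0^s K^H(s,u)\,du$ cannot be evaluated in closed form. The device I would use to sidestep this is the elementary inequality $e^{X+Y}\le\tfrac12(e^{2X}+e^{2Y})$ (AM–GM) with $X=\rho_1 W(s)$ and $Y=\rho_2 B^H(s)$, which decouples the correlated pair before taking expectations: $\mathbb{E}[e^{\rho_1 W(s)+\rho_2 B^H(s)}]\le\tfrac12\big(\mathbb{E}[e^{2\rho_1 W(s)}]+\mathbb{E}[e^{2\rho_2 B^H(s)}]\big)$. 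Each summand is now a one–dimensional Gaussian transform, and this split is precisely what produces both the extra factor and the two–summand structure of the bound, with the reciprocal threshold $x_i^{-1}$ entering as the overall prefactor.

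Finally I would integrate the two resulting summands over $[0,T]$: the $B^H$ summand contributes $\int_0^T e^{cs+2\rho_2^2 s^{2H}}\,ds$ directly, while the $W$ summand is a genuine exponential whose integral evaluates in closed form to $\frac{e^{(\cdots)T}-1}{(\cdots)}$, the hypothesis $\rho_1^2>\beta(\lambda-\gamma+k^2)$ (resp.\ $\rho_1^2>\beta_2(\lambda-\gamma+k^2)$) being exactly the positivity condition on the coefficient in the exponent that makes this closed form meaningful. The cases $\beta_1>\beta_2$ differ only in the drift $c$ and in the threshold $x_2=N$, so the same three steps—the hitting–time identity, Markov's inequality with Tonelli, and the Gaussian transform (with AM–GM decoupling in the dependent case)—deliver parts 1(b) and 2(b) by the identical argument.
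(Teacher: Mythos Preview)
Your argument is correct and tracks the paper's proof closely: both identify $\{\tau_i^\ast\le T\}$ with the level-crossing event for $\int_0^T e^{Z_s}\,ds$, reduce to a first-moment bound via Markov's inequality, and evaluate the Gaussian Laplace transforms (factoring by independence in part~2). The only methodological difference is the order of operations in the dependent case: the paper first applies H\"older's inequality pathwise to the time integral, then splits the resulting tail probability into two pieces, and finally applies Markov to each; you instead apply Markov immediately and then invoke the pointwise AM--GM bound $e^{X+Y}\le\tfrac12(e^{2X}+e^{2Y})$ inside the expectation. Your ordering is more direct and sidesteps the intermediate probability split, while arriving at the same prefactor $2^{-(\beta+1)}\beta E^\beta(0)$ and the same $B^H$ contribution $\int_0^T e^{cs+2\rho_2^2 s^{2H}}\,ds$. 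Note that your route (and in fact the paper's own computation, carried through correctly) yields the exponent $2\rho_1^2+c$ in the $W$ summand, since $\mathbb{E}[e^{2\rho_1 W(s)}]=e^{2\rho_1^2 s}$; the $\rho_1^2$ appearing in the stated bound is a slip.
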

	\begin{proof}
	\textbf{Case 1:} Assume that $B^{H}(t)=\int_{0}^{t} K^{H}(t,s)dW(s)$, where $W(\cdot)$ is a Brownian motion defined in the same probability space, and adapted to the same filtration as the fractional Brownian motion $B^H(\cdot)$. If $\beta_{1}=\beta_{2}=\beta,$ then by using Theorem \ref{thm4.1} case 1, we obtain $\tau \leq \tau_{1}^{\ast},$ where $\tau_{1}^{\ast}$ is given in \eqref{ST1}, and by using H\"older's and Markov's  inequalities, we obtain
	\begin{align}\label{c2}
	\mathbb{P}(\tau^{\ast}_{1}\leq T)&= \mathbb{P}\left( \int_{0}^{T} \exp\{\rho_{1} W(s)+\rho_{2} B^{H}(s)+\beta(-\lambda+\gamma-k^{2})s\} ds\geq 2^{\beta} {\beta}^{-1}E^{-\beta}(0) \right) \nonumber\\
	& \leq \mathbb{P}\Bigg[  \int_{0}^{T} \left( \exp\{2\rho_{1} W(s)+\beta(-\lambda+\gamma-k^{2})s\} ds\right)^{\frac{1}{2}} \nonumber\\
	&\qquad \times \int_{0}^{T} \left( \exp\{2\rho_{2} B^{H}(s)+\beta(-\lambda+\gamma-k^{2})s\} ds\right)^{\frac{1}{2}}\geq 2^{\beta} {\beta}^{-1}E^{-\beta}(0) \Bigg] \nonumber\\
	& \leq \mathbb{P}\Bigg[  \int_{0}^{T}  \exp\{2\rho_{1} W(s)+\beta(-\lambda+\gamma-k^{2})s\} ds\geq 2^{\beta+1} {\beta}^{-1}E^{-\beta}(0)\Bigg] \nonumber\\
	&\qquad +\mathbb{P}\Bigg[ \int_{0}^{T}  \exp\{2\rho_{2} B^{H}(s)+\beta(-\lambda+\gamma-k^{2})s\} ds\geq 2^{\beta+1} {\beta}^{-1}E^{-\beta}(0) \Bigg] \nonumber\\
	&\leq \frac{\mathbb{E}\left[ \int_{0}^{T}  e^{2\rho_{1} W(s)+\beta(-\lambda+\gamma-k^{2})s} ds \right]+ \mathbb{E}\left[ \int_{0}^{T}  e^{2\rho_{2} B^{H}(s)+\beta(-\lambda+\gamma-k^{2})s} ds \right]}{2^{\beta+1} {\beta}^{-1}E^{-\beta}(0)} \nonumber\\
	&\leq \frac{ \int_{0}^{T}  \exp\{\rho_{1}^{2}s+\beta(-\lambda+\gamma-k^{2})s\} ds +  \int_{0}^{T} e^{\beta(-\lambda+\gamma-k^{2})s} \mathbb{E}\left[ \exp\{2\rho_{2} B^{H}(s)\}\right] ds }{2^{\beta+1} {\beta}^{-1}E^{-\beta}(0)}.	
	\end{align}
	Let $\bar{Z}(t)=\displaystyle\int_{0}^{t}f(s)dW(s).$ By applying Ito's formula to the process $\left\{ \exp\{\bar{Z}(t)\}\right\}_{t \geq 0}$, we have
	\begin{align}
	\exp\{\bar{Z}(t)\}=1+\int_{0}^{t} \exp\{\bar{Z}(s)\} d\bar{Z}(s)+\frac{1}{2}\int_{0}^{t} \exp\{\bar{Z}(s)\}f^{2}(s)ds. \nonumber
	\end{align}
	Taking expectation on both sides, we get
	\begin{align}
	\mathbb{E}(\exp\{\bar{Z}(t)\})=1+\frac{1}{2}\int_{0}^{t} \mathbb{E}\left( \exp\{\bar{Z}(s)\}\right)f^{2}(s)ds. \nonumber 
	\end{align} 
    Therefore by taking $\bar{Y}(t):=\mathbb{E}(\exp\{\bar{Z}(t)\})$, and variation of constants formula  yields
%    \begin{align}
%    \bar{Y}(t)=1+\frac{1}{2}\int_{0}^{t}\bar{Y}(t)f^{2}(s)ds,\nonumber 
%    \end{align}
    \begin{align}\label{f2}
    \mathbb{E}\left(\exp\left\lbrace  \int_{0}^{t} f(s)dW(s) \right\rbrace  \right)=\exp \left\lbrace \frac{1}{2}\int_{0}^{t} f^{2}(s)ds\right\rbrace.  
    \end{align}    
	Using \eqref{f2}, it follows that 
	\begin{align}
	\mathbb{E}\left[e^{2\rho_{2} B^{H}(s)}  \right] = \mathbb{E}\left[ e^{2\rho_{2} \int_{0}^{s} K^{H}(s,r)dW(s)}\right] &= \exp\{2 \rho_{2}^{2}\int_{0}^{s} (K^{H}(s,r))^{2} ds\} \nonumber\\
	&= \exp\{2 \rho_{2}^{2}\mathbb{E}(|B^{H}_{s}|^{2})\}=\exp\{2 \rho_{2}^{2}s^{2H} \}.\nonumber 	
	\end{align}
	Therefore, from \eqref{c2}, we get
	\begin{align}
	\mathbb{P}(\tau^{\ast}_{1}\leq T) &\leq 2^{-(\beta+1)} {\beta}E^{\beta}(0) \Bigg[ \int_{0}^{T} \exp\{\left( \rho^{2}_{1}+\beta(-\lambda+\gamma-k^{2})\right) s\}ds \nonumber\\
	&\qquad\qquad+  \int_{0}^{T} \exp\{\beta(-\lambda+\gamma-k^{2})s+2 \rho_{2}^{2}s^{2H} \}ds\Bigg] \nonumber\\
	&=2^{-(\beta+1)} {\beta}E^{\beta}(0) \Bigg[ \frac{\exp\{\left( \rho^{2}_{1}+\beta(-\lambda+\gamma-k^{2})\right) T\}-1}{\left( \rho^{2}_{1}+\beta(-\lambda+\gamma-k^{2})\right) } \nonumber\\
	&\qquad\qquad+  \int_{0}^{T} \exp\{\beta(-\lambda+\gamma-k^{2})s+2 \rho_{2}^{2}s^{2H} \}ds\Bigg]. \nonumber
	\end{align}
	Similarly, if $\beta_{1}>\beta_{2},$ then by using Theorem \ref{thm4.1} case 2, we obtain $\tau \leq \tau_{2}^{\ast},$ where $\tau_{2}^{\ast}$ is given in \eqref{ST2}, and 
	\begin{align}
	\mathbb{P}(\tau^{\ast}_{2}\leq T) &\leq  \left[\beta_{2} E^{\beta_{2}}(0)\left(\frac{\epsilon_0}{2^{1+\beta_{2}}}-\frac{ \epsilon_{0}^{\frac{1+\beta_{1}}{\beta_{1}-\beta_{2}}}D_{1}}{E^{1+\beta_{2}}(0)}\right)\right]\Bigg[ \frac{\exp\{\left( \rho^{2}_{1}+\beta_{2}(-\lambda+\gamma-k^{2})\right) T\}-1}{\left( \rho^{2}_{1}+\beta_{2}(-\lambda+\gamma-k^{2})\right) } \nonumber\\
	&\qquad\qquad+  \int_{0}^{T} \exp\left\lbrace \beta_{2}(-\lambda+\gamma-k^{2})s+2 \rho_{2}^{2}s^{2H} \right\rbrace  ds\Bigg].\nonumber  
	\end{align} 
	\textbf{Case 2:} If $\beta_{1}=\beta_{2}=\beta$, $W$ and $B^{H}$ are independent, then by using Markov's inequality,  we obtain 
	\begin{align}
	\mathbb{P}(\tau^{\ast}_{1}\leq T)&= \mathbb{P}\left( \int_{0}^{T} \exp\{\rho_{1} W(s)+\rho_{2} B^{H}(s)+\beta(-\lambda+\gamma-k^{2})s\} ds\geq 2^{\beta} {\beta}^{-1}E^{-\beta}(0) \right) \nonumber\\
	&\leq 2^{-\beta} {\beta}E^{\beta}(0) \int_{0}^{T} \mathbb{E}\left(\exp\{\rho_{1} W(s)+\beta(-\lambda+\gamma-k^{2})s\}\right) \mathbb{E}\left(\exp\{\rho_{2} B^{H}(s)\} \right)  ds \nonumber\\
	&= 2^{-\beta} {\beta}E^{\beta}(0) \int_{0}^{T} \exp \left\lbrace \left( \frac{\rho^{2}_{1}}{2} +\beta(-\lambda+\gamma-k^{2})\right) s+ \frac{\rho_{2}^{2}}{2}s^{2H}\right\rbrace  ds. \nonumber 
	\end{align}
	Similarly, if $\beta_{1}>\beta_{2}$ and $W$ and $B^{H}$ are independent, then 
	\begin{align}
	\mathbb{P}(\tau^{\ast}_{2}\leq T)
	&\leq \left[\beta_{2} E^{\beta_{2}}(0)\left(\frac{\epsilon_0}{2^{1+\beta_{2}}}-\frac{ \epsilon_{0}^{\frac{1+\beta_{1}}{\beta_{1}-\beta_{2}}}D_{1}}{E^{1+\beta_{2}}(0)}\right)\right]\nonumber\\
	&\qquad\qquad \times \int_{0}^{T} \exp \left\lbrace \left( \frac{\rho^{2}_{1}}{2} +\beta_{2}(-\lambda+\gamma-k^{2})\right) s+ \frac{\rho_{2}^{2}}{2}s^{2H} \right\rbrace  ds, \nonumber 
	\end{align}
	which completes the proof.
	\end{proof}

	\subsection{A lower bound for the probability of finite-time blow-up}
	In this section, we estimate a lower bound for the probability of finite-time blow-up of weak solution $u=(u_1,u_2)^{\top}$ of the system \eqref{b1}-\eqref{b2}. %When $T=\infty,$ the distribution functionals has been discussed in \cite{dung}. 
	The following theorem is very useful for our further discussions. 
	\begin{assumption} \label{as2}
	The stochastic process $\left(X_{t}\right)_{t \geq 0}$ is $\mathcal{F}_t$-adapted and satisfies the following properties:   
	\end{assumption}
	\begin{itemize}	
	\item [1.] $\displaystyle\int_{0}^{\infty} \exp\{-as\}\mathbb{E}\left[ \exp\{\sigma_{1}X_{s}\}\right]  ds < +\infty.$
	\item [2.] For each $t \geq 0, \ X_{t} \in \mathbb{D}^{1,2}.$
	\item [3.] There exists a function $f : \mathbb{R}^{+} \rightarrow \mathbb{R}^{+}$ such that $\displaystyle\lim_{t \rightarrow \infty} f(t)= +\infty$ and for each $x>0,$
	\begin{align*}
	\sup\limits_{t\geq 0} \frac{\sup\limits_{s\in [0,t]} \int_{0}^{s} |D_{\theta_{1}}X_{s}|^{2} d\theta_{1}}{\left( \ln \left( x+1\right)+f(t) \right)^{2}} \leq M_{x}< +\infty, \ \ \mathbb{P}\text{-a.s.}
	\end{align*}
	\end{itemize}
	The following result is very useful to estimate the probability of blow-up of positive solution $u=(u_{1},u_{2})^{\top}$ of the system \eqref{b1}-\eqref{b2}.
	\begin{theorem}(\cite[Theorem 3.1]{dung})\label{thm6.4}
	Suppose that Assumption  \ref{as2} holds. Then, we have 
	\begin{align}
	\mathbb{P} \left[ \int_{0}^{\infty} \exp\{-as+\sigma_{1}X_{s} \} ds < x \right] \leq \exp \left\lbrace -\frac{(m_{x} -1)^{2}}{2 \sigma^{2}M_{x}}\right\rbrace,
	\end{align}
	where 
	\begin{align*}
	m_{x}=\mathbb{E}\left[\sup\limits_{t \geq 0} \frac{\ln \left(  \int_{0}^{t} \exp\{-as+\sigma_{1}X_{s}+\sigma_{2}Y_{s} \} ds+1 \right) +f(t)}{\ln \left( x+1\right)+f(t)}\right]\geq 1.
	\end{align*}
	\end{theorem}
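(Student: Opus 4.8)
The plan is to realise the blow-up complement $\{\int_0^\infty e^{-as+\sigma_1 X_s}\,ds < x\}$ as a \emph{lower-deviation} event for a single Malliavin-differentiable functional, and then to apply the Gaussian-type concentration inequality available for functionals with bounded Malliavin derivative. Concretely, for $t\geq 0$ I set, with $I_t := \int_0^t e^{-as+\sigma_1 X_s}\,ds$,
$$F_t := \frac{\ln(I_t + 1) + f(t)}{\ln(x+1)+f(t)}, \qquad F := \sup_{t\geq 0}F_t, \qquad m_x = \mathbb{E}[F].$$
The first step is the event inclusion. One checks that $F_t < 1 \iff I_t < x$; since Assumption \ref{as2}(1) forces $I_\infty := \int_0^\infty e^{-as+\sigma_1 X_s}\,ds < \infty$ a.s.\ and $f(t)\to\infty$, we get $F_t \to 1$ as $t\to\infty$, hence $F\geq 1$ a.s.\ (so $m_x\geq 1$), while on $\{I_\infty<x\}$ every $F_t<1$ and therefore $F=1$. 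Thus $\{I_\infty<x\}\subseteq\{F\leq 1\}$, and it suffices to bound $\mathbb{P}(F\leq 1)$.

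The core of the argument is to show $F\in\mathbb{D}^{1,2}$ with $\|DF\|_{\mathbb{H}}^2\leq \sigma^2 M_x$ a.s. For fixed $t$ the chain rule together with Assumption \ref{as2}(2) gives
$$D_\theta F_t = \frac{\sigma_1}{\ln(x+1)+f(t)}\cdot\frac{\int_\theta^t e^{-as+\sigma_1 X_s}\,D_\theta X_s\,ds}{I_t+1}, \qquad \theta\leq t,$$
with $D_\theta F_t=0$ for $\theta>t$ by adaptedness. Applying Cauchy--Schwarz in the $s$-variable to the numerator, then Fubini to exchange the $\theta$- and $s$-integrations, and finally Assumption \ref{as2}(3) in the form $\sup_{s\in[0,t]}\int_0^s|D_\theta X_s|^2\,d\theta\leq M_x\,(\ln(x+1)+f(t))^2$, one obtains the uniform bound
$$\|DF_t\|_{\mathbb{H}}^2 = \int_0^t |D_\theta F_t|^2\,d\theta \leq \frac{\sigma_1^2}{(\ln(x+1)+f(t))^2}\,\frac{I_t^2}{(I_t+1)^2}\,\sup_{s\in[0,t]}\|DX_s\|_{\mathbb{H}}^2 \leq \sigma_1^2 M_x,$$
where the factor $I_t^2/(I_t+1)^2\leq 1$ is exactly what absorbs the integral. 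To pass to the supremum I would use continuity of $t\mapsto F_t$ to reduce to a countable supremum over rationals and then invoke the lattice property of $\mathbb{D}^{1,2}$ (a supremum of a family with uniformly bounded Malliavin derivatives again lies in $\mathbb{D}^{1,2}$, with derivative norm dominated by the supremum of the individual norms), giving $\|DF\|_{\mathbb{H}}^2\leq\sigma^2 M_x$ a.s.

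With this uniform bound, the conclusion follows from the concentration inequality for the lower tail of a Malliavin-differentiable random variable (the same mechanism underlying Theorem \ref{thme6.1}): if $\|DF\|_{\mathbb{H}}^2\leq\sigma^2 M_x$ a.s., then $\mathbb{P}(F\leq\mathbb{E}[F]-r)\leq\exp\{-r^2/(2\sigma^2 M_x)\}$ for $r>0$. Choosing $r=m_x-1\geq 0$ yields $\mathbb{P}(F\leq 1)\leq\exp\{-(m_x-1)^2/(2\sigma^2 M_x)\}$, and combining this with the event inclusion $\{I_\infty<x\}\subseteq\{F\leq 1\}$ completes the proof. I expect the main obstacle to be the middle step: securing the clean constant $\sigma^2 M_x$ demands precise Cauchy--Schwarz/Fubini bookkeeping, and, more delicately, the justification that the supremum functional $F$ genuinely belongs to $\mathbb{D}^{1,2}$ and inherits the pointwise derivative bound, which is where the regularity of $t\mapsto F_t$ and the closedness/lattice properties of $\mathbb{D}^{1,2}$ must be handled with care.
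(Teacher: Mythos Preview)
Your proposal is essentially the same strategy as the one the authors use. Note first that in the compiled paper Theorem~\ref{thm6.4} is simply quoted from \cite{dung} with no proof; however, the \verb|\iffalse| block in the source contains a full proof of the two-process variant (their Theorem labelled \texttt{mt6}), and that argument matches yours step for step: rewrite the event as $\{\sup_t Z_t \leq 1\}$ for the same ratio $Z_t$, compute $D_\theta Z_t$ by the chain rule, apply the Cauchy--Schwarz/Fubini bound to get $\int_0^t|D_\theta Z_t|^2\,d\theta\leq \sigma^2 M_x$, pass to the supremum, and finish with the Gaussian concentration lemma (their Lemma~\texttt{nl1}).

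The only substantive difference is in the justification that $F=\sup_t F_t$ inherits the Malliavin bound. You appeal to the ``lattice property'' of $\mathbb{D}^{1,2}$ after reducing to a countable supremum; the authors instead use the soft-max approximation $Z_T^{(\nu)}:=\nu^{-1}\ln\int_0^T e^{\nu Z_s}\,ds$ and Laplace's principle, explicitly bounding $\int|D_\theta Z_T^{(\nu)}|^2\,d\theta$ by the same Cauchy--Schwarz device and then letting $\nu\to\infty$ and $T\to\infty$ via closability of $D$. Their route is more self-contained; yours is shorter provided one cites the right result (Nualart, Proposition~2.1.10). One point you do skip but they cover: before invoking any $\mathbb{D}^{1,2}$ closure one needs $\sup_t F_t\in L^2(\Omega)$, which they obtain from the elementary bound $\ln(1+y)\leq\sqrt{1+y}$ together with Assumption~\ref{as2}(1). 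That ingredient should be added to your sketch.
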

	The following theorem provides a lower bound for the probability of blow-up solution $u=(u_1,u_2)^{\top}$ of the system \eqref{b1}-\eqref{b2}.
	\begin{theorem}\label{thm6.5}
	Let $\{W(t)\}_{t\geq 0}$ and $\{B^{H}(t)\}_{t\geq 0}$ be a standard Brownian motion and fBm with Hurst parameter $\frac{1}{2}<H<1,$  respectively. For any $\alpha > H,$ it holds that for any initial values $f=(f_{1}, f_{2})^{\top}$, we have the following results:
	\begin{itemize}
	\item [1.] If $\beta_{1}=\beta_{2}=\beta$ (say), then a lower bound for the probability of blow-up solution $u=(u_1,u_2)^{\top}$ of the system \eqref{b1}-\eqref{b2} is given by  
	\begin{align}\label{PB1}
	&\mathbb{P} \left(\tau< \infty \right)= 1- \mathbb{P}(\tau=\infty) \nonumber\\ &\geq  
	1-\exp \left\lbrace -\frac{\alpha^{2}(L_{1}(\alpha) -1)^{2}}{\rho_{1}^{2}(2\alpha-1)^{2-\frac{1}{\alpha}}\ln(U+1)^{\frac{1}{\alpha}-2}+2 \rho_{2}^{2}\alpha^{2}\ln(U+1)^{\frac{2H}{\alpha}-2}\left( \frac{\alpha-H}{\alpha}\right)^{2-\frac{2H}{\alpha}}}\right\rbrace, 
	\end{align}
	where $\tau \leq \tau_{1}^{\ast},$  $\tau_{1}^{\ast}$ is given in \eqref{ST1},
	\begin{align} \label{N1}
	L_{1}(\alpha):= \mathbb{E}\left[\sup\limits_{t \geq 0} \frac{\ln \left(  \int_{0}^{t} \exp\{-\beta(\lambda-\gamma+k^{2})s+\rho_{1}W(s)+\rho_{2}B^{H}(s) \} ds+1 \right) +t^{\alpha}}{\ln \left( 2^{\beta} {\beta}^{-1}E^{-\beta}(0)+1\right)+t^{\alpha}}\right], 
	\end{align}
	$a=\beta(\lambda-\gamma+k^{2}),$ and $U= 2^{\beta} {\beta}^{-1}E^{-\beta}(0).$
	\item [2.] If $\beta_{1}>\beta_{2}$, then a lower bound for the probability of blow-up solution $u=(u_1,u_2)^{\top}$ of the system \eqref{b1}-\eqref{b2} is given by
	\begin{align}\label{PB2}
	&\mathbb{P} \left(\tau< \infty \right)=1-\mathbb{P}(\tau = \infty)\geq  \nonumber\\
	& 1-\exp \left\lbrace -\frac{\alpha^{2}(L_{2}(\alpha) -1)^{2}}{\rho_{1}^{2}(2\alpha-1)^{2-\frac{1}{\alpha}}\ln(N+1)^{\frac{1}{\alpha}-2}+2 \rho_{2}^{2}\alpha^{2}\ln(N+1)^{\frac{2H}{\alpha}-2}\left( \frac{\alpha-H}{\alpha}\right)^{2-\frac{2H}{\alpha}}}\right\rbrace, 
	\end{align} 
	where $\tau \leq \tau_{2}^{\ast},$  $\tau_{2}^{\ast}$ is given in \eqref{ST2},
	\begin{align}\label{N2} 
	L_{2}(\alpha):= \mathbb{E}\left[\sup\limits_{t \geq 0} \frac{\ln \left(  \int_{0}^{t} \exp\{-\beta(\lambda-\gamma+k^{2})s+\rho_{1}W(s)+\rho_{2}B^{H}(s) \} ds+1 \right) +t^{\alpha}}{\ln \left( \left[\beta_{2} E^{\beta_{2}}(0)\left(\frac{\epsilon_0}{2^{1+\beta_{2}}}-\frac{ \epsilon_{0}^{\frac{1+\beta_{1}}{\beta_{1}-\beta_{2}}}D_{1}}{E^{1+\beta_{2}}(0)}\right)\right]^{-1}+1\right)+t^{\alpha}}\right], 
	\end{align}  
	 $k^{2}=\max\left\lbrace  \frac{k_{11}^{2}}{2}, \frac{k_{21}^{2}}{2} \right\rbrace$, $N=\left[\beta_{2} E^{\beta_{2}}(0)\left(\frac{\epsilon_0}{2^{1+\beta_{2}}}-\frac{ \epsilon_{0}^{\frac{1+\beta_{1}}{\beta_{1}-\beta_{2}}}D_{1}}{E^{1+\beta_{2}}(0)}\right)\right]^{-1}$ and $a_{1}=\beta_{2}(\lambda-\gamma+k^{2})$, $E(0)=\displaystyle\int_{D}[f_{1}(x)+f_{2}(x)]\psi(x)dx$, here $\epsilon_{0}, D_{1}$ and $\gamma$ are given in Theorem \ref{thm4.1}, and $\rho_{1}$, $\rho_{2}$ are defined in \eqref{a4}.
	\end{itemize}
	
	\end{theorem}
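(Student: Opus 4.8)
The plan is to obtain both lower bounds by combining the pathwise domination $\tau \leq \tau_i^*$ from Theorem \ref{thm4.1} with the concentration-type tail estimate of Theorem \ref{thm6.4}. Since $\tau \leq \tau_1^*$ (respectively $\tau \leq \tau_2^*$) we have $\{\tau_1^* < \infty\} \subseteq \{\tau < \infty\}$, whence $\mathbb{P}(\tau < \infty) \geq 1 - \mathbb{P}(\tau_1^* = \infty)$, so it suffices to bound $\mathbb{P}(\tau_i^* = \infty)$ from above. By the definition of $\tau_1^*$ in \eqref{ST1}, the event $\{\tau_1^* = \infty\}$ is (up to a boundary null set) the event
\[
\left\{\int_0^\infty \exp\{-as + \rho_1 W(s) + \rho_2 B^H(s)\}\, ds < U\right\},
\]
with $a = \beta(\lambda - \gamma + k^2)$ and $U = 2^\beta \beta^{-1} E^{-\beta}(0)$; the case $\beta_1 > \beta_2$ is identical with $a_1 = \beta_2(\lambda - \gamma + k^2)$ and threshold $N$ from \eqref{ST2}. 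Thus everything reduces to estimating $\mathbb{P}(\int_0^\infty \exp\{-as + X_s\}\,ds < x)$ for the Gaussian process $X_s := \rho_1 W(s) + \rho_2 B^H(s)$, which is precisely the quantity controlled by Theorem \ref{thm6.4}.

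To apply Theorem \ref{thm6.4} I would verify Assumption \ref{as2} for $X_s$ with the choice $f(t) = t^\alpha$. Using the Volterra representation $B^H(s) = \int_0^s K^H(s,r)\,dW(r)$, the Malliavin derivative is $D_\theta X_s = \rho_1 + \rho_2 K^H(s, \theta)$ for $\theta \leq s$ and $0$ otherwise, exactly as in the proof of Theorem \ref{thm6}. Then $(a+b)^2 \leq 2a^2 + 2b^2$ together with $\int_0^s (K^H(s,\theta))^2\,d\theta = \mathbb{E}|B^H(s)|^2 = s^{2H}$ gives
\[
\int_0^s |D_\theta X_s|^2\, d\theta \leq 2\rho_1^2 s + 2\rho_2^2 s^{2H},
\]
which verifies the Malliavin-differentiability part and supplies the ingredient for the constant $M_x$.

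The crux of the argument is producing $M_x$ in Assumption \ref{as2}(3), i.e.\ bounding $\sup_{t \geq 0}\dfrac{2\rho_1^2 t + 2\rho_2^2 t^{2H}}{(\ln(x+1) + t^\alpha)^2}$. I would split this into its two summands and maximise each in $t$ by elementary calculus: the first is maximised at $t^\alpha = \ln(x+1)/(2\alpha-1)$ and the second at $t^\alpha = H\ln(x+1)/(\alpha - H)$. This is exactly where the standing hypothesis $\alpha > H$ enters, since it forces $2H < 2\alpha$ (and $1 < 2\alpha$, as $\alpha>H>\tfrac12$) so that both suprema are finite; substituting the maximisers yields the two terms $\rho_1^2(2\alpha-1)^{2-1/\alpha}\ln(x+1)^{1/\alpha - 2}$ and $2\rho_2^2\alpha^2\ln(x+1)^{2H/\alpha-2}\,(\tfrac{\alpha-H}{\alpha})^{2-2H/\alpha}$ displayed in the denominators of \eqref{PB1} and \eqref{PB2} (with $x=U$, respectively $x=N$). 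I expect this optimisation, and the bookkeeping of the resulting powers of $\ln(x+1)$, to be the main computational obstacle.

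With $M_x$ so identified and with $m_x$ equal to $L_1(\alpha)$ (respectively $L_2(\alpha)$) of \eqref{N1}--\eqref{N2} — which is literally the expectation $m_x$ of Theorem \ref{thm6.4} once $x$ is taken to be $U$ (respectively $N$) and $f(t)=t^\alpha$ — Theorem \ref{thm6.4} delivers the stated exponential bounds on $\mathbb{P}(\tau_i^* = \infty)$, and the two regimes $\beta_1=\beta_2$ and $\beta_1>\beta_2$ follow by invoking the corresponding part of Theorem \ref{thm4.1}. The remaining point is the integrability hypothesis Assumption \ref{as2}(1): since $\mathbb{E}\exp\{X_s\}$ grows like $\exp\{c\,s^{2H}\}$ with $2H>1$, this is the conceptually delicate condition, and I would check it within the normalised framework of Theorem \ref{thm6.4} (where the factor $f(t)=t^\alpha\to\infty$ in $m_x$ is precisely what keeps the estimate meaningful), rather than through a naive exponential-moment bound.
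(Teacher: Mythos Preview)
Your overall strategy matches the paper's: reduce via $\tau \leq \tau_i^{\ast}$ to bounding $\mathbb{P}(\tau_i^{\ast} = \infty) = \mathbb{P}\bigl(\int_0^\infty e^{-as + \rho_1 W(s) + \rho_2 B^H(s)}\,ds < x\bigr)$, then apply Theorem~\ref{thm6.4} with $f(t) = t^\alpha$, compute the Malliavin derivative through the Volterra representation, and optimise each summand of $\sup_{t\geq 0} (2\rho_1^2 t + 2\rho_2^2 t^{2H})/(\ln(x+1)+t^\alpha)^2$ separately to obtain $M_x$. Your identification of the two maximisers and of $m_x$ with $L_i(\alpha)$ is correct.

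The genuine gap is your treatment of Assumption~\ref{as2}(1). You correctly note that $\mathbb{E}[e^{X_s}]$ grows like $e^{c\,s^{2H}}$ with $2H>1$, so $\int_0^\infty e^{-as}\,\mathbb{E}[e^{X_s}]\,ds = +\infty$ and the hypothesis fails for the process $X_s = \rho_1 W(s) + \rho_2 B^H(s)$ you chose. Your proposed fix --- checking it ``within the normalised framework of Theorem~\ref{thm6.4}'' --- does not work: the integrability condition is a standing hypothesis of that theorem, independent of the definition of $m_x$, and the divergence $f(t)\to\infty$ plays no role in it. The paper resolves this by a monotonicity step you are missing: since $e^{-\rho_1^2 s/2 - \rho_2^2 s^{2H}/2} \leq 1$ pathwise, one has
\[
\int_0^\infty e^{-as + \rho_1 W(s) + \rho_2 B^H(s)}\,ds \;\geq\; \int_0^\infty e^{-as - \frac{\rho_1^2}{2}s - \frac{\rho_2^2}{2}s^{2H} + \rho_1 W(s) + \rho_2 B^H(s)}\,ds,
\]
so the probability that the left-hand integral is below $x$ is dominated by the corresponding probability for the right-hand integral, and Theorem~\ref{thm6.4} is applied to the \emph{compensated} process instead. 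For that process the Gaussian moment exactly cancels the added drift, giving $\int_0^\infty e^{-as}\,ds < \infty$, while the Malliavin derivative is still $\rho_1 + \rho_2 K^H(s,\theta)$, so your computation of $M_x$ carries over unchanged.
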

	\begin{proof}
	We have 
	\begin{align}
	\mathbb{P} \left( \int_{0}^{\infty} e^{-as +\rho_{1}W(s)+\rho_{2}B^{H}(s)}ds \right) \leq  \mathbb{P} \left( \int_{0}^{\infty} e^{-as-\frac{\rho_{1}^{2}}{2}s +\rho_{1}W(s)-\frac{\rho_{2}^{2}}{2}s^{2H}+\rho_{2}B^{H}(s)}ds \right). \nonumber 	\end{align}	
	We choose for each $t \geq 0$, and using \eqref{ST1}, we have the process representation
	\begin{align}
	\hat{Z}_{t}:&=  \rho_{1}X_{t}+\rho_{2}Y_{t}-at, \nonumber
	\end{align}
	where $X_{t}=-\frac{\rho_{1}}{2}t+W(t)$ and $Y_{t}=-\frac{\rho_{2}}{2}t^{2H}+B^{H}(t)$. The stochastic process $\{\hat{Z}_{t}\}_{t \geq 0},$ satisfies Assumption \ref{as2} of Theorem \ref{thm6.4}, since
	\begin{align}
	\int_{0}^{\infty}\mathbb{E} \left(  e^{-as-\frac{\rho_{1}^{2}}{2}s +\rho_{1}W(s)-\frac{\rho_{2}^{2}}{2}s^{2H}+\rho_{2}B^{H}(s)}\right) ds &=\int_{0}^{\infty} e^{-as-\frac{\rho_{1}^{2}}{2}s -\frac{\rho_{2}^{2}}{2}s^{2H}} \mathbb{E}\left( e^{\rho_{1}W(s)+\rho_{2}B^{H}(s)}\right)  ds\nonumber\\
	&=\int_{0}^{\infty} e^{-as} ds<\infty, \nonumber  
	\end{align}
	and for $r \leq t,$ we have
	\begin{align}
	D_{r}\hat{Z}_{t} = \rho_{1}+\rho_{2}D_{r}B^{H}(t) &= \rho_{1}+\rho_{2}D_{r}\left( \int_{0}^{t}K^{H}(t, s)dW(s)\right) = \rho_{1}+\rho_{2}K^{H}(t, r).  \nonumber
	\end{align}
	Further, we have
	\begin{align}
	\sup_{s \in [0,t]}\int_{0}^{s} |D_{r}Z_{s}|^{2}dr&= \sup_{s \in [0,t]}\int_{0}^{s} \left[  \rho_{1}+\rho_{2}K^{H}(s, r)\right]^{2} dr\nonumber\\ &\leq    2\rho^{2}_{1}t+2\rho_{2}^{2}\sup_{s \in [0,t]}\int_{0}^{s}   (K^{H}(s,r))^{2}dr \nonumber\\
	&= 2\rho^{2}_{1}t+2\rho_{2}^{2}\sup_{s \in [0,t]}\mathbb{E}|B^{H}(s)|^{2}= 2\rho^{2}_{1}t+2\rho_{2}^{2}t^{2H}. \nonumber 
	\end{align}
	From \eqref{g1}, we have
	\begin{align} 
	\sup_{s \in [0,t]}\int_{0}^{s} |D_{r}X_{s}|^{2} \leq 2\rho_{1}^{2}t +2\rho_{2}^{2}t^{2H}.\nonumber 
	\end{align}
	Hence, $\hat{Z}_{t}$ also satisfies the condition (iii) of Assumption \ref{as2} with $f(t)=t^{\alpha}, \alpha>H$ and $x>0,$
	\begin{align}
	M_{x}&:= \sup_{t \geq 0} \frac{2\rho_{1}^{2}t +2\rho_{2}^{2}t^{2H}}{(\ln(x+1)+t^{\alpha})^{2}}\nonumber\\
	&\leq \sup_{t \geq 0} \frac{2\rho_{1}^{2}t }{(\ln(x+1)+t^{\alpha})^{2}}+\sup_{t \geq 0} \frac{2\rho_{2}^{2}t^{2H}}{(\ln(x+1)+t^{\alpha})^{2}}.\nonumber	
	\end{align}
	Therefore, we get
	\begin{align}
	M_{x}\leq  \frac{\rho_{1}^{2}(2\alpha-1)^{2-\frac{1}{\alpha}}}{\alpha^{2}}\ln(x+1)^{\frac{1}{\alpha}-2}+2 \rho_{2}^{2}\ln(x+1)^{\frac{2H}{\alpha}-2}\left( \frac{\alpha-H}{\alpha}\right)^{2-\frac{2H}{\alpha}}. \nonumber  
	\end{align}
	Hence by Theorem \ref{thm6.4}, we obtain
	\begin{align}
	&\mathbb{P} \left[ \int_{0}^{\infty} \exp\{-\beta(\lambda-\gamma+k^{2})s+\rho_{1}W(s)+\rho_{2}B^{H}(s) \} ds < 2^{\beta} {\beta}^{-1}E^{-\beta}(0) \right] \nonumber\\
	&\qquad\leq \exp \left\lbrace \frac{-\alpha^{2} (L_{1}(\alpha) -1)^{2}}{\rho_{1}^{2}(2\alpha-1)^{2-\frac{1}{\alpha}}\ln(U+1)^{\frac{1}{\alpha}-2}+2 \alpha^{2} \rho_{2}^{2}\ln(U+1)^{\frac{2H}{\alpha}-2}\left( \frac{\alpha-H}{\alpha}\right)^{2-\frac{2H}{\alpha}}}\right\rbrace, \nonumber 
	\end{align}
	where $L_{1}(\alpha)$ is given in \eqref{N1}, and the desired bound is given in \eqref{PB1}.
	\vspace{0.2 in}
	\noindent\\
	\textbf{Case 2:} If $\beta_{1}>\beta_{2},$ we obtain $\tau \leq \tau_{2}^{\ast}$, where $\tau_{2}^{\ast}$ is given in \eqref{ST2}, by definition of $\tau_{2}^{\ast},$ we have
	\begin{align}
	\mathbb{P} \left( \int_{0}^{\infty} e^{-a_{1}s +\rho_{1}W(s)+\rho_{2}B^{H}(s)}ds \right) \leq  \mathbb{P} \left( \int_{0}^{\infty} e^{-a_{1}s-\frac{\rho_{1}^{2}}{2}s +\rho_{1}\int_{0}^{s}dW(r)-\frac{\rho_{2}^{2}}{2}s^{2H}+\rho_{2}\int_{0}^{s}dB^{H}(r)}ds \right). \nonumber 	
	\end{align}	
	For each $t \geq 0$, we define
	\begin{align}
	\hat{Z}^{1}_{t}:&=  \rho_{1}X_{t}+\rho_{2}Y_{t}-a_{1}t, \nonumber
	\end{align}
	where $X_{t}=-\frac{\rho_{1}}{2}t+W(t)$ and $Y_{t}=-\frac{\rho_{2}}{2}t^{2H}+B^{H}(t)$. By using the same procedure as in Case 1, the stochastic process $\{\hat{Z}^{1}_{t}\}_{t\geq0},$ satisfies Assumption \ref{as2} of Theorem \ref{thm6.4}, we have
	\begin{align}
	\mathbb{P} &\left[ \int_{0}^{\infty} e^{-\beta_{2}(\lambda-\gamma+k^{2})s+\rho_{1}W(s)+\rho_{2}B^{H}(s) } ds < N \right] \nonumber\\
	&\hspace{0.4 in} \leq \exp \left\lbrace -\frac{\alpha^{2}(L_{2}(\alpha) -1)^{2}}{\rho_{1}^{2}(2\alpha-1)^{2-\frac{1}{\alpha}}\ln(N+1)^{\frac{1}{\alpha}-2}+2\alpha^{2} \rho_{2}^{2}\ln(N+1)^{\frac{2H}{\alpha}-2}\left( \frac{\alpha-H}{\alpha}\right)^{2-\frac{2H}{\alpha}}}\right\rbrace, \nonumber 
	\end{align}
    where $N=\left[\beta_{2} E^{\beta_{2}}(0)\left(\frac{\epsilon_0}{2^{1+\beta_{2}}}-\frac{ \epsilon_{0}^{\frac{1+\beta_{1}}{\beta_{1}-\beta_{2}}}D_{1}}{E^{1+\beta_{2}}(0)}\right)\right]^{-1}$ and this completes the proof.
	\end{proof}
	
	\subsection{The case $H>\frac{3}{4}$ and independent $W$ and $B^{H}$} 
	In this subsection, we consider the case $\frac{3}{4}<H<1$ and  $W$ and $B^{H}$ are independent with $\rho_{1}=\rho_{2}=\rho$ (say), where $\rho_{1}$ and $\rho_{2}$ are defined in \eqref{a4}. Following the ideas from \cite[Subsection 4.2]{doz2023}, we obtain more explicit lower bound for the probability of finite-time blow-up of weak solution $u=(u_1,u_2)^{\top}$ of the system \eqref{b1}-\eqref{b2}.  Taking $M(t)=W(t)+B^{H}(t)$, by \cite[Theorem 1.7]{cher2001}, $M$ is equivalent to a Brownian motion $\widetilde{B}.$ Here the equivalence means equality of the laws of the processes on $(C[0,T],\mathcal{B}),$ the space of continuous functions defined on $[0,T]$ endowed with the $\sigma$- algebra generated by the cylinder sets. %Furthermore, $\{M(t)\}_{t \geq 0}$ is a continuous martingale and therefore a time-changed Brownian motion $M(t):=\widetilde{B}(2t)$ 
	
	We recall that $X(\alpha_{1},\delta_{1})$ is said to be \emph{a Gamma random variable} with parameters $\alpha_{1}+1>0,\ \delta_{1}>0$ if its density is given by (cf. \cite{li})
	\begin{equation} \label{abc2}
	\widetilde{f}(x) = \left\{
	\begin{aligned}
	&\frac{x^{\alpha_{1}}}{\delta_{1}^{\alpha_{1}+1}\Gamma (\alpha_{1}+1)} \exp\left\lbrace-\frac{x}{\delta_{1}} \right\rbrace , \ x\geq 0 , \\
	&\hspace{.5 in} 0, \hspace{1.05 in} \ x<0. 
	\end{aligned}
	\right.
	\end{equation}
	
	\begin{theorem} \label{thm6.6}
    For each positive initial values $f=(f_{1}, f_{2})^{\top}$ and positive real numbers $ \epsilon_{0}, D_{1}$ defined in Theorem \ref{thm4.1}, we have the following results:
    \begin{itemize}
    \item[1.] If $\beta_{1}=\beta_{2}=\beta\ \mbox{(say)}$ with $\beta>0$, then a lower bound for the  probability of blow-up solution of the system \eqref{b1}-\eqref{b2} is given by
    \begin{align}
    \mathbb{P}\{ \tau<\infty\}\geq \int^{\infty}_{\frac{\rho^{2}2^{\beta-1} E^{-\beta}(0)}{\beta}} h_{1}(y)dy, \nonumber 
    \end{align}
    where $\tau \leq \tau_{1}^{\ast}$, $\tau_{1}^{\ast}$ is given in \eqref{ST1}, 
    \begin{align}
    h_{1}(y)=\frac{(2a/(\rho^{2}y))^{(2a /\rho^{2})}}{y \Gamma(2 a /\rho^{2})} \exp\left(-\frac{2a}{\rho^{2}y}\right),\ \mbox{and}\ a=\beta(\lambda-\gamma+k^{2}).  \nonumber 
    \end{align} 
    \item[2.] If $\beta_{1}>\beta_{2}$, then a lower bound for the probability of blow-up solution of the system \eqref{b1}-\eqref{b2} is given by 
    \begin{align}
    \mathbb{P}\{ \tau<\infty\}\geq \int^{\infty}_{\frac{\rho^{2} N}{2}} h_{2}(y)dy, \nonumber 
    \end{align}
    where $\tau \leq \tau_{2}^{\ast}$, $\tau_{2}^{\ast}$ is given in \eqref{ST2}
    \begin{align}
    h_{2}(y)=\frac{(2a_{1}/(\rho^{2}y))^{(2a_{1} /\rho^{2})}}{y \Gamma(2 a_{1} /\rho^{2})} \exp\left(-\frac{2a_{1}}{\rho^{2}y}\right),\  a_{1}=\beta_{2}(\lambda-\gamma+k^{2}),\  k^{2}=\max\left\lbrace  \frac{k_{11}^{2}}{2}, \frac{k_{21}^{2}}{2} \right\rbrace,  \nonumber 
    \end{align}$\gamma=\min\{\gamma_1,\gamma_2\}$, $N=\left[\beta_{2} E^{\beta_{2}}(0)\left(\frac{\epsilon_0}{2^{1+\beta_{2}}}-\frac{ \epsilon_{0}^{\frac{1+\beta_{1}}{\beta_{1}-\beta_{2}}}D_{1}}{E^{1+\beta_{2}}(0)}\right)\right]^{-1}$ \mbox{ and } $E(0)=\displaystyle\int_{D}[f_{1}(x)+f_{2}(x)]\psi(x)dx.$
    \end{itemize}
	\end{theorem}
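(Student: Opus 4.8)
The plan is to reduce the lower bound on $\mathbb{P}(\tau<\infty)$ to the law of a perpetual exponential functional of a genuine Brownian motion, and then to read off the density $h_i$ from the classical Dufresne--Yor identity for exponential functionals. I would prove Case 1 in full and obtain Case 2 by repeating the argument with the data $(\tau_2^\ast,N,a_1)$ in place of $(\tau_1^\ast,U,a)$.

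First I would use the upper bound already established in Theorem \ref{thm4.1}. In Case 1 we have $\tau\le\tau_1^\ast$ with $\tau_1^\ast$ given in \eqref{ST1}, so $\{\tau_1^\ast<\infty\}\subseteq\{\tau<\infty\}$ and $\mathbb{P}(\tau<\infty)\ge\mathbb{P}(\tau_1^\ast<\infty)$. Since $\rho_1=\rho_2=\rho$, the defining integral in \eqref{ST1} becomes $\int_0^t\exp\{\rho M(s)-as\}\,ds$ with $M(s)=W(s)+B^H(s)$ and $a=\beta(\lambda-\gamma+k^2)$. The integrand is positive, so the integral is nondecreasing in $t$ and $\tau_1^\ast<\infty$ exactly when the perpetual integral $\int_0^\infty\exp\{\rho M(s)-as\}\,ds$ reaches the level $U=2^\beta\beta^{-1}E^{-\beta}(0)$. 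By continuity of measure from below, $\mathbb{P}(\tau_1^\ast<\infty)=\lim_{T\to\infty}\mathbb{P}\left(\int_0^T\exp\{\rho M(s)-as\}\,ds\ge U\right)$.

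Next I would transfer the computation to a standard Brownian motion. For $\tfrac34<H<1$ the process $M=W+B^H$ is equivalent to a Brownian motion $\widetilde B$ in the sense of equality of the laws on $(C[0,T],\mathcal{B})$ by \cite[Theorem 1.7]{cher2001}. Each finite-horizon event $\{\int_0^T\exp\{\rho M(s)-as\}\,ds\ge U\}$ is a cylinder-measurable functional of the path $M|_{[0,T]}$, hence its probability is unchanged when $M$ is replaced by $\widetilde B$; letting $T\to\infty$ gives $\mathbb{P}(\tau_1^\ast<\infty)=\mathbb{P}\left(\int_0^\infty\exp\{\rho\widetilde B(s)-as\}\,ds\ge U\right)$, exactly as in \cite[Subsection 4.2]{doz2023}. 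Provided $a>0$, the perpetual functional converges almost surely (the drift forces $\rho\widetilde B(s)-as\to-\infty$), and the Dufresne--Yor identity for the exponential functional of a drifted Brownian motion — the same identity underlying the Gamma law used in \cite{li} — shows that its reciprocal is a scalar multiple of a Gamma variable of the form \eqref{abc2}. Matching the parameters $(\alpha_1,\delta_1)$ to $(a,\rho)$ identifies the density of $\int_0^\infty\exp\{\rho\widetilde B(s)-as\}\,ds$ as $h_1$, and the crossing event $\{\,\cdot\,\ge U\}$ becomes, after the corresponding change of variable, the tail $\int_{\rho^2U/2}^\infty h_1(y)\,dy$, which is the asserted bound.

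Finally, Case 2 follows verbatim with $\tau\le\tau_2^\ast$ from Theorem \ref{thm4.1}, the threshold $N$ and $a_1=\beta_2(\lambda-\gamma+k^2)$ replacing $U$ and $a$, yielding the density $h_2$ and the tail $\int_{\rho^2N/2}^\infty h_2(y)\,dy$. I expect the main obstacle to be the transfer step: \cite{cher2001} gives an equivalence of the laws of $M$ and $\widetilde B$ only on finite intervals, so one must argue carefully that the quantities considered are genuine finite-horizon cylinder functionals, that the passage $T\to\infty$ commutes with this identification, and that the drift is strictly dissipative ($a>0$, respectively $a_1>0$) so that the perpetual integral is finite and the Dufresne--Yor law is applicable.
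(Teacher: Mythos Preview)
Your proposal is correct and follows essentially the same route as the paper: both invoke $\tau\le\tau_i^\ast$ from Theorem \ref{thm4.1}, rewrite the event $\{\tau_i^\ast<\infty\}$ in terms of the perpetual functional $\int_0^\infty e^{\rho M(s)-as}\,ds$ with $M=W+B^H$, replace $M$ by a Brownian motion $\widetilde B$ via Cheridito's result \cite{cher2001}, and then apply Yor's identity $\int_0^\infty e^{2\widetilde B_t^{(\nu)}}\,dt\overset{\text{law}}{=}(2Z_\nu)^{-1}$ (after the Brownian scaling $t=\rho^2 s/4$, $\nu=2a/\rho^2$) to extract the density $h_i$. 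The paper spells out the scaling change of variables a bit more explicitly, while you are more careful about the finite-versus-infinite horizon transfer; otherwise the arguments coincide.
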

	\begin{proof}
	\textbf{Case 1:} If $\beta_{1}=\beta_{2}=\beta,$(say), then by case 1 of Theorem \ref{thm4.1}, we have $\tau \leq \tau_{1}^{\ast},$ where $\tau_{1}^{\ast}$ is given in \eqref{ST1}. By definition of $\tau_{1}^{\ast},$ we have
	\begin{align}
	\mathbb{P}(\tau^{\ast}_{1}=\infty)&=\mathbb{P}\left(\int_{0}^{t} \exp\{\rho W(s)+\rho B^{H}(s)-as\} ds < 2^{\beta} {\beta}^{-1}E^{-\beta}(0),\ \mbox{for all}\ t \geq 0 \right) \nonumber\\
	&=\mathbb{P}\left(\int_{0}^{\infty} \exp\{\rho W(s)+\rho B^{H}(s)-as\} ds \leq 2^{\beta} {\beta}^{-1}E^{-\beta}(0) \right) \nonumber\\
	&=\mathbb{P}\left(\int_{0}^{\infty} \exp\{\rho \widetilde{B}(s)-as\} ds \leq 2^{\beta} {\beta}^{-1}E^{-\beta}(0) \right) \nonumber\\
	&=\mathbb{P}\left(\int_{0}^{\infty} \exp \left\lbrace 2 \widetilde{B}\left( \frac{\rho^{2}s}{4}\right) -as\right\rbrace  ds \leq 2^{\beta} {\beta}^{-1}E^{-\beta}(0) \right). \nonumber
	\end{align}
 By performing the transformation $t=\frac{\rho^2s}{4}$ and setting $\nu = \frac{2a}{\rho^{2}}$,  we get
	\begin{align}
		\mathbb{P}\{ \tau<\infty\} \geq \mathbb{P}\{ \tau^{\ast }_{1}<\infty\}=1-\mathbb{P}\{ \tau^{\ast }_{1}=\infty\}
		&=\mathbb{P}\Bigg(\frac{4}{\rho^{2}} \int_{0}^{\infty} \exp\{2 \widetilde{B}_{t}^{(\nu)}\}dt > 2^{\beta} {\beta}^{-1}E^{-\beta}(0) \Bigg),\nonumber
	\end{align}
	where $\widetilde{B}_{s}^{(\nu)}:=\widetilde{B}(s)-\nu s$ .
	It follows from \cite[Chapter 6, Corollary 1.2]{yor2001} that 
	\begin{equation}{\label{ex1}}
		\int_{0}^{\infty} \exp\{ 2\widetilde{B}_{t}^{(\nu)}\}dt \overset{Law}{=} \frac{1}{2Z_{\nu}},
	\end{equation}
	where $Z_{\nu}$ is a Gamma random variable with parameter $\nu,$ that is $\mathbb{P} (Z_{\nu} \in dy)= \displaystyle\frac{1}{\Gamma(\nu)}e^{-y} y^{\nu-1} dy.$ Therefore, we get 
	\begin{align}
		\mathbb{P}\{ \tau<\infty\}\geq \int^{\infty}_{\frac{\rho^{2}2^{\beta-1} E^{-\beta}(0)}{\beta}} h_{1}(y)dy, \nonumber 
	\end{align}
	where 
	\begin{align}
		h_{1}(y)=\frac{(2a/(\rho^{2}y))^{(2a /\rho^{2})}}{y \Gamma(2 a /\rho^{2})} \exp\left(-\frac{2a}{\rho^{2}y}\right).  \nonumber 
	\end{align}
	\noindent\textbf{{Case 2.}} If $\beta_{1}>\beta_{2}$, then by case 2 of Theorem \ref{thm4.1}, we have $\tau \leq \tau_{2}^{\ast},$ where $\tau_{2}^{\ast}$ is given in \eqref{ST2}. By the definition of $\tau_{2}^{\ast},$ we have
	\begin{align}
	&\mathbb{P}(\tau^{\ast}_{1}=\infty)\nonumber\\&=\mathbb{P}\left(\int_{0}^{t} \exp\{\rho \widetilde{B}(s)-a_{1}s\} ds <\left[\beta_{2} E^{\beta_{2}}(0)\left(\frac{\epsilon_0}{2^{1+\beta_{2}}}-\frac{ \epsilon_{0}^{\frac{1+\beta_{1}}{\beta_{1}-\beta_{2}}}D_{1}}{E^{1+\beta_{2}}(0)}\right)\right]^{-1} ,\ \mbox{for all}\ t \geq 0 \right) \nonumber\\
	&=\mathbb{P}\left(\int_{0}^{\infty} \exp\{\rho \widetilde{B}(s)-a_{1}s\} ds <\left[\beta_{2} E^{\beta_{2}}(0)\left(\frac{\epsilon_0}{2^{1+\beta_{2}}}-\frac{ \epsilon_{0}^{\frac{1+\beta_{1}}{\beta_{1}-\beta_{2}}}D_{1}}{E^{1+\beta_{2}}(0)}\right)\right]^{-1} \right). \nonumber
	\end{align}
	By using the procedure as in the case 1 of Theorem \ref{thm6.6}, we have
    \begin{align}
	\mathbb{P}\{ \tau<\infty\} &\geq \mathbb{P}\{ \tau^{\ast }_{2}<\infty\}=1-\mathbb{P}\{ \tau^{\ast }_{2}=\infty\}\nonumber\\
	&=\mathbb{P}\Bigg(\frac{4}{\rho^{2}} \int_{0}^{\infty} \exp\{2 \widetilde{B}_{t}^{(\nu_{1})}\}dt > \left[\beta_{2} E^{\beta_{2}}(0)\left(\frac{\epsilon_0}{2^{1+\beta_{2}}}-\frac{ \epsilon_{0}^{\frac{1+\beta_{1}}{\beta_{1}-\beta_{2}}}D_{1}}{E^{1+\beta_{2}}(0)}\right)\right]^{-1} \Bigg),\nonumber
    \end{align}
    where $\nu_{1}=\displaystyle\frac{2a_{1}}{\rho^{2}}$. Therefore from \eqref{ex1}, we obtain
    \begin{align}
    	\mathbb{P}\{ \tau<\infty\}\geq \int^{\infty}_{\frac{\rho^{2} N}{2}} h_{2}(y)dy, \nonumber 
    \end{align}
    where 
    \begin{align}
    	h_{2}(y)=\frac{(2a_{1}/\rho^{2}y)^{(2a_{1} /\rho^{2})}}{y \Gamma(2 a_{1} /\rho^{2})} \exp\left(-\frac{2a_{1}}{\rho^{2}y}\right),  \nonumber 
    \end{align}
    which completes the proof.
	\end{proof}	

     \medskip\noindent
     {\bf Acknowledgements:} The first author is supported by the University Research Fellowship of Periyar University, India. M. T. Mohan would  like to thank the Department of Science and Technology (DST), India for Innovation in Science Pursuit for Inspired Research (INSPIRE) Faculty Award (IFA17-MA110). The third author is supported by the Fund for Improvement of Science and Technology Infrastructure (FIST) of DST, New Delhi (SR/FST/MSI-115/2016).
      The authors sincerely would like to thank the associate editor and anonymous reviewers for their valuable comments and suggestions, which helped us to improve the quality of this manuscript significantly.\\
     
     \noindent {\bf Data availability:} Data sharing not applicable to this article as no datasets were generated or analysed during the current study.\\

     \noindent {\bf Disclosure statement:} No potential competing interest was reported by the authors.

%	\section*{References}

\end{document}